\documentclass[10.5pt]{article}

\usepackage{latexsym}
\usepackage{amssymb}
\usepackage{amsthm}
\usepackage{amsmath}
\usepackage{color}
\usepackage{tikz}
\usepackage{mathrsfs}
\usepackage{titlesec}
\usepackage{tikz-cd}
\usepackage{enumitem}
\usepackage{array}
\usepackage{stmaryrd}
\usepackage{MnSymbol}
\usepackage{mathtools}
\usepackage{amsmath,calligra,mathrsfs}
\usepackage{adjustbox}
\usepackage[hang,flushmargin]{footmisc}

\DeclareMathAlphabet\mathbfcal{OMS}{cmsy}{b}{n}

\usepgflibrary{arrows}

\newtheorem{theorem}{Theorem}[section]
\newtheorem{corollary}[theorem]{Corollary}
\newtheorem{lemma}[theorem]{Lemma}
\newtheorem{proposition}[theorem]{Proposition}

\newtheorem{definition}[theorem]{Definition}
\newtheorem{remark}[theorem]{Remark}
\newtheorem{example}[theorem]{Example}

\newtheorem{notation}[theorem]{Notation}
\newtheorem{situation}[theorem]{Situation}

\titleformat*{\section}{\large\bfseries}
\titleformat*{\subsection}{\normalsize\bfseries}

\setlength{\evensidemargin}{1in}
\addtolength{\evensidemargin}{-1in}
\setlength{\oddsidemargin}{1in}
\addtolength{\oddsidemargin}{-1in}
\setlength{\topmargin}{1in}
\addtolength{\topmargin}{-1.5in}

\setlength{\textwidth}{16cm}
\setlength{\textheight}{23cm}

\newcommand{\sHom}{\mathscr{H}\text{\kern -3pt {\calligra\large om}}\,}
\newcommand{\CC}{\mathbb{C}}
\newcommand{\QQ}{\mathbb{Q}}
\newcommand{\ZZ}{\mathbb{Z}}

\newcommand{\cQ}{\mathcal{Q}}

\newcommand{\cA}{\mathcal{A}}

\newcommand{\cG}{\mathcal{G}}

\newcommand{\cS}{\mathcal{S}}
\newcommand{\cO}{\mathcal{O}}
\newcommand{\cX}{\mathcal{X}}
\newcommand{\cT}{\mathcal{T}}
\newcommand{\cH}{\mathcal{H}}

\newcommand{\cY}{\mathcal{Y}}

\newcommand{\SL}{\mathrm{SL}}
\newcommand{\cF}{\mathcal{F}}
\newcommand{\bG}{\mathbb{G}}
\newcommand{\tbf}{\textbf}
\newcommand{\bP}{\mathbb{P}}

\newcommand{\cE}{\mathcal{E}}

\newcommand{\mrm}{\mathrm}

\newcommand{\mbf}{\mathbf}
\newcommand{\Hom}{\mrm{Hom}}

\newcommand{\Gr}{\mrm{Gr}}

\newcommand{\ttilde}{\widetilde}
\newcommand{\hhat}{\widehat}
\newcommand{\bs}{\backslash}
\newcommand{\cB}{\mathcal{B}}
\newcommand{\rk}{\mrm{rk}\,}
\newcommand{\scr}{\mathscr}
\newcommand{\mf}{\mathfrak}
\newcommand{\ul}{\underline}
\newcommand{\Sym}{\mrm{Sym}}
\newcommand{\bbar}{\overline}

\newcommand{\GL}{\mrm{GL}}
\newcommand{\mbb}{\mathbb}
\newcommand{\cL}{\mathcal{L}}
\newcommand{\cM}{\mathcal{M}}

\newcommand{\ord}{\mrm{ord}\,}

\newcommand{\Coh}{\mrm{Coh}}
\newcommand{\pr}{\mrm{pr}}
\newcommand{\Bun}{\mrm{Bun}}
\newcommand{\Spec}{\mrm{Spec}\,}
\newcommand{\Isom}{\mrm{Isom}}

\newcommand{\cD}{\mathcal{D}}
\newcommand{\cN}{\mathcal{N}}

\newcommand{\mH}{\mrm{H}}
\newcommand{\SB}{\mrm{SB}}

\newcommand{\Bh}{\mrm{Bh}}
\newcommand{\SO}{\mrm{SO}}
\newcommand{\diag}{\mrm{diag}}
\newcommand{\End}{\mrm{End}}
\newcommand{\im}{\mrm{im}}

\newcommand{\mA}{\mbb{A}}

\newcommand{\Sp}{\mrm{Sp}}

\newcommand{\Proj}{\mrm{Proj}\,}

\newcommand{\PGL}{\mrm{PGL}}

\newcommand{\mP}{\mrm{P}}

\newcommand{\Parbun}{\mrm{Parbun}}
\newcommand{\Quot}{\mrm{Quot}}

\begin{document}

\begin{center}\tbf{\textsc{COMPACTIFICATIONS OF MODULI OF\\$G$-BUNDLES AND CONFORMAL BLOCKS}}\end{center}

\begin{center}Avery Wilson\end{center}
\begin{center} Department of Mathematics \\ University of North Carolina \\ Chapel Hill, NC 27514 USA\\ avwi1407@live.unc.edu\end{center}
\medskip

\begin{center}\begin{tabular}{m{13cm}}\tbf{Abstract.} For a stable curve of genus $g\geq 2$ and simple Lie algebra of type A or C, we show that the conformal blocks algebra $\cA$ on $\bbar{\cM}_g$ is finitely generated and establish an explicit connection to Schmitt and Mu\~{n}oz-Casta\~{n}eda's compactification of the moduli space of $G$-bundles.\end{tabular}\end{center}

\section{Introduction}

Let $C_0$ be a stable curve over $\CC$ of genus $g\geq 2$, and $G$ a simple, simply connected algebraic group. When $C_0$ is smooth, Ramanathan defined a notion of semistability for $G$-bundles on $C_0$ and constructed a projective moduli space $\mf{M}_{G}(C_0)$ of semistable $G$-bundles \cite{ramaphd}. These moduli spaces fit together into a flat, projective family $\mf{M}_G\to\cM_g$ over the stack of smooth genus $g$ curves. But, the moduli space of $G$-bundles over a singular curve is no longer projective, just as in the case of vector bundles, where one needs to include torsion-free sheaves in order to obtain compactifications. 

The problem of compactifying moduli of vector bundles on singular curves has a long and rich history and has been studied extensively, for example in  \cite{seshadrivb}, \cite{nagarajseshadriI}, \cite{nagarajseshadriII}, or \cite{pandharipande} (see \cite{belgib}, section 11, for a brief historical account). On the other hand, there is no agreed-upon compactification of the moduli of $G$-bundles for singular curves, although there are several out there. For example, Sun constructed a compactification for $G=\SL(r)$ in \cite{sun1} using certain classes of torsion-free sheaves, and in \cite{faltings} Faltings constructed compactifications for orthogonal and symplectic groups using torsion-free sheaves with a symmetric or alternating bilinear form. 

Ultimately, one would like to construct, for any $G$, a stack $\bbar{\mf{M}}_G\to\bbar{\cM}_g$ over the stack of stable curves that satisfies:
\begin{enumerate}
\item the morphism $\bbar{\mf{M}}_G\to\bbar{\cM}_g$ is flat and projective;
\item $\bbar{\mf{M}}_G$ corepresents a moduli functor that generalizes the notion of semistable $G$-bundles;
\item the moduli space of semistable $G$-bundles forms a dense open substack of $\bbar{\mf{M}}_G$.
\end{enumerate}
So far, no such compactification is known, but there are several very interesting and useful compactifications at hand.

In this paper, we will focus on one particular compactification, which is the moduli space of ``singular $G$-bundles" introduced by Schmitt (\cite{schspgb}, \cite{schbh}). Fixing a faithful representation $G\subset\GL(V)$, a singular $G$-bundle on $C_0$ consists of a uniform rank $r=\dim V$ torsion-free sheaf $\cE$ and a section
$$\hhat{\tau}:C_0\to\ul{\Hom}(\cE,V\otimes\cO_{C_0})\sslash G,$$
which should be thought of as a degenerate version of a reduction of structure group from $\GL(V)$ to $G$. Every $G$-bundle $E$ gives a singular $G$-bundle $(\cE,\hhat{\tau})$, where $\cE=E\times^GV$ and $\hhat{\tau}$ is the natural reduction of structure group. Conversely, if $\cE$ is a locally free sheaf and $\hhat{\tau}(C_0)\subset\ul{\Isom}(\cE,V\otimes\cO_{C_0})/G$, then we get a $G$-bundle as the pullback
\[\begin{tikzcd}
E\arrow{r}\arrow{d}&\ul{\Isom}(\cE,V\otimes\cO_{C_0})\arrow{d}\\
C_0\arrow{r}{\hhat{\tau}}&\ul{\Isom}(\cE,V\otimes\cO_{C_0})/G.\end{tikzcd}\]
A singular $G$-bundle which gives a $G$-bundle over a dense open subset of $C_0$ is called ``honest." 

In \cite{schmc}, Schmitt and Mu\~{n}oz-Casta\~{n}eda defined a notion of semistability for honest singular $G$-bundles and proved the existence of a projective moduli space of semistable honest singular $G$-bundles. Moreover, they showed that if $V$ has a $G$-invariant, nondegenerate bilinear form, then an honest singular $G$-bundle gives a $G$-bundle over the whole smooth locus of $C_0$ (loc. cit. theorem 3.5). There is a relative moduli space of singular $G$-bundles over $\bbar{\cM}_g$ constructed in \cite{mcuniformity}, and, although it does not necessarily meet the criteria for a compactification we described above, it serves as a very useful starting point.

However, there is another very natural candidate for a compactification, which comes from conformal blocks and is of geometric interest, but lacks a modular interpretation. For a $G$-module $V$, let $\cD(V)$ be the line bundle on the stack of $G$-bundles $\Bun_G(C_0)$, whose fiber over a $G$-bundle $E$ is the determinant of cohomology
$$\det\mH^0(C_0,E\times^GV)^*\otimes\det\mH^1(C_0,E\times^GV)$$
of the vector bundle $E\times^GV$. Let $A(C_0)$ be the section ring
$$A(C_0)=\bigoplus_{m\geq 0}\mH^0(\Bun_G(C_0),\cD(V)^m),$$
which has connections to the algebra of conformal blocks (\cite{belgib}, section 9). Specifically, if $d_V$ is the Dynkin index of $V$ (see \cite{knr}) and $\mf{g}$ the Lie algebra of $G$, then we have that
\begin{equation}\label{veronese}A(C_0)=\bigoplus_{m\geq 0}\mbb{V}_{\mf{g},md_V}^*\end{equation}
is the $d_V$-th Veronese subalgebra of the conformal blocks algebra. By the work of several authors (\cite{faltingsverlinde}, \cite{beauville-laszlo}, \cite{knr}, \cite{laszlo-sorger}), when $C$ is a smooth curve there is an ample line bundle $\Theta(V)$ on Ramanathan's moduli space $\mf{M}_G(C)$ and an algebra isomorphism
$$\bigoplus_{m\geq 0}\mH^0(\mf{M}_G(C),\Theta(V)^m)\cong\bigoplus_{m\geq 0}\mH^0(\Bun_G(C),\cD(V)^m).$$
In particular $\mf{M}_G(C)\cong\Proj A(C)$. 

Letting $\cA=\bigoplus_{m\geq 0} p_*\cD(V)^m$ for the relative stack of $G$-bundles $p:\Bun_G\to\bbar{\cM}_g$, the stack $\Proj\cA$ seems like quite a nice compactification of $\mf{M}_G$ -- it is flat over $\bbar{\cM}_g$ with normal integral fibers and agrees with Ramanathan's moduli space over the locus of smooth curves $\cM_g$. However, we have to show that $\cA$ is a finitely generated algebra in order to consider $\Proj\cA$ a compactification. Finite generation was proved by Belkale-Gibney in the case $G=\SL(r)$ (\cite{belgib}), and Moon-Yoo generalized this to parabolic $\SL(r)$-bundles on pointed curves in \cite{moonyoo}. Note that one does not necessarily expect finite generation for these algebras -- the fact that the spaces $\mH^0(\Bun_G(C_0),\cD(V)^m)$ are even finite-dimensional is already surprising, since we only know this through the connection to conformal blocks (finite-dimensionality of conformal blocks is proven in \cite{tuy}). 

Our aim is thus two-fold: a) prove finite generation of $\cA$ for other groups $G$, and b) relate $\cA$ to the moduli of singular $G$-bundles in hope of finding a modular interpretation for $\Proj\cA$. So far, we have succeeded only in the case of type A or C Lie groups. Our main results for these groups are as follows.

\begin{theorem}\label{mainthm} Let $V$ be a finite-dimensional representation of a simple, simply connected Lie group of type A or C. Then for any stable curve $C_0$ of genus $g\geq 2$ the algebra
\begin{equation}\label{mainthmeqtn}A(C_0)=\bigoplus_{m\geq 0}\mH^0(\Bun_G(C_0),\cD(V)^m)\end{equation}
is finitely generated.\end{theorem}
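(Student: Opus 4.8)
The plan is to identify $A(C_0)$, after passing to a Veronese subalgebra, with the section ring of the Schmitt--Mu\~{n}oz-Casta\~{n}eda moduli space of semistable honest singular $G$-bundles, and then to read off finite generation from the projectivity of that space. First I would reduce to one well-chosen representation: since $G$ is simple, simply connected of type A or C, it is $\SL(V_0)$ or $\Sp(V_0)$ for the standard representation $V_0$, which is faithful, of Dynkin index $1$, and in type C self-dual. By the theory of the determinant-of-cohomology line bundle on $\Bun_G(C_0)$ (\cite{knr}), one has $\cD(V)\cong\cD(V_0)^{\otimes d_V}$ for all $V$, with $d_V\geq 1$ unless $V$ is trivial (in which case $A(C_0)=\CC[t]$ trivially). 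Hence $A(C_0)$ is the $d_V$-th Veronese subalgebra of $R:=\bigoplus_{m\geq 0}\mH^0(\Bun_G(C_0),\cD(V_0)^{\otimes m})$, whose graded pieces are finite-dimensional by the identification \eqref{veronese} with spaces of conformal blocks. Since a Veronese subalgebra of a finitely generated graded algebra is finitely generated, it suffices to prove that $R$ is finitely generated.

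\textbf{Comparison with the Schmitt space.} Let $\SB:=\SB_G(C_0)$ denote the projective moduli space of \cite{schmc} attached to $G\subset\GL(V_0)$, and let $\Theta$ be the line bundle on $\SB$ produced from the universal uniform torsion-free sheaf by the same determinant-of-cohomology recipe defining $\cD(V_0)$. I would then carry out three steps. (i) Show that a positive power of $\Theta$ is ample: since $\SB$ is a good quotient of a quasi-projective parameter scheme by a reductive group and $\Theta$ pulls back, up to a positive power, to the GIT polarization, $\Theta$ is semiample and $\bigoplus_m\mH^0(\SB,\Theta^{\otimes m})$ is finitely generated. (ii) Let $\Bun_G^{\circ}(C_0)\subseteq\Bun_G(C_0)$ be the open substack of $G$-bundles that define semistable honest singular $G$-bundles and $\SB^{\circ}\subseteq\SB$ the open locus of locally free sheaves; then the classifying map $q\colon\Bun_G^{\circ}(C_0)\to\SB^{\circ}$ is a good moduli space morphism with $q^{*}\Theta\cong\cD(V_0)|_{\Bun_G^{\circ}(C_0)}$ by functoriality, so $q$ induces $\mH^0(\SB^{\circ},\Theta^{\otimes m})\xrightarrow{\sim}\mH^0(\Bun_G^{\circ}(C_0),\cD(V_0)^{\otimes m})$; here one must check that Schmitt's semistability for honest singular $G$-bundles restricts on locally free sheaves to Ramanathan semistability, exactly as in the smooth case. (iii) Show that the restriction maps $\mH^0(\Bun_G(C_0),\cD(V_0)^{\otimes m})\to\mH^0(\Bun_G^{\circ}(C_0),\cD(V_0)^{\otimes m})$ and $\mH^0(\SB,\Theta^{\otimes m})\to\mH^0(\SB^{\circ},\Theta^{\otimes m})$ are isomorphisms: the first because $\Bun_G(C_0)$ is a smooth algebraic stack and, in each quasi-compact open substack, the complement of $\Bun_G^{\circ}(C_0)$ has codimension $\geq 2$ (this is where $g\geq 2$ enters); the second because $\SB$ is normal and the non-locally-free locus $\SB\setminus\SB^{\circ}$ has codimension $\geq 2$ (again using $g\geq 2$, and in type C the self-duality of $V_0$, by which \cite[Theorem 3.5]{schmc} forces honest singular $G$-bundles to be locally free on the smooth locus and constrains the local behaviour at the nodes). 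Chaining (i)--(iii) identifies $R$ with $\bigoplus_m\mH^0(\SB,\Theta^{\otimes m})$, which is finitely generated; hence so is its Veronese subalgebra $A(C_0)$.

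\textbf{Main obstacle.} The hard part will be the codimension-two estimate in step (iii) for the complement of $\Bun_G^{\circ}(C_0)$ over a \emph{nodal} curve. This needs a Harder--Narasimhan-type stratification of $\Bun_G(C_0)$ adapted to the nodes, a bound on the dimensions of the unstable strata (and of the strata meeting the non-locally-free boundary) uniform in the combinatorics of $C_0$, and the reconciliation of Ramanathan semistability with Schmitt's semistability used in step (ii). In type A these estimates are essentially those of Belkale--Gibney and Moon--Yoo (\cite{belgib}, \cite{moonyoo}); the type C case is the genuinely new one, where the self-dual standard representation is what makes the Schmitt space behave well enough. A secondary, more computational issue is the matching in step (i) of $\Theta$ with Schmitt's GIT polarization, i.e.\ a weight computation for the determinant-of-cohomology bundle along the orbits of the parameter scheme.
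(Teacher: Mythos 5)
Your overall architecture (reduce to one representation by the Veronese trick, identify a Veronese piece of $A(C_0)$ with the section ring of an ample line bundle on the Schmitt--Mu\~{n}oz-Casta\~{n}eda space, conclude from projectivity) is the same as the paper's, but the step that carries all the difficulty is wrong as stated. In (iii) you claim that the non-locally-free locus $\SB\setminus\SB^{\circ}$ has codimension $\geq 2$, so that sections extend by normality. In fact the degeneration locus is a \emph{divisor}: in the paper's model via descending $G$-bundles on the normalization $\nu:C\to C_0$, the boundary of $\Bun_G(C)$ in the Bhosle/descending-bundle stack is exactly the divisor where $\det q_1$ vanishes (see example \ref{polebd}), and the same is true downstairs, just as torsion-free non-locally-free sheaves form a divisorial boundary on a nodal curve. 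Consequently no Hartogs-type argument is available, and the genuine content of the proof is an explicit bound on the order of vanishing of a section along this boundary divisor: one lifts a one-parameter degeneration to descending $G$-bundles (proposition \ref{lift}, which is where the invariant bilinear form on $V$ is needed), uses the factorization of $\mH^0(\Bun_G(C_0),\cD^l)$ over the nodes (lemma \ref{lambdadecomp}), and computes the pole order as $\sum_x\bigl(l\cdot\ord\kappa_{x_1}+w_0\lambda_x(\varphi_x)\bigr)$ (proposition \ref{pole}), which is then shown to be nonnegative precisely for types A and C with the representations $W\oplus W^*$ resp.\ the standard symplectic representation (proposition \ref{nopole}). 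That this cannot be soft is shown by the paper's example \ref{polebd}: for types B and D sections genuinely fail to extend, which your codimension-two argument would contradict.

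Two further points would also need repair. First, the moduli space $\SB$ is not known to be normal, so even with a codimension bound you could not extend sections; the paper instead normalizes the GIT ``master space'' $\mbf{M}$ (the normalization of $\bbar{\mbf{Q}^G}$) and works with its quotient. Second, it is not known that $G$-bundles (or locally free honest singular $G$-bundles) are dense in the full Schmitt moduli space, so identifying $\mH^0(\SB,\Theta^m)$ with sections over the bundle locus by restriction is not justified; the paper sidesteps this by replacing $\SB$ with the closure of the $G$-bundle locus, at the cost of a possibly smaller compactification. Finally, in type A your chosen $V_0$ (the standard representation of $\SL(n)$) carries no invariant nondegenerate bilinear form, so proposition \ref{goodv} (Schmitt--Mu\~{n}oz-Casta\~{n}eda, theorem 3.5) is unavailable and the lifting step breaks; this is why the paper takes $V=W\oplus W^*$ in type A, and the resulting pole inequality is then checked for that representation. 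Your steps (i) and (ii) (matching $\Theta$ with the GIT polarization and with $\cD$ on the bundle locus, and the codimension-two argument \emph{inside} $\Bun_G(C_0)$) do correspond to what the paper does in sections \ref{linebundleidentities}--\ref{injsections}.
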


\noindent Our proof is based on Belkale-Gibney's, the main point being to show that $A(C_0)$ is isomorphic (in sufficiently high degree) to the section ring of an ample line bundle on a normalized moduli space of singular $G$-bundles (theorem \ref{isodkl}). Technically this isomorphism is only for certain choices of representation $V$, but the choice of representation does not matter for finite generation by equation (\ref{veronese}). The crux of the proof is a pole calculation that shows sections on $\Bun_G(C_0)$ extend to a (suitably normalized) stack of singular $G$-bundles. We show that there is no pole in type A or C, but there may be for type B, D (example \ref{polebd}). However, this pole is bounded by a number that scales linearly with $m$ (the $m$ appearing in (\ref{mainthmeqtn})), and we believe this should allow theorem \ref{mainthm} to generalize to other groups (see the discussion at the end of example \ref{polebd} -- one still needs to show the new line bundle correcting the pole descends to an ample line bundle on the appropriate moduli space).

In section \ref{conformalblocks}, we explain how the proof of theorem \ref{mainthm} implies finite generation of the sheaf of algebras $\cA$ on $\bbar{\cM}_g$, and taking $\mf{X}=\Proj\cA$ we obtain the following.

\begin{theorem}\label{flatfamily} Let $G$ be a simple, simply connected Lie group of type A or C, and let $g\geq 2$. Then there is a flat, relatively projective family $\mf{X}\to\bbar{\cM}_g$ such that
\begin{enumerate}
\item the fiber over a smooth curve is Ramanathan's moduli space of semistable $G$-bundles;
\item the fiber over an arbitrary curve is a normalized moduli space of semistable honest singular $G$-bundles (for any choice of representation which is as in section \ref{sectnotation}).
\end{enumerate}\end{theorem}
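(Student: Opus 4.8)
The plan is to build $\mf{X}\to\bbar{\cM}_g$ as $\Proj\cA$, where $\cA=\bigoplus_{m\geq 0}p_*\cD(V)^m$ for $p:\Bun_G\to\bbar{\cM}_g$, and to deduce its properties fiber by fiber. The first step is \emph{finite generation of $\cA$ as a sheaf of algebras}: by Theorem \ref{mainthm} each fiber $A(C_0)$ is finitely generated, but one needs this to hold in families, i.e. that $p_*\cD(V)^m$ is a coherent sheaf on $\bbar{\cM}_g$ and that $\cA$ is locally finitely generated. I would cover $\bbar{\cM}_g$ by affine charts, use the flatness of $\Bun_G\to\bbar{\cM}_g$ together with cohomology-and-base-change to get that the formation of $p_*\cD(V)^m$ commutes with base change (the fibers $\mH^0(\Bun_G(C_0),\cD(V)^m)$ being finite-dimensional by the conformal blocks identification (\ref{veronese}) and \cite{tuy}), and then invoke the argument from section \ref{conformalblocks} (promised in the excerpt) that bootstraps the pointwise finite generation of Theorem \ref{mainthm} to generation of $\cA$ in bounded degree over the base. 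This is the step I expect to be the main obstacle, since finite generation is not a generic-then-spread-out phenomenon: one must control the \emph{degrees} of generators and relations uniformly over $\bbar{\cM}_g$, which is exactly what the pole calculation behind Theorem \ref{mainthm} is designed to do (the pole bound scaling linearly in $m$ gives the required uniform degree bound).

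Granting finite generation, set $\mf{X}=\Proj\cA$ with the natural map $\mf{X}\to\bbar{\cM}_g$. Relative projectivity is immediate from $\cA$ being a finitely generated, graded $\cO_{\bbar{\cM}_g}$-algebra generated in degree one after passing to a Veronese (allowed by (\ref{veronese})). For \emph{flatness}, I would argue as in Belkale--Gibney: the total space $\Proj\cA$ is integral and normal (the fibers are, by the construction via the normalized moduli space of singular $G$-bundles in Theorem \ref{isodkl}), it dominates the smooth base $\bbar{\cM}_g$, and all fibers have the same dimension; hence the map is flat by the standard criterion (a dominant morphism from a Cohen--Macaulay — here normal with equidimensional fibers — scheme to a regular scheme with equidimensional fibers is flat). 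One must check the fibers really do have constant dimension: over $\cM_g$ this is $\dim\mf{M}_G(C)$, and over the boundary one uses that the normalized moduli space of semistable honest singular $G$-bundles has the same dimension, which follows from Schmitt--Mu\~noz-Casta\~neda's construction in \cite{schmc} together with the dimension count for torsion-free sheaves.

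The fiber identifications are then read off from base change. Over a smooth curve $C$, base change gives the fiber $\Proj A(C)$, and the excerpt already records the classical isomorphism $\mf{M}_G(C)\cong\Proj A(C)$ from \cite{faltingsverlinde,beauville-laszlo,knr,laszlo-sorger}, giving part (1). Over an arbitrary stable curve $C_0$, base change gives $\Proj A(C_0)$, and Theorem \ref{isodkl} identifies $A(C_0)$ (in high degree, hence with the same $\Proj$) with the section ring of an ample line bundle on a normalized moduli space of semistable honest singular $G$-bundles attached to the chosen representation $V$ as in section \ref{sectnotation}; since Theorem \ref{mainthm} and (\ref{veronese}) make the conclusion independent of $V$, this gives part (2). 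The only subtlety here is to make sure the base change statement $p_*\cD(V)^m\otimes k(C_0)\xrightarrow{\sim}\mH^0(\Bun_G(C_0),\cD(V)^m)$ holds for all $m$ in the relevant range and not just generically; this again rests on the coherence and flatness input from the first step, so once that obstacle is cleared the remaining assembly is formal.
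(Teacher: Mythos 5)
Your overall frame (take $\mf{X}=\Proj\cA$ and identify fibers by base change) is the paper's, but the step you yourself single out as the main obstacle is exactly where your proposal has a genuine gap, and the mechanism you propose for it is not the one that works. The bootstrap from the pointwise finite generation of Theorem \ref{mainthm} to finite generation of the sheaf $\cA$ is \emph{not} obtained from the pole calculation ``scaling linearly in $m$'': in types A and C the pole calculation shows there is \emph{no} pole (the linear bound is only the remark about types B, D), and in any case a section-extension statement does not by itself give a uniform bound on the degrees of algebra generators across $\bbar{\cM}_g$. What the paper actually does is redo the whole GIT construction \emph{relatively}: for a family $\scr{C}\to T$ it builds relative parameter spaces $\mbf{Q}_{\scr{C}/T}$, $\mbf{M}_{\scr{C}/T}$ with a relatively ample $L_m(k_1,k_2)$, checks that $\delta_0$, $m$, $n$ can be chosen uniformly over $T$ (citing \cite{mcthesis}, \cite{mcuniformity}, and Simpson's fiberwise-semistability lemma), handles the failure of normalization to commute with base change by the generic-normality lemma of Belkale--Gibney (lemma 10.3) together with Noetherian induction on $\dim T$, and then identifies $\cB=(\bigoplus_k q_*L^k)^{\SL(W)}$ with a Veronese $\cA^{(N)}$ using the fiberwise isomorphism of Theorem \ref{isodkl}; finite generation of $\cB$ comes from relative GIT, and \cite{belgib} lemma 8.4 plus Nakayama give the uniform degree bound for $\cA$. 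None of this relative machinery (uniform GIT parameters, the generic-normality/induction device) appears in your outline, and appealing to ``the argument from section \ref{conformalblocks}'' is circular for a blind proof, so the central step is missing.

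Two further points where your route diverges from the paper and is shakier. First, coherence and base-change compatibility of $p_*\cD(V)^m$ are not obtained by cohomology-and-base-change on $\Bun_G$ (which is not quasi-compact, so the usual theorems do not apply); the paper gets both for free from the identification $p_*\cL_{G,l}\cong\mbb{V}_{\mf{g},l}^\vee$ with the conformal blocks \emph{vector bundles} of \cite{tuy}, \cite{fakh12}, \cite{belfakh}. Second, your flatness argument is both unnecessary and incorrect as stated: miracle flatness needs the source Cohen--Macaulay, and ``normal with equidimensional fibers'' is not a substitute (normal $\not\Rightarrow$ CM in dimension $\geq 3$), while the asserted constancy of fiber dimension for the normalized moduli of honest singular $G$-bundles is an unproven claim. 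In the paper flatness is immediate: each graded piece of $\cA$ is locally free, hence $\cA$ is flat and commutes with base change degree by degree, so $\Proj\cA\to\bbar{\cM}_g$ is flat and its fibers are $\Proj A(C_0)$, identified with Ramanathan's space over smooth curves by \cite{knr} and with the normalized moduli space $\cX(C_0)$ over arbitrary stable curves by Theorem \ref{isodkl}.
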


\noindent Please note that, in item 2., the moduli space we deal with is possibly a bit smaller than usual, as we are only able to include the singular $G$-bundles which are in the closure of $\Bun_G(C_0)$. It seems important to determine whether $\Bun_G(C_0)$ is dense in the stack of honest singular $G$-bundles (for appropriate choices of representation), but we are so far unable to do so. Also we should clarify that our moduli spaces are not explicitly the normalizations of the usual moduli spaces of singular $G$-bundles, but rather we normalize the ``master space" of which the usual moduli space is a GIT quotient. See section \ref{setupforfg} for the exact definition of the moduli spaces.

Since normalizations do not behave well with base change, item 2. of theorem \ref{flatfamily} only applies to fibers over fixed stable curves, i.e. we are not able to give a modular interpretation of $\mf{X}=\Proj\cA$ over a \emph{family} of stable curves. This would be an interesting problem to resolve in future work. There are other approaches to compactification that perhaps one could use, for example in \cite{balaji} Balaji recently constructed a Gieseker-type moduli space over $\Spec\CC[[t]]$ that provides a relative compactification compatible with a degeneration of a smooth curve into an irreducible curve with one node (see also \cite{martens-thaddeus}, \cite{psolis}). It would be interesting to see how his moduli spaces relate to conformal blocks. It is also possible that $\Proj\cA$ has no true modular interpretation, for example the Satake compactification $A_g^*$ of the moduli space $A_g$ of principally polarized abelian varieties of dimension $g\geq 2$ (\cite{baily}, \cite{faltingschai}) has no known modular interpretation. However, $A_g^*$ is known to be a ``minimal" compactification, in the sense that any other smooth toroidal compactification maps canonically to $A_g^*$ (\cite{faltingschai}, theorem 2.3). We wonder if the conformal blocks compactification $\Proj\cA$ has such a property.

\subsection{Organization of the paper}

Section \ref{background} is background on torsion-free sheaves, singular $G$-bundles, and their semistability. We show that the definition of semistability is independent of the choice of polarization (proposition \ref{lind}).

The rest of the paper is divided into three parts. Section \ref{surjection} is done at the stack level and is devoted to showing that sections of $\cD(V)$ over $\Bun_G(C_0)$ extend to the normalization of the stack of honest singular $G$-bundles (technically, we only deal with the closure of $\Bun_G(C_0)$ in this stack). For this, we have to show that sections on $\Bun_G(C_0)$ extend over a one-parameter family of $G$-bundles degenerating into an honest singular $G$-bundle. We show that any such family can be lifted to a family of descending $G$-bundles on the normalization of $C_0$ (sects. \ref{bhbundles}, \ref{liftsubsection}), and using a factorization of $\mH^0(\Bun_G(C_0),\cD^l)$ (lemma \ref{lambdadecomp}) we are able to show that sections extend by way of an explicit pole calculation (sect. \ref{polesubsection}).

Sections \ref{part2} and \ref{proofmainthm} deal with the actual moduli spaces and go through the geometric invariant theory setup necessary to prove theorem \ref{mainthm}. We outline the construction of a polarized, normalized moduli space $(\cX,L)$ of semistable honest singular $G$-bundles (sects. \ref{paramspaces}-\ref{setupforfg}) due to Schmitt and Mu\~{n}oz-Casta\~{n}eda \cite{schmc}. After identifying $L$ (sect. \ref{linebundleidentities}), we establish an injection $\mH^0(\cX,L)\hookrightarrow\mH^0(\Bun_G(C_0),\cD^l)$ (section \ref{injsections}), which is an isomorphism by the above section extension property.

In the last section (sect. \ref{conformalblocks}), we show how this implies finite generation of $\cA$ and prove theorem \ref{flatfamily}, as well as discuss the connection to conformal blocks.\\

\noindent\tbf{Acknowledgements.} Thanks to P. Belkale, N. Fakhruddin, A. Gibney, S. Kumar, A. Mu\~{n}oz-Casta\~{n}eda, and A. Schmitt for useful comments and suggestions.

\section{Background}\label{background}

\subsection{Preliminaries}

A coherent sheaf $\cE$ on a Noetherian scheme $X$ is \emph{torsion-free} if every nonzero coherent subsheaf $\cE'\subseteq\cE$ is supported in dimension $\dim X$. The \emph{torsion subsheaf} $T(\cE)\subseteq\cE$ is the maximal subsheaf supported in dimension $<\dim X$ (this exists and is coherent), and $\cE$ is torsion-free if and only if $T(\cE)=0$. If $X$ is a reduced curve over an algebraically closed field, then $\cE$ is torsion-free if and only if it has one-dimensional support and has depth one at every closed point of its support.

Suppose $X$ is projective over a field, and $\cL$ is an ample line bundle on $X$. The Hilbert polynomial of a coherent sheaf $\cE$ is the polynomial
$$\mP_\cE(n)=\chi(\cE\otimes\cL^n).$$
We may express
$$\mP_\cE(n)=\sum_{k=1}^{\dim\cE}a_k(\cE)\frac{n^k}{k!}$$
for some coefficients $a_k(\cE)\in\ZZ$, and the rank and degree of $\cE$ are defined as
$$\rk\cE=\frac{a_d(\cE)}{a_d(\cO_X)},\;\;\;\;\;\;\;\deg\cE=a_{d-1}(\cE)-\rk\cE\cdot a_{d-1}(\cO_X),$$
where $d=\dim X$ (take $a_k(\cE)=0$ for $k>\dim\cE$). We will say that $\cE$ has \emph{uniform rank $r$} if its restriction to every component of $X$ is rank $r$.

\subsection{Singular $G$-bundles}\label{spbintro}

Let $X$ be a projective variety over $\CC$, $G$ a reductive algebraic group, and $V$ a rank $r$ representation of $G$.

\begin{definition}\label{sgb} A singular $G$-bundle on $X$ is a pair $(\cE,\tau)$ consisting of a uniform rank $r$ torsion-free sheaf $\cE$ and a nontrivial algebra homomorphism
$$\tau:\Sym^*(V\otimes\cE)^G\to\cO_X.$$
Note that $\tau$ is given by a section 
$$\hhat{\tau}:X\to\ul{\Hom}(\cE,V^*\otimes\cO_X)\sslash G:=\ul{\Spec}\Sym^*(V\otimes\cE)^G.$$\end{definition}

Every $G$-bundle $E\to X$ gives a singular $G$-bundle $(\cE,\tau)$, where $\cE=E\times^GV$ and $\hhat{\tau}$ is the natural reduction of structure group
$$X=E/G\to (E\times^G\GL(V))/G=\ul{\Isom}(\cE,V^*\otimes\cO_X)/G.$$
Conversely, given a singular $G$-bundle $(\cE,\tau)$ such that $\cE$ is locally free and $\hhat{\tau}(X)\subseteq\ul{\Isom}(\cE,V^*\otimes\cO_X)/G$, we get back a $G$-bundle $E\to X$ as the pullback
\[\begin{tikzcd}
E\arrow{r}\arrow{d}&\ul{\Isom}(\cE,V^*\otimes\cO_X)\arrow{d}\\
X\arrow{r}{\hhat{\tau}}&\ul{\Isom}(\cE,V^*\otimes\cO_X)/G.\end{tikzcd}\]
Hence, we will use the term ``$G$-bundle" interchangeably for the singular $G$-bundles which give $G$-bundles under the above construction. It will also be important for us to distinguish a certain class of ``honest" singular $G$-bundles, which are very close to being $G$-bundles.

\begin{definition}\label{honestsgb} A singular $G$-bundle $(\cE,\tau)$ is called an honest singular $G$-bundle if there is a dense open subset $U\subseteq X$ with $\cE_U$ locally free and $\hhat{\tau}(U)\subseteq\ul{\Isom}(\cE_U,V^*\otimes\cO_U)/G$.\end{definition}

\noindent Thus, an honest singular $G$-bundle is one which gives a $G$-bundle over a dense open subset of $X$.

\subsection{Semistable singular $G$-bundles on nodal curves}\label{sbonnodalcurves}

Now suppose $X$ is a connected (possibly reducible) nodal curve with ample line bundle $\cL$, and $G$ is semisimple. In \cite{schmc}, Schmitt and Mu\~{n}oz-Casta\~{n}eda defined a notion of semistability for honest singular $G$-bundles on nodal curves and showed that there is a projective moduli space of semistable honest singular $G$-bundles. To give the definition of semistability, we first define a reduction of a singular $G$-bundle to a tuple of one-parameter subgroups. Let $\eta_i=\Spec K_i$, $1\leq i\leq t$, be the generic points of the irreducible components of $X$. 

\begin{definition}\label{redlambda}\emph{(\cite{schmc}, section 3)} Let $(\cE,\tau)$ be an honest singular $G$-bundle and $E\to U$ the induced $G$-bundle over a dense open subset $U\subseteq X$. For a tuple $\vec{\lambda}=(\lambda^1,\dots,\lambda^t)$ of one-parameter subgroups of $G$, define a reduction of $(\cE,\tau)$ to $\vec{\lambda}$ to be a tuple $\vec{s}=(s_1,\dots,s_t)$ of points $s_i\in E_{\eta_i}/P(\lambda^i)_{K_i},$ where 
$$P(\lambda^i)=\{g\in G:\lim\limits_{t\to 0}\lambda^i(t)g\lambda^i(t)^{-1}\text{ exists in }G\}$$ 
is the parabolic subgroup associated to $\lambda^i$.\end{definition}

The semistability condition involves weighted filtrations associated to each reduction of the singular $G$-bundle. A weighted filtration of a sheaf $\cE$ is a pair $(\cE_\bullet,m_\bullet)$ consisting of a filtration by subsheaves
$$0=\cE_0\subset\cE_1\subset\cdots\subset\cE_{q+1}=\cE$$
and a sequence of rational numbers $m_\bullet=(m_1,\dots,m_q).$ Given an honest singular $G$-bundle $(\cE,\tau)$ and a reduction $\vec{s}$ of $(\cE,\tau)$ to $\vec{\lambda}$, we can form a weighted filtration $(\cE_\bullet,m_\bullet)$ as follows. Let $\ttilde{P}(\lambda^i)\subseteq\GL(V)$ be the parabolic subgroup for $\lambda^i$ in $\GL(V)$, and let $\lambda^i_1<\cdots<\lambda^i_{q_i+1}$ be the distinct weights of $\lambda^i$ acting on $V^*$. Via the embedding 
$$E/P(\lambda^i)\hookrightarrow\Isom(\cE_U,V^*\otimes\cO_U)/\ttilde{P}(\lambda^i),$$
the sections $s_i$ give partial flags
$$0=W^i_0\subset W^i_1\subset\cdots\subset W^i_{q_i+1}=W^i$$
in $W^i=\cE_{\eta_i}$ for $1\leq i\leq t$. So we have a weighted flag at each fiber $\cE_{\eta_i}$, and we take the ``direct sum" of these weighted flags to get a weighted flag in $W=\cE_{\eta_1}\oplus\cdots\oplus\cE_{\eta_t}$. This is done as follows. Let
$$\mu_1<\cdots<\mu_{q+1}$$
be the distinct values of the $\lambda^i_j$, and for each $1\leq i\leq t$ and $1\leq j\leq q+1$ define $W_j(i)=W^i_k$, where $k$ is maximal such that $\lambda^i_k\leq\mu_j.$ Then we have a filtration of $W$ given by
$$0=W_0\subset W_1\subset\cdots\subset W_{q+1}=W,$$
where $W_j=W_j(1)\oplus\cdots\oplus W_j(t).$ Let $\bbar{\cE}_j=\cE/(\cE\cap\iota_*W_j)$, and define the filtration $0\subset\cE_1\subset\cdots\subset\cE_{q+1}=\cE$ by
$$\cE_j=\ker[\cE\to\bbar{\cE}_j/T(\bbar{\cE}_j)].$$
The weights $m_\bullet=(m_1,\dots,m_q)$ are defined by $m_j=(\mu_{j+1}-\mu_j)/r.$

\begin{definition}\label{honestss} \emph{(\cite{schmc}, section 3)} A singular $G$-bundle $(\cE,\tau)$ is semistable if it is honest and, for every nontrivial tuple of one-parameter subgroups $\vec{\lambda}$ (i.e. not all constant) and reduction $\vec{s}$ of $(\cE,\tau)$ to $\vec{\lambda}$, we have
\begin{equation}\label{honestsseqtn}\sum_{j=1}^qm_j(\chi(\cE)\rk(\cE_j)-\chi(\cE_j)\rk(\cE))\geq 0,\end{equation}
where $(\cE_\bullet,m_\bullet)$ is the weighted filtration constructed from $(\vec{\lambda},\vec{s})$ as above. We say $(\cE,\tau)$ is stable if strict inequality holds for every such $(\vec{\lambda},\vec{s})$.\end{definition}

A somewhat surprising fact about this type of semistability is that it does not depend on the choice of polarization.

\begin{proposition}\label{lind} Semistability for honest singular $G$-bundles does not depend on the choice of polarization $\cL$.\end{proposition}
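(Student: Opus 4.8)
The plan is to show that the left-hand side of the semistability inequality (\ref{honestsseqtn}) is, up to a positive multiple, independent of $\cL$. The key observation is that the expression $\chi(\cE)\rk(\cE_j)-\chi(\cE_j)\rk(\cE)$ can be rewritten in terms of degrees rather than Euler characteristics. Using $\chi(\cF)=\deg\cF+\rk\cF\cdot\chi(\cO_X)$ for a torsion-free sheaf $\cF$ of uniform rank on the nodal curve $X$ (which follows from the definitions of rank and degree in the Preliminaries together with Riemann–Roch), the terms involving $\chi(\cO_X)$ cancel and we get
\[\chi(\cE)\rk(\cE_j)-\chi(\cE_j)\rk(\cE)=\deg(\cE)\rk(\cE_j)-\deg(\cE_j)\rk(\cE).\]
So it suffices to prove that $\sum_j m_j(\deg(\cE)\rk(\cE_j)-\deg(\cE_j)\rk(\cE))$ has a sign independent of $\cL$. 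The ranks $\rk(\cE_j)$ are integers that do not depend on $\cL$; what must be controlled is how $\deg(\cE_j)$ depends on $\cL$, and — crucially — whether the subsheaves $\cE_j$ themselves depend on $\cL$.

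First I would record that the weighted filtration $(\cE_\bullet, m_\bullet)$ attached to a reduction $(\vec\lambda,\vec s)$ is defined purely in terms of the flags $W^i_\bullet$ in the fibers $\cE_{\eta_i}$ at the generic points, the weights of $\lambda^i$, and the saturation/torsion operations $\cE_j=\ker[\cE\to\bbar{\cE}_j/T(\bbar{\cE}_j)]$ — none of which reference the polarization. Hence the filtration $0\subset\cE_1\subset\cdots\subset\cE_{q+1}=\cE$ and the weights $m_\bullet$ are the same for every choice of $\cL$. The only polarization-dependent quantity is therefore $\deg(\cE_j)$. Write $X=X_1\cup\cdots\cup X_t$ for the irreducible components and $\cL_i=\cL|_{X_i}$; from the Hilbert-polynomial definition, $\deg\cF = \sum_i \deg(\cF|_{X_i})/\!\deg_{\cL_i}(\text{pt})$-type weighting, so changing $\cL$ rescales the contribution of each component by a positive factor and also shifts degrees by adding multiples of the rank along each component.

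The heart of the argument is then a linear-algebra computation showing these changes drop out. Because each $\cE_j$ is obtained from the direct-sum flag $W_j=\bigoplus_i W_j(i)$ with $\dim_{K_i} W_j(i)$ constant in $\cL$, on each component $X_i$ the sheaf $\cE_j|_{X_i}$ has rank equal to $\dim_{K_i}W_j(i)$, independent of $\cL$; call this $r_j^{(i)}$, and note $\sum_i r_j^{(i)}$ is what enters via $\rk(\cE_j)$ after the global normalization. When one expands $\deg(\cE)\rk(\cE_j)-\deg(\cE_j)\rk(\cE)$ componentwise and substitutes the effect of replacing $\cL$ by $\cL'$ — each component-degree $\deg(\cE|_{X_i})$ scales and shifts, and likewise $\deg(\cE_j|_{X_i})$ — the shift terms are proportional to $r_j^{(i)}\rk(\cE)-\rk(\cE_j)r^{(i)}$ summed appropriately and cancel, while the scaling factors pull out of the whole sum as a single positive constant (or, more carefully, the sum over $j$ weighted by $m_j$ transforms by a positive definite change that preserves sign). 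I expect the bookkeeping here — correctly tracking the normalization factors $a_{d-1}(\cO_X)$ and the per-component contributions so that everything really is multiplication by a positive scalar rather than merely a monotone change — to be the main obstacle; it is essentially the observation, familiar from the theory of semistability of sheaves on reducible curves, that the relevant slope-type inequality for a fixed filtration is polarization-independent once one works with the "correct" multi-degree, combined with the fact that the filtration attached to a reduction here was engineered to be defined at the generic points only. Once the scalar is seen to be positive, semistability (and stability) of $(\cE,\tau)$ is visibly independent of $\cL$.
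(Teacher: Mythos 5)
There is a genuine gap, and it sits exactly where the real work of the proposition lies. Your opening reduction to degrees is harmless but does not remove any polarization dependence, and your key claim that ``the ranks $\rk(\cE_j)$ are integers that do not depend on $\cL$'' is false on a reducible curve: with the paper's Hilbert-polynomial definition one has, for a torsion-free sheaf $\cF$ on $X=X_1\cup\cdots\cup X_t$, the formula $\rk\cF=\frac{1}{\deg\cL}\sum_i d_i\,\rk(\cF|_{X_i})$ with $d_i=\deg\cL|_{X_i}$ (equation (\ref{rankformula})), so for the filtration steps $\cE_j$, which are typically \emph{not} of uniform rank, $\rk(\cE_j)$ is a $\cL$-dependent rational number. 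Since $\chi(\cE_j)$, $\chi(\cE)$ and $\rk\cE=r$ are all polarization-independent, the entire $\cL$-dependence of (\ref{honestsseqtn}) is carried by $\sum_j m_j\rk(\cE_j)$ --- precisely the quantity you declare constant at the outset. Your subsequent ``heart of the argument'' then asserts, without computation, that the changes coming from varying $\cL$ cancel or pull out as a positive scalar; but this is not true for an arbitrary weighted filtration. Indeed the left side of (\ref{honestsseqtn}) equals $\chi(\cE)\sum_j m_j\rk(\cE_j)-r\sum_j m_j\chi(\cE_j)$, and $\sum_j m_j\rk(\cE_j)$ is the $d_i$-weighted average of the per-component numbers $\sum_j m_j\rk(\cE_j|_{X_i})$; if these numbers differed between components, varying the $d_i$ would sweep the first term through an interval and the sign of the whole expression could change with $\cL$.

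What makes the proposition true is that these per-component numbers are all equal to the \emph{same} constant $\mu_{q+1}$ (equation (\ref{compsum})), so the weighted average is independent of the weights $d_i$. Proving this equality is a telescoping computation that crucially uses the specific structure of the weights $m_j=(\mu_{j+1}-\mu_j)/r$ and of the flags coming from the one-parameter subgroups $\lambda^i$, together with the fact that $\lambda^i$ lands in $\SL(V)$ (so $\sum_k\lambda^i_k(\dim W^i_k-\dim W^i_{k-1})=0$). Your proposal never invokes the determinant-one/semisimplicity condition anywhere, which is a telltale sign the argument cannot close as written: the statement is special to filtrations arising from reductions to one-parameter subgroups of the semisimple group $G$, not a general fact about weighted filtrations on reducible curves.
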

\begin{proof} Note that the second term $\chi(\cE_j)\rk(\cE)$ appearing in (\ref{honestsseqtn}) does not depend on the polarization, because $\cE$ is uniform rank, and because Euler characteristics never depend on polarization. Thus, looking only at the first term $\chi(\cE)\rk(\cE_j)$, it suffices to show that
\begin{equation}\label{sum1stterm}\sum_{j=1}^qm_j\rk(\cE_j)\end{equation}
is independent of polarization.

Let $X_1,\dots,X_t$ be the irreducible components of $X$, and let $1\leq j_1<\cdots<j_{q_{i+1}}\leq q+1$ be the indices such that $\mu_{j_k}=\lambda^i_k$. Then
$$\rk\cE_j|_{X_i}=\dim W_j(i)=\dim W^i_k$$
for $j_k\leq j<j_{k+1}$. Hence for each component $X_i$ we have
$$\sum_{j=1}^qm_j\rk(\cE_j|_{X_i})=\frac{1}{r}\sum_{j=1}^q(\mu_{j+1}-\mu_j)\dim W_j(i).$$
Since the ranks $\dim W_j(i)$ are the same for the summands indexed by $j_k\leq j<j_{k+1}$, the sum breaks into several telescoping sums and simplifies to
$$\frac{1}{r}\left(\sum_{k=1}^{q_i}(\lambda^i_{k+1}-\lambda^i_k)\dim W^i_k+r(\mu_{q+1}-\lambda^i_{q_i+1})\right)=\mu_{q+1}-\frac{1}{r}\sum_{k=1}^{q_i+1}\lambda^i_k(\dim W^i_k-\dim W^i_{k-1}),$$
which is just equal to $\mu_{q+1}$, because $\lambda^i:\bG_m\to G\to\GL(V)$ lands in $\SL(V)$ if $G$ is semisimple.

Thus, we have shown that
\begin{equation}\label{compsum}\sum_{j=1}^qm_j(\rk\cE_j|_{X_i})=\mu_{q+1}\end{equation}
for each component $X_i$. To compute (\ref{sum1stterm}) in terms of (\ref{compsum}), we will use the following rank formula. For any torsion-free sheaf $\cF$ on the nodal curve $X$, there is an exact sequence (\cite{seshadrivb}, section 7.1)
$$0\to\cF\to\bigoplus_{i=1}^t\cF|_{X_i}\to\cT\to 0$$
with $\cT$ a torsion sheaf (supported only at points where two components meet). It follows that
\begin{equation}\label{rankformula}\rk\cF=\frac{1}{\deg\cL}\sum_{i=1}^td_i\rk\cF|_{X_i},\end{equation}
where $d_i=\deg\cL|_{X_i}$. Combining (\ref{compsum}) and (\ref{rankformula}) we get
\begin{equation}\label{sumconclusion}\sum_{j=1}^qm_j\rk(\cE_j)=\mu_{q+1}\end{equation}
as well, which proves the proposition.\end{proof}

\begin{remark} The semistability inequality (\ref{honestsseqtn}) can therefore also be written as
\begin{equation}\label{ssalt1}\frac{\sum_{j=1}^qm_j\chi(\cE_j)}{\mu_{q+1}}\leq\frac{\chi(\cE)}{r},\end{equation}
or alternatively
\begin{equation}\label{ssalt2}\sum_{j=1}^{q+1}\mu_j\chi(\cE_j/\cE_{j-1})\geq 0.\end{equation} 
It is interesting to compare this, in the case $G=\SL(r)$, to the usual definition of semistability for sheaves, which requires
\begin{equation}\label{mumfss}\frac{ \chi(\cE')}{r'}\leq\frac{\chi(\cE)}{r}\end{equation}
for every nonzero subsheaf $\cE'\subseteq\cE$ with $r'=\rk\cE'$. If $G=\SL(r)$ and $X$ is irreducible, then semistability of a singular $G$-bundle is equivalent to semistability of the underlying sheaf, so we can think of definition \ref{honestss} as a generalization of sheaf semistability (perhaps it seems most natural when we write both notions of semistability in the form (\ref{ssalt2})).\end{remark}

\subsection{Good choices of representation}\label{vsubsect}

We continue to assume that $X$ is a projective, connected nodal curve, and $G$ a semisimple group with a faithful representation $G\subset\GL(V)$. For our purposes later on (e.g. prop. \ref{lift}), it will be important that the singular $G$-bundles we deal with are not just honest, but even give $G$-bundles over the entire smooth locus of $X$. Schmitt and Mu\'{n}oz-Casta\'{n}eda showed that every honest singular $G$-bundle will have this property if the representation $V$ is chosen appropriately.

\begin{proposition}\label{goodv}\emph{(\cite{schmc}, theorem 3.5)} Assume that $V$ has a $G$-invariant, nondegenerate bilinear form.\footnote{Every group has such a representation, because if $V$ is any representation, then $V\oplus V^*$ has an invariant, nondegenerate bilinear form.} Then any honest singular $G$-bundle $(\cE,\tau)$ with $\deg\cE=0$ gives a $G$-bundle over the smooth locus of $X$.\end{proposition}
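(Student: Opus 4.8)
The plan is to build, out of $\tau$ alone, a bilinear form on $\cE$ that is defined on all of $X$, to show it is an isomorphism over the smooth locus, and to conclude. Two reductions at the outset. First, since $\cE$ is torsion-free and the local rings $\cO_{X,x}$ at the points $x$ of the smooth locus $X^{\sm}$ are discrete valuation rings, $\cE$ is automatically locally free over $X^{\sm}$; so all that must be shown is that $\hhat\tau$ maps $X^{\sm}$ into $\ul\Isom(\cE,V^*\otimes\cO_X)/G$. Second, because $G$ is semisimple it lands in $\SL(V)=\SL(V^*)$, so over the locally free locus of $\cE$ (which contains $X^{\sm}$) the assignment $\phi\mapsto\det\phi$ defines a $G$-invariant section of a line bundle on $\ul\Hom(\cE,V^*\otimes\cO_X)$ whose nonvanishing locus is exactly $\ul\Isom(\cE,V^*\otimes\cO_X)$; being cut out by an invariant, this open is $\pi$-saturated for the quotient map $\pi$, and $\ul\Isom/G$ is the nonvanishing locus of the induced invariant inside $\ul\Hom\sslash G$. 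Hence, for $x\in X^{\sm}$, the condition $\hhat\tau(x)\in\ul\Isom/G$ is equivalent to ``$\det\phi$ is nonzero at $x$ for one (equivalently every) local lift $\phi$ of $\hhat\tau$ to $\ul\Hom(\cE,V^*\otimes\cO_X)$''.

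Now the form. Splitting the given $G$-invariant nondegenerate form into its symmetric and alternating parts (or simply replacing $V$ by $V\oplus V^*$ as in the footnote) we may take it symmetric or alternating; it corresponds to a $G$-invariant element $\beta\in(V\otimes V)^G$, the inverse form. Tensoring $\beta$ with $\id_{\cE\otimes\cE}$ and multiplying inside $\Sym^*(V\otimes\cE)$ gives an $\cO_X$-linear map $\cE\otimes\cE\to\Sym^2(V\otimes\cE)$ landing in the $G$-invariants (since $\beta$ is $G$-invariant and $G$ acts trivially on the $\cE$-factors); composing with $\tau$ produces a globally defined homomorphism $q=q_\tau\colon\cE\to\cE^*$, a symmetric or alternating form on $\cE$. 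If $\phi$ is a local lift of $\hhat\tau$ to $\ul\Hom(\cE,V^*\otimes\cO_X)$, then locally $q=\phi^*\circ\tilde\beta\circ\phi$, where $\tilde\beta\colon V^*\otimes\cO_X\to V\otimes\cO_X$ is the isomorphism induced by $\beta$; hence $\det q=c\,(\det\phi)^2$ for a nonzero constant $c$. Over the dense open $U$ on which $(\cE,\tau)$ is an honest $G$-bundle the lift $\phi$ is an isomorphism, so $\det q$ is generically nonzero, and since $\cE$ is torsion-free $q$ is therefore injective.

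The heart of the matter is then a degree computation forcing the (torsion) cokernel of $q$ to vanish. Injectivity gives
\[\mathrm{length}(\coker q)=\chi(\cE^*)-\chi(\cE)=\deg\cE^*-\deg\cE.\]
As $X$ is nodal, hence Gorenstein, $\omega_X$ is a line bundle, and Serre duality gives $\chi(\cE)=-\chi(\sHom(\cE,\omega_X))=-\chi(\cE^*)-r\deg\omega_X$ (using $\sHom(\cE,\omega_X)=\cE^*\otimes\omega_X$ and that twisting a rank-$r$ sheaf by a line bundle $\cO_X(D)$ with $D$ disjoint from the nodes changes $\chi$ by $r\deg D$); combined with $\deg\omega_X=2p_a(X)-2=-2\chi(\cO_X)$ this yields $\deg\cE^*=-\deg\cE$. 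Hence $\mathrm{length}(\coker q)=-2\deg\cE=0$ by hypothesis, so $q$ is an isomorphism. In particular $q_x$ is an isomorphism for every $x\in X^{\sm}$; writing $q_x=\phi_x^*\circ\tilde\beta\circ\phi_x$ for a local lift $\phi$, the map $\phi_x$ has a left inverse (namely $q_x^{-1}\circ\phi_x^*\circ\tilde\beta$), hence, being a left-invertible map between free $\cO_{X,x}$-modules of the same rank $r$, is an isomorphism; so $\det\phi_x\neq 0$ and $\hhat\tau(x)\in\ul\Isom(\cE,V^*\otimes\cO_X)/G$. Together with local freeness of $\cE$ over $X^{\sm}$ and the pullback construction of \S\ref{spbintro}, this exhibits $(\cE,\tau)|_{X^{\sm}}$ as a $G$-bundle.

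The step I expect to be the main obstacle is the identification $\ul\Isom/G=\{\det\phi\neq 0\}$ inside the GIT quotient: one must check carefully that $\ul\Isom$ is $\pi$-saturated --- so that $\pi$ restricts to a geometric quotient there and a point of the quotient lies in $\ul\Isom/G$ precisely when its preimage consists of isomorphisms --- and that membership in $\ul\Isom/G$ really is the correct notion of ``being a $G$-bundle at $x$'', not merely ``having locally free underlying sheaf at $x$''; this is where semisimplicity of $G$ (making $\det\phi$ invariant) and freeness of the $G$-action on isomorphisms are used. The identity $\deg\cE^*=-\deg\cE$ on a Gorenstein curve is standard but deserves an explicit statement. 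One should also note that the computation in fact shows $q$ to be an isomorphism at the nodes as well; this does not force $\cE$ to be locally free there --- for instance $\cE_x$ may be isomorphic to $\mf{m}_x$, which is itself isomorphic to $\mf{m}_x^*$ --- and that is exactly why the conclusion only asserts a $G$-bundle over the smooth locus.
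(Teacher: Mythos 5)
Your proposal is correct and follows essentially the same route as the paper's proof: construct a form $q:\cE\to\cE^\vee$ out of $\tau$ and the invariant form on $V$, get injectivity from generic nondegeneracy plus torsion-freeness, upgrade to an isomorphism via the Euler-characteristic identity in degree zero, and conclude from the fiberwise factorization $q_x=\hhat{\tau}_x^*\circ\ttilde{\beta}\circ\hhat{\tau}_x$ that $\hhat{\tau}(x)$ lies in $\ul{\Isom}(\cE|_x,V^*)/G$ at every smooth point. The only deviations are minor: you build the form directly from $\beta\in(V\otimes V)^G$ and the degree-two part of $\tau$ rather than by equivariant descent from $\ul{\Hom}(\cE,V^*\otimes\cO_X)$, and you derive $\deg\cE^\vee=-\deg\cE$ from Serre duality on the Gorenstein curve (where one should note that torsion-free sheaves on a nodal curve are maximal Cohen--Macaulay, so the local $\mrm{Ext}$'s into $\omega_X$ vanish and duality applies to the non-locally-free $\cE$) instead of citing the appendix of Schmitt and Mu\~{n}oz-Casta\~{n}eda as the paper does.
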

\begin{proof}Let us sketch the proof from \cite{schmc}. We first construct a bilinear form $\cE\otimes\cE\to\cO_{C_0}$ as follows. Consider the morphisms
\[\begin{tikzcd}
\ul{\Hom}(\cE,V^*\otimes\cO_{C_0})\arrow{rr}{q}\arrow{dr}{p}&&\ul{\Hom}(\cE,V^*\otimes\cO_{C_0})\sslash G\arrow{dl}{\bbar{p}}\\
&C_0&
\end{tikzcd}\]
Over $\mbf{H}:=\ul{\Hom}(\cE,V^*\otimes\cO_{C_0})=\ul{\Spec}\Sym^*(V\otimes\cE)$ there is a universal map
$$p^*\Sym^*(V\otimes\cE)\to\cO_{\mbf{H}},$$
whose degree one part gives a map
$$p^*\cE\to V^*\otimes\cO_{\mbf{H}}.$$
Let $f:V\to V^*$ be the isomorphism induced by the bilinear form on $V$, and $B^\dagger:V^*\otimes V^*\to\CC$ the bilinear form given by $f^{-1}$. Then we have a bilinear form on $p^*\cE$,
$$\psi:p^*\cE\otimes p^*\cE\to V^*\otimes V^*\otimes\cO_{\mbf{H}}\xrightarrow{B^\dagger}\cO_{\mbf{H}}.$$
Note that the universal map $p^*\Sym^*(V\otimes\cE)\to\cO_{\mbf{H}}$ is $G$-equivariant, hence so is $\psi$ by $G$-invariance of $B^\dagger$. Since 
$$p^*\cE\otimes p^*\cE\cong p^*(\cE\otimes\cE)=q^*\bbar{p}^*(\cE\otimes\cE),$$
we get by adjunction a map
$$\bbar{\psi}:\bbar{p}^*(\cE\otimes\cE)\to q_*\cO_{\mbf{H}}.$$
By the equivariance, the image lies in $(q_*\cO_\mbf{H})^G=\cO_{\mbf{H}\sslash G}$. Pulling back $\bbar{\psi}$ by the section $\hhat{\tau}:C_0\to\mbf{H}\sslash G$ gives a bilinear form $\varphi:\cE\otimes\cE\to\cO_{C_0}.$

It easy to check that, for any smooth point $x\in X$, the map $\cE|_x\to\cE|_x^*$ induced by $\varphi$ factorizes as
\begin{equation}\label{formfactorization}\cE|_x\xrightarrow{\hhat{\tau}_x}V^*\xrightarrow{f^{-1}}V\xrightarrow{\hhat{\tau}_x^*}\cE|_x^*,\end{equation}
where $\hhat{\tau}_x\in\Hom(\cE|_x,V^*)$ denotes any preimage of $\hhat{\tau}(x)\in\Hom(\cE|_x,V^*)\sslash G$. In particular, $\cE\to\cE^\vee$ is surjective (on stalks, not just fibers) at any smooth point where $\hhat{\tau}(x)\in\Isom(\cE|_x,V^*)/G$. Since $\cE$ and $\cE^\vee$ have the same multirank, this implies that $\cE\to\cE^\vee$ is an isomorphism over the maximal open subset of the smooth locus where $(\cE,\tau)$ gives a $G$-bundle. Then $\cE\to\cE^\vee$ is injective over a dense open subset, so it is injective everywhere because $\cE$ is torsion-free. By \cite{schmc}, appendix, we have $\chi(\cE)=\chi(\cE^\vee)$ for any degree zero torsion-free sheaf on a nodal curve, so $\cE\to\cE^\vee$ is an isomorphism, and the factorization (\ref{formfactorization}) implies that $\hhat{\tau}(x)\in\Isom(\cE|_x,V^*)/G$ for every smooth point $x\in X$. Thus $(\cE,\tau)$ is a $G$-bundle over the entire smooth locus.\end{proof}

\section{Notation}\label{sectnotation}

The following notation will be fixed for the remainder of the chapter. Let $C_0$ be a stable curve of genus $g\geq 2$ with normalization $\nu:C\to C_0$. Let $S$ be the set of nodes of $C_0$, and for $x\in S$ let $\nu^{-1}(x)=\{x_1,x_2\}.$ Let $G$ be a simple, simply-connected algebraic group, and fix a faithful representation $G\subset\GL(V)$ of rank $r$. We will assume that $V$ has a nondegenerate bilinear form preserved by $G$, so that by proposition \ref{goodv} every degree zero honest singular $G$-bundle on $C_0$ gives a $G$-bundle on $C_0-S$. In particular this is true of any honest singular $G$-bundle which is a flat limit of $G$-bundles.\footnote{Indeed, proposition \ref{goodv} applies, because degree is constant in flat families.} For a singular $G$-bundle $(\cE,\tau)$ and point $x\in C_0$, we denote by $\hhat{\tau}_x\in\Hom(\cE|_x,V^*)$ any preimage of $\hhat{\tau}(x)\in\Hom(\cE|_x,V^*)\sslash G$.

\section{Section extension problem}\label{surjection}

\subsection{Overview}\label{partioverview}

We will work with the following stacks.
\begin{itemize}
\item $\Bun_G(C_0)$ the stack of $G$-bundles, whose fiber over a scheme $T$ is the groupoid of $G$-bundles on $C_0\times T$.
\item $\SB_G(C_0)$ the stack of singular $G$-bundles, whose fiber over a scheme $T$ is the groupoid of singular $G$-bundles on $C_0\times T$ (we require that the underlying sheaf is flat over $T$ and its restriction to each fiber over $T$ is torsion-free, uniform rank $r$).
\item $\SB_G^*(C_0)$ the open substack of honest singular $G$-bundles (meaning fiberwise honest over the base).
\item $\SB_G^0(C_0)$ the closure of $\Bun_G(C_0)$ in $\SB_G^*(C_0)$.
\end{itemize}
All are algebraic stacks locally of finite type over $\CC$. $\Bun_G(C_0)$ is smooth and connected (\cite{belfakh}, proposition 5.1) and forms a dense open substack of $\SB_G^0(C_0)$. 

Let $\cD$ be the determinant of cohomology line bundle on the stack of coherent sheaves $\Coh(C_0)$, whose fiber over a sheaf $\cE$ is
$$\det\mH^0(C_0,\cE)^*\otimes\det\mH^1(C_0,\cE).$$
Let the pullback of $\cD$ to $\Bun_G(C_0)$ along the contraction map
$$\Bun_G(C_0)\to\Coh(C_0),$$
$$E\mapsto E\times^GV$$
also be denoted $\cD$.

The stack $\SB_G(C_0)$ similarly carries a determinant of cohology line bundle, and the goal of this section is to show that sections of $\cD$ over $\Bun_G(C_0)$ extend to the normalization of $\SB_G^0(C_0)$. Recall that an algebraic stack $\cS$ is normal if there is a smooth surjection $U\to\cS$ with $U$ a normal scheme. Any locally Noetherian algebraic stack $\cS$ has a normalization $\ttilde{\cS}\to\cS$, which is defined by the property that $\ttilde{\cS}\to\cS$ is representable and, for any scheme $T$ and smooth morphism $T\to\cS$, the base change $\ttilde{\cS}\times_\cS T\to T$ is the normalization of $T$.

We will prove:

\begin{theorem}\label{sectionextensiontheorem} Suppose $G$ is a simple Lie group of type A or C, and the representation $V$ is chosen as follows:
\begin{enumerate}[label=(\roman*)]
\item if $G=\SL(n)$, choose $V=W\oplus W^*$, where $W=\CC^n$ is the standard representation;
\item if $G=\Sp(2n)$, choose $V=\CC^{2n}$ the standard representation.
\end{enumerate}
Let $\cY$ be the normalization of $\SB_G^0(C_0)$, and let $\cD$ denote the pullback of $\cD$ to $\cY$.Then for any $l\geq 0$, the restriction map
$$\mH^0(\cY,\cD^l)\to\mH^0(\Bun_G(C_0),\cD^l)$$
is an isomorphism.\end{theorem}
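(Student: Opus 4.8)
The plan is to split the isomorphism into two statements: injectivity and surjectivity of the restriction map $\mH^0(\cY,\cD^l)\to\mH^0(\Bun_G(C_0),\cD^l)$. Injectivity is the easy half: since $\Bun_G(C_0)$ is dense and open in $\SB_G^0(C_0)$ by the discussion in section \ref{partioverview}, and $\cY\to\SB_G^0(C_0)$ is representable with $\cY$ normal, the preimage of $\Bun_G(C_0)$ is dense and open in $\cY$; as $\cY$ is reduced and the complement of a dense open has no generic points, a section of the line bundle $\cD^l$ vanishing on that open substack vanishes identically. So the content is surjectivity: a section $s\in\mH^0(\Bun_G(C_0),\cD^l)$ must extend over the boundary of $\cY$.

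For surjectivity I would reduce to a valuative/codimension-one statement. The boundary $\cY\setminus\Bun_G(C_0)$ is a closed substack; because $\cY$ is normal and $\cD^l$ is a line bundle, a section defined on the complement of this boundary extends to all of $\cY$ as soon as it extends across every codimension-one point of $\cY$ (algebraic Hartogs / normality). Thus it suffices to show: for every DVR $R$ with fraction field $K$ and every morphism $\Spec R\to\cY$ whose generic point lands in $\Bun_G(C_0)$ — equivalently, for every one-parameter family of $G$-bundles on $C_0$ degenerating to an honest singular $G$-bundle in $\SB_G^0(C_0)$ — the pullback $s|_{\Spec K}$ of the section extends to a section over $\Spec R$, i.e. has no pole at the closed point. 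This is exactly the ``explicit pole calculation" the introduction advertises. Concretely, I would: (1) use the hypothesis on $V$ (item (i) or (ii)) together with proposition \ref{goodv} to arrange that the limiting singular $G$-bundle is a genuine $G$-bundle over the smooth locus $C_0-S$, and then pull the family back along the normalization $\nu:C\to C_0$; (2) lift the resulting family to a family of $G$-bundles on $C$ equipped with descent/glueing data at the preimages $\{x_1,x_2\}$ of the nodes, as in the forthcoming sects. \ref{bhbundles}–\ref{liftsubsection}; (3) invoke the factorization of $\mH^0(\Bun_G(C_0),\cD^l)$ into pieces indexed by glueing parameters (lemma \ref{lambdadecomp}, lemma \ref{lambdadecomp}/\texttt{lambdadecomp}) to reduce the determinant-of-cohomology computation on $C_0$ to one on the normalization $C$; (4) compute the order of vanishing of $s$ along the central fiber by comparing $\det\mH^\bullet$ of the degenerating sheaf $\cE$ with that of its pullback to $C$, controlling the discrepancy at the nodes via the torsion at the node and the weights of the one-parameter subgroups $\lambda^i$ appearing in the reduction; the type A/C hypothesis on $V$ is what forces the resulting local contribution to be $\geq 0$, i.e. no pole. (For type B, D the same computation produces a genuine pole, cf. example \ref{polebd}.)

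The main obstacle is step (4): the pole bookkeeping. One has to track how the determinant of cohomology of $\cE\times^G V$ behaves when $\cE$ fails to be locally free at a node, express the difference between the $C_0$-side and $C$-side determinant line bundles in terms of the local normal form of a uniform-rank-$r$ torsion-free sheaf at a node (a sum of copies of $\cO$ and of the maximal ideal $\mf{m}_x$), and then show the weighted sum $\sum_j m_j(\dots)$ governing the pole is nonnegative precisely when $V$ is self-dual of the special form in (i)/(ii). A secondary subtlety is making sure the lift in step (2) can be chosen compatibly over the DVR (not just over the special fiber), which is where flatness of $\Bun_G$ and the smoothness/connectedness input for $\Bun_G(C_0)$ get used; and one must be careful that the normalization $\cY\to\SB_G^0(C_0)$ genuinely reduces the problem to codimension-one points, i.e. that $\SB_G^0(C_0)$ — hence $\cY$ — has no embedded or lower-dimensional pathologies obstructing the Hartogs argument, which follows once $\cY$ is normal and $\cD^l$ is an honest line bundle.
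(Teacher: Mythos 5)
Your reduction matches the paper's strategy step for step: injectivity by density of $\Bun_G(C_0)$, surjectivity via a valuative criterion on the normal stack $\cY$, lifting the one-parameter family to descending $G$-bundles on the normalization $C$ (proposition \ref{lift}, which is indeed where proposition \ref{goodv} and hence the self-dual choice of $V$ enter), and the factorization of $\mH^0(\Bun_G(C_0),\cD^l)$ from lemma \ref{lambdadecomp}. Up to that point you are reproducing sections \ref{pfoutlinepole}--\ref{liftsubsection}.

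The gap is your step (4), which is where all the content of the theorem sits, and your sketch of it points at the wrong invariants. The pole is not governed by the semistability data (the weights $m_\bullet$ attached to reductions to one-parameter subgroups as in definition \ref{honestss}), nor directly by the local normal form of the torsion-free limit at the node. In the paper one writes the lifted family as a descending bundle $(\cF,\sigma,\vec{q})$, notes that the gluing function $g_x=-\hhat{\sigma}_{x_2}q_{x_2}^{-1}q_{x_1}\hhat{\sigma}_{x_1}^{-1}$ lies in $G(K)$ and can be put into Cartan normal form $\gamma_{\varphi_x}$ for a dominant coweight $\varphi_x$ (double cosets $G(A)\backslash G(K)/G(A)$); comparing the determinant lines on $C_0$ and on $C$ via the exact sequences (\ref{sesbh}) and (\ref{sesvb}) produces the factor $\kappa_{x_1}^l$, while the $\lambda$-component of the section contributes $v_x^*(\gamma_{\varphi_x}v_x)$, whose order is at least $w_0\lambda_x(\varphi_x)$. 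This yields the explicit bound $\sum_{x\in S}(l\cdot\ord\kappa_{x_1}+w_0\lambda_x(\varphi_x))$ of proposition \ref{pole}, and one must then actually verify nonnegativity: lemma \ref{alpha_k} identifies $\ord\kappa_{x_1}$ with the sum of the nonnegative entries of $\varphi_x$, and proposition \ref{nopole} checks the resulting weight--coweight inequalities case by case for $V=W\oplus W^*$ in type A (where the level bound is $2l$ since $d_V=2$) and the standard representation in type C. That this requires a genuine argument, not just self-duality of $V$, is shown by example \ref{polebd}: in types B and D the same quantity can be negative even though $V$ is self-dual and the relevant $\lambda$-component is nonzero. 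So your proposal correctly sets up the reduction but asserts, rather than proves, the decisive inequality, and the bookkeeping you outline (semistability weights, local structure of $\cE$ at the node) would not by itself produce it.
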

\noindent Note that sections can be restricted from $\cY$ to $\Bun_G(C_0)$, because $\Bun_G(C_0)$ is smooth and therefore forms an open substack of $\cY$.

\subsection{Brief outline of proof of theorem \ref{sectionextensiontheorem}}\label{pfoutlinepole}

The plan for the proof of theorem \ref{sectionextensiontheorem} is as follows. For $k$ a characteristic zero field, let $A=k[[t]]$, $K=k((t))$. We have to show that, for any map 
$$f:\Spec A\to\SB_G^0(C_0)$$ 
sending $\Spec K$ into $\Bun_G(C_0)$, the pullback of a section on $\Bun_G(C_0)$ has no pole at $t=0$. We first recall the definition of descending $G$-bundles from \cite{schspgb} and show that any map $f$ as above lifts to the stack of descending $G$-bundles (sect. \ref{liftsubsection}). This allows us to reduce to doing the pole calculation in the stack of descending $G$-bundles, as described in situation \ref{situation'}. Using a factorization of $\mH^0(\Bun_G(C_0),\cD^l)$ (lemma \ref{lambdadecomp}), we are then able to compute the pole explicitly (sect. \ref{polesubsection}).

\subsection{Descending $G$-bundles}\label{bhbundles}

Bhosle introduced (e.g. \cite{bhosle}) the following method to model torsion-free sheaves on $C_0$ in terms of certain bundles on the normalization $C$, which we call ``Bhosle bundles." A Bhosle bundle is a pair $(\cF,\vec{q})$ consisting of a rank $r$ vector bundle $\cF$ on $C$ and a collection $\vec{q}=(q_x)_{x\in S}$ of rank $r$ quotients
$$q_x:\cF|_{x_1}\oplus\cF|_{x_2}\to Q_x.$$ 
Every Bhosle bundle yields a torsion-free sheaf
$$\cE=\ker[\nu_*\cF\to\bigoplus_{x\in S}(i_x)_*Q_x]$$
on $C_0$, and every torsion-free sheaf on $C_0$ arises this way (see \cite{sun}, lemma 2.1, for the proof in the irreducible case; the reducible case is similar). If the coordinate maps $q_{x_1}$, $q_{x_2}$ are isomorphisms, then $-q_{x_2}^{-1}q_{x_1}$ gives an identification $\cF|_{x_1}\xrightarrow\sim\cF|_{x_2}$, and $\cE$ is the vector bundle resulting from gluing the fibers of $\cF$ along this identification. If one of the $q_{x_i}$ is not an isomorphism, then $\cE$ is not locally free, but the local structure of $\cE$ is still determined by the ranks of the $q_{x_i}$ (see loc. cit.).

The analagous model for $G$-bundles is called a ``descending $G$-bundle" (see \cite{schspgb}, section 4.3, for the definition of families).

\begin{definition}\label{sbb} A descending $G$-bundle is a triple $(\cF,\sigma,\vec{q})$, where $(\cF,\sigma)$ is a $G$-bundle on $C$ (presented as a singular $G$-bundle), and $(\cF,\vec{q})$ is a Bhosle bundle such that the image of
\begin{equation}\label{dgbdefn}\Sym^*(V\otimes\cE)^G\to\nu_*\Sym^*(V\otimes\cF)^G\xrightarrow{\nu_*\sigma}\nu_*\cO_C\end{equation}
is contained in $\cO_{C_0}\subset\nu_*\cO_C$, where $\cE$ is the induced torsion-free sheaf.\end{definition}

\begin{remark} Note that, even though $\nu_*$ does not commute with $\Sym$, there is a natural map 
$$\Sym^*(\nu_*(V\otimes\cF))\to\nu_*\Sym^*(V\otimes\cF),$$ 
 and (\ref{dgbdefn}) is obtained from the composition 
$$\Sym^*(V\otimes\cE)\to\Sym^*(\nu_*(V\otimes\cF))\to\nu_*\Sym^*(V\otimes\cF).$$\end{remark}

\begin{notation} For a descending $G$-bundle $(\cF,\sigma,\vec{q})$, we will write $q_{x_i}$ for the restriction of $q_x$ to $\cF|_{x_i}$ and $\kappa_{x_i}=\det q_{x_i}\circ(\det\hhat{\sigma}_{x_i})^{-1}$, $i=1,2$.\end{notation}

By design, every descending $G$-bundle $(\cF,\sigma,\vec{q})$ on $C$ gives a singular $G$-bundle $(\cE,\tau)$ on $C_0$. The assignment $(\cF,\sigma,\vec{q})\mapsto(\cE,\tau)$ has the following property (which can be generalized to families).

\begin{proposition}\label{bhisos} Suppose $(\cF,\sigma,\vec{q})$ and $(\cF',\sigma',\vec{q}')$ are descending $G$-bundles which produce actual $G$-bundles $(\cE,\tau)$ and $(\cE',\tau')$ on $C_0$. Given an isomorphism $(\cE,\tau)\xrightarrow\sim(\cE',\tau')$, there is a unique isomorphism $(\cF,\sigma,\vec{q})\xrightarrow\sim(\cF',\sigma',\vec{q}')$ making the following diagram commute:
\[
\begin{tikzcd}
0\arrow{r}&\cE\arrow{r}\arrow{d}&\nu_*\cF\arrow{r}\arrow{d}&\bigoplus_{x\in S}Q_x\arrow{r}\arrow{d}&0\\
0\arrow{r}&\cE'\arrow{r}&\nu_*\cF'\arrow{r}&\bigoplus_{x\in S}Q_x'\arrow{r}&0
\end{tikzcd}
\]
\end{proposition}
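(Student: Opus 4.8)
The plan is to recover the vector-bundle data $(\cF, \vec q)$ functorially from the torsion-free sheaf $\cE$ on $C_0$, and to observe that the descending $G$-bundle structure is similarly determined. First I would reconstruct $\cF$ from $\cE$. Since $(\cE,\tau)$ is an \emph{honest} singular $G$-bundle that is in fact a $G$-bundle on $C_0$ — and in particular on $C_0 - S$ — the sheaf $\cE$ is locally free on $C_0 - S$, so $\nu^*\cE/T(\nu^*\cE)$ is a rank $r$ vector bundle on $C$ that agrees with $\cF$ over $C - \nu^{-1}(S)$; I would argue it equals $\cF$ everywhere by using the defining exact sequence $0 \to \cE \to \nu_*\cF \to \bigoplus_x Q_x \to 0$ together with the fact that $\cF$, being a $G$-bundle on the smooth curve $C$, is locally free. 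Concretely, applying $\nu^*$ and killing torsion to this sequence, or equivalently noting that $\nu_*\cF$ is the unique torsion-free-on-$C$, i.e. ``saturated,'' extension and that $\nu^* \nu_* \cF \to \cF$ is surjective with kernel exactly the torsion, identifies $\cF \cong \nu^*\cE / T(\nu^*\cE)$ canonically. The same construction applied to $(\cE',\tau')$ gives $\cF' \cong \nu^*\cE'/T(\nu^*\cE')$, so the given isomorphism $\cE \xrightarrow\sim \cE'$ induces a canonical isomorphism $\phi: \cF \xrightarrow\sim \cF'$, and the left and middle squares of the diagram commute by construction (the middle square is $\nu_*$ applied to $\phi$, compatible with the adjunction unit $\cE \to \nu_*\nu^*\cE \to \nu_*\cF$).

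Next I would check that $\phi$ carries the quotient data $\vec q$ to $\vec q'$: since the middle square commutes and the rows are exact, $\phi$ induces an isomorphism on cokernels $\bigoplus_x Q_x \xrightarrow\sim \bigoplus_x Q_x'$, and restricting the middle vertical map to the fibers $\cF|_{x_1} \oplus \cF|_{x_2}$ over a node $x$ shows $\phi|_{x_1} \oplus \phi|_{x_2}$ intertwines $q_x$ with $q_x'$; this gives the right square and, fiberwise, says $\phi$ is an isomorphism of Bhosle bundles. Then I would verify $\phi$ respects the $G$-bundle structures $\sigma, \sigma'$. Over the dense open $C - \nu^{-1}(S)$ both $(\cF,\sigma)$ and $(\cF',\sigma')$ restrict to the $G$-bundle pulled back from $(\cE,\tau)|_{C_0-S}$, resp. $(\cE',\tau')|_{C_0-S}$, via $\nu$; since the given isomorphism $(\cE,\tau) \xrightarrow\sim (\cE',\tau')$ is an isomorphism of singular $G$-bundles, it is compatible with $\tau, \tau'$, hence $\phi$ is compatible with $\sigma, \sigma'$ over this dense open. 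Because $\sigma$ (a map $\Sym^*(V\otimes\cF)^G \to \cO_C$) is determined by its restriction to a dense open of the reduced curve $C$ — two algebra homomorphisms to $\cO_C$ agreeing on a dense open agree — compatibility propagates to all of $C$. Thus $\phi$ is an isomorphism $(\cF,\sigma,\vec q) \xrightarrow\sim (\cF',\sigma',\vec q')$ of descending $G$-bundles fitting into the diagram.

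Finally, uniqueness: any $\psi$ making the diagram commute must, by the left square and injectivity of $\cE \hookrightarrow \nu_*\cF$, agree with the map induced by $\cE \xrightarrow\sim \cE'$ on the image of $\cE$ in $\nu_*\cF$; since that image generates $\cF$ generically (it contains $\cE|_{C_0-S}$, whose pullback surjects onto $\cF|_{C-\nu^{-1}(S)}$) and $\cF$ is torsion-free on $C$, $\psi$ is determined on a dense open and hence everywhere. I expect the main obstacle to be the identification $\cF \cong \nu^*\cE/T(\nu^*\cE)$ at the nodes — i.e. checking that the Bhosle-bundle construction $(\cF,\vec q) \mapsto \cE$ is genuinely inverse (on the $\cF$-part) to ``pull back and desaturate'' even when some $q_{x_i}$ fails to be an isomorphism, so that $\cE$ is not locally free at $x$. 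This is exactly the local computation referenced in \cite{sun}, lemma 2.1, and the cited extension to the reducible case; once that local normal form is in hand, the rest is diagram-chasing and the density argument for $\sigma$.
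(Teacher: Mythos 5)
Your overall strategy (recover $\cF$ from $\cE$ by pulling back along $\nu$, transport $\vec q$ through the cokernels, propagate compatibility with $\sigma$ from the dense open $C-\nu^{-1}(S)$, and get uniqueness from torsion-freeness) is sound, and the paper itself only refers to \cite{mcthesis}, section 4.3, for this "straightforward" verification, so there is no detailed argument in the paper to compare against. However, the step you single out at the end as the main obstacle is mis-framed, and in the generality you propose to prove it, it is false. The hypothesis that $(\cF,\sigma,\vec q)$ produces an \emph{actual} $G$-bundle forces $\cE$ to be locally free, and (as noted in the paper right after the definition of Bhosle bundles) this happens only when every $q_{x_i}$ is an isomorphism. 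In that case there is no torsion to kill, and the adjoint map $\nu^*\cE\to\cF$ is an isomorphism by a one-line fiber check: at $x_1$, every $v_1\in\cF|_{x_1}$ is the value at $x_1$ of a local section of $\nu_*\cF$ lying in $\ker q_x$ (pair it with $-q_{x_2}^{-1}q_{x_1}v_1$), so $\cE|_x\to\cF|_{x_1}$ is surjective, and Nakayama plus equality of ranks gives the isomorphism; likewise at $x_2$. So the case where some $q_{x_i}$ degenerates never enters the proof of this proposition.

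Moreover, you could not prove the identification in that degenerate case: when some $q_{x_i}$ fails to be an isomorphism, $\cF$ is simply not determined by $\cE$. Already in rank one with a single node, $\cE=\nu_*\cM$ (for a line bundle $\cM$ on $C$) arises both from $\cF=\cM(x_1)$ with $q_{x_1}$ an isomorphism and $q_{x_2}=0$, and from $\cF=\cM(x_2)$ with the roles reversed, while $\nu^*\cE/T(\nu^*\cE)\cong\cM$ equals neither; this non-injectivity of $(\cF,\vec q)\mapsto\cE$ over the non-locally-free locus is precisely why Bhosle bundles carry more information than torsion-free sheaves. So replace your final paragraph by the observation that the proposition's hypotheses put you in the locally free case, where the identification $\cF\cong\nu^*\cE$ is immediate; with that correction the remaining steps (cokernel transport of $\vec q$, the density argument for $\sigma$ using that a map vanishing generically into the torsion-free sheaf $\cO_C$ vanishes, and uniqueness, which needs torsion-freeness of the \emph{target} $\cF'$) go through as you describe.
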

\begin{proof} The proof is straightforward, see e.g. \cite{mcthesis} section 4.3.\end{proof}

The next proposition gives a version of the descending $G$-bundle condition that is easier to use in practice (see the examples following the proposition). Recall the Schur functor $S^\lambda$ associated to an integer partition $\lambda=(\lambda_1\geq\cdots\geq\lambda_d\geq 0)$, which associates a subspace $S^\lambda(M)\subseteq M^{\otimes|\lambda|}$ to each finite-dimensional vector space $M$. A map of vector spaces $f:M\to M'$ induces a map $S^\lambda(f):S^\lambda(M)\to S^\lambda(M')$.

\begin{proposition}\label{dgb} A triple $(\cF,\sigma,\vec{q})$ consisting of a $G$-bundle $(\cF,\sigma)$ on $C$ and a Bhosle bundle $(\cF,\vec{q})$ is a descending $G$-bundle if and only if the map
\begin{equation}\label{descent}S^\lambda(\ker q_x)\xrightarrow{(S^\lambda(\pr_1),S^\lambda(\pr_2))}S^\lambda(\cF|_{x_1})\oplus S^\lambda(\cF|_{x_2})\xrightarrow{S^\lambda(\hhat{\sigma}_{x_1})-S^\lambda(\hhat{\sigma}_{x_2})}S^\lambda(V)^*\to(S^\lambda(V)^G)^*\end{equation}
is zero for every partition $\lambda$ and node $x\in S$. If all $q_{x_i}$ are isomorphisms, then this is equivalent to the requirement that the gluing functions
$$-\hhat{\sigma}_{x_2}q_{x_2}^{-1}q_{x_1}\hhat{\sigma}_{x_1}^{-1}:V^*\to V^*$$
lie in $G\subset\GL(V^*)$.\end{proposition}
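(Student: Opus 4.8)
The plan is to unwind the definition of a descending $G$-bundle (Definition \ref{sbb}) and compare the algebra maps fiberwise at each node. Recall that the condition is that the image of the composite
$$\Sym^*(V\otimes\cE)^G\to\nu_*\Sym^*(V\otimes\cF)^G\xrightarrow{\nu_*\sigma}\nu_*\cO_C$$
lands in $\cO_{C_0}\subset\nu_*\cO_C$. Since everything in sight is a sheaf on the (reducible, nodal) curve $C_0$ and the maps $\nu_*\cF\to\bigoplus_x Q_x$, $\cE\hookrightarrow\nu_*\cF$ are exact, the obstruction to landing in $\cO_{C_0}$ is entirely local at the nodes: the quotient $(\nu_*\cO_C)/\cO_{C_0}$ is supported on $S$ with one-dimensional stalk at each $x\in S$, measuring the difference of the two values at $x_1$ and $x_2$. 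So I would first reduce to showing, for each fixed node $x\in S$, that the composite $\Sym^*(V\otimes\cE)^G\to\nu_*\Sym^*(V\otimes\cF)^G\to\nu_*\cO_C\to (\nu_*\cO_C/\cO_{C_0})_x$ vanishes. Concretely, writing $\cE$ locally as the kernel of $\cF|_{x_1}\oplus\cF|_{x_2}\to Q_x$, a section of $\Sym^*(V\otimes\cE)^G$ near $x$ restricts to an element of $\Sym^*(V\otimes\ker q_x)^G$, and the two maps to $\cO_C$ (at $x_1$ and at $x_2$) are obtained by composing with $\pr_i:\ker q_x\hookrightarrow\cF|_{x_1}\oplus\cF|_{x_2}\to\cF|_{x_i}$ followed by $\hhat\sigma_{x_i}:\cF|_{x_i}\to V^*$. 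The descending condition is then exactly that these two evaluations agree.

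Next I would translate "$\Sym^*$ of a tensor product, restricted to $G$-invariants" into Schur functors. The key representation-theoretic input is the decomposition
$$\Sym^d(V\otimes M)=\bigoplus_{|\lambda|=d}S^\lambda(V)\otimes S^\lambda(M)$$
(Cauchy's formula), natural in the vector space $M$ and $\GL(V)\times\GL(M)$-equivariant, so that taking $G$-invariants in the $V$-factor gives $\Sym^d(V\otimes M)^G=\bigoplus_{|\lambda|=d}(S^\lambda(V)^G)\otimes S^\lambda(M)$, i.e. dually a sum of pieces $(S^\lambda(V)^G)^*\otimes S^\lambda(M)^*$. Under this identification, a $G$-invariant polynomial of multidegree $\lambda$ on $V\otimes M$ evaluated via a linear map $M\to V^*$ (here $\hhat\sigma_{x_i}$, after the appropriate identification $M=\cF|_{x_i}$) is precisely the composite $S^\lambda(M)\xrightarrow{S^\lambda(\hhat\sigma_{x_i})}S^\lambda(V^*)=S^\lambda(V)^*\twoheadrightarrow (S^\lambda(V)^G)^*$. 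Therefore the equality of the two evaluations on $\Sym^*(V\otimes\ker q_x)^G$, for all degrees, is literally the statement that the map $(\ref{descent})$ vanishes for every partition $\lambda$. This gives the "if and only if."

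For the last assertion, suppose all $q_{x_i}$ are isomorphisms. Then $\pr_2\circ(\pr_1)^{-1}$ makes sense on $\ker q_x$... more precisely $-q_{x_2}^{-1}q_{x_1}$ identifies $\cF|_{x_1}\xrightarrow{\sim}\cF|_{x_2}$ and $\ker q_x$ is the graph of this identification, so $S^\lambda(\pr_1)$ is an isomorphism and $(\ref{descent})$ vanishes iff $S^\lambda(\hhat\sigma_{x_1})=S^\lambda(\hhat\sigma_{x_2})\circ S^\lambda(-q_{x_2}^{-1}q_{x_1})$ after projecting to $(S^\lambda(V)^G)^*$, equivalently iff $S^\lambda(g)$ fixes $S^\lambda(V)^G$ for all $\lambda$, where $g:=-\hhat\sigma_{x_2}q_{x_2}^{-1}q_{x_1}\hhat\sigma_{x_1}^{-1}\in\GL(V^*)$. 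The point is then that an element $g\in\GL(V^*)$ lies in $G$ iff $g$ fixes every $G$-invariant in $S^\lambda(V)$ for every $\lambda$: this is a standard Tannakian/first-fundamental-theorem statement — $G$ is the stabilizer in $\GL(V^*)$ of the collection of all tensors in $\bigoplus_\lambda S^\lambda(V)^G$, because $G$ is reductive (indeed semisimple) and hence cut out inside $\GL(V)$ by its invariant tensors in mixed tensor powers, and one reduces to Schur functors of $V$ alone since $V$ is self-dual up to the chosen bilinear form (or, without that, one includes $S^\lambda$ of both $V$ and $V^*$; the statement as written uses the representations appearing in $\Sym^*(V\otimes\cF)^G$, which suffices).

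The main obstacle I anticipate is bookkeeping rather than conceptual: being careful about (i) the identification $\cF|_{x_i}$ versus its dual and the precise form of $\hhat\sigma_{x_i}\in\Hom(\cF|_{x_i},V^*)$, including the sign in the gluing function, and (ii) justifying that the natural map $\Sym^*(V\otimes\cE)\to\Sym^*(\nu_*(V\otimes\cF))\to\nu_*\Sym^*(V\otimes\cF)$ really does, on $G$-invariants and locally at a node, factor through $\Sym^*(V\otimes\ker q_x)^G$ with the two evaluations being exactly $S^\lambda(\hhat\sigma_{x_i})$ — this requires knowing the local description of $\cE$ in terms of the Bhosle data (the exact sequence $0\to\cE\to\nu_*\cF\to\bigoplus Q_x\to 0$) tightly enough to read off the stalk at $x$. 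Neither should be serious; once the Cauchy decomposition is in place the equivalence is essentially formal, and the $G\subset\GL(V^*)$ characterization is the classical invariant-theoretic fact.
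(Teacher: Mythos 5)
Your proposal is correct and follows essentially the same route as the paper: reduce the descent condition to the fibers over the nodes, identify the relevant image with $\Sym^*(V\otimes\ker q_x)^G$, and apply the Cauchy decomposition $\Sym^*(V\otimes M)\cong\bigoplus_\lambda S^\lambda(V)\otimes S^\lambda(M)$, so that when all $q_{x_i}$ are isomorphisms the condition becomes that $g=-\hhat{\sigma}_{x_2}q_{x_2}^{-1}q_{x_1}\hhat{\sigma}_{x_1}^{-1}$ fixes every invariant in every $S^\lambda(V)^G$. For the final step the paper concludes via $\bbar{g}=\bbar{e}$ in the affine quotient $\GL(V^*)/G$ (invariant functions separate cosets since $G$ is reductive), while you invoke the equivalent fact that a reductive $G$ is the full stabilizer of its invariant tensors --- the same reductivity input in a slightly different packaging.
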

\begin{proof} Recall that $\cO_{C_0}$ is the kernel of the map
$$\nu_*\cO_C\to(\nu_*\cO_C)|_x=\bigoplus_{x\in S}k(x_1)\oplus k(x_2)\to\bigoplus_{x\in S}k(x),$$
where the last map is given by $(a_1,a_2)\mapsto a_1-a_2$ in each summand. So, it suffices to check the descending $G$-bundle condition at the fiber over each node. The image of $\Sym^*(V\otimes\cE)$ in $\nu_*\Sym^*(V\otimes\cF)|_x=\Sym^*(V\otimes\cF|_{x_1})\oplus\Sym^*(V\otimes\cF|_{x_2})$ is the same as the image of
$$(\Sym^*(\pr_1),\Sym^*(\pr_2)):\Sym^*(V\otimes\ker q_x)\to\Sym^*(V\otimes\cF|_{x_1})\oplus\Sym^*(V\otimes\cF|_{x_2}),$$
so the requirement for descent is that the composition
\begin{equation}\label{descent2}\Sym^*(V\otimes\ker q_x)^G\to\Sym^*(V\otimes \cF|_{x_1})^G\oplus\Sym^*(V\otimes \cF|_{x_2})^G\to k(x_1)\oplus k(x_2)\to k(x)\end{equation}
is zero for all nodes $x\in S$, where the first and last map are as above and the middle map is $(\nu_*\sigma)|_x=(\sigma|_{x_1},\sigma|_{x_2}).$ The first part of the proposition then follows from the decomposition
$$\Sym^*(V\otimes M)\cong\bigoplus_{\lambda\vdash d,\atop{d\geq 0}}S^\lambda(V)\otimes S^\lambda(M).$$
for any vector space $M$.

For the second assertion, let
$$g=-\hhat{\sigma}_{x_2}q_{x_2}^{-1}q_{x_1}\hhat{\sigma}_{x_1}^{-1}\in\GL(V^*),$$
so that
$$\ker q=\{(v^*,gv^*):v^*\in V^*\}$$
as a subspace of $V^*\oplus V^*$ (using the $\hhat{\sigma}_{x_i}$ to identify $\cF|_{x_i}$ with $V^*$). By the first part of the proposition, the descent requirement becomes
$$w^*(gw)=w^*(w)$$
for all $w\in S^\lambda(V)^G$, $w^*\in S^\lambda(V)^*$. This is the same as saying $f(g)=f(e)$ for every $G$-invariant function $f$ on $\GL(V^*)$, or rather $\bbar{g}=\bbar{e}$ as points of $\GL(V^*)/G$ (note that $\GL(V^*)/G$ is affine since $G$ is reductive).\end{proof}

\begin{remark} The proposition is actually quite usable. As shown in the proof, the requirement for descent is that a certain algebra homomorphism
$$\Sym^*(V\otimes\ker q_x)^G\to\CC\oplus\CC$$
has image contained in the diagonal subalgebra of $\CC\oplus\CC$. So, it suffices to check (\ref{descent}) only in degrees $\lambda$ containing algebra generators for $\CC[\End(V)]^G=\CC[\End(V)\sslash G]$, where $G$ acts by $g\cdot f=f\circ g^{-1}$ on $\End(V)$. Proposition \ref{dgb} can then be used to show that, for the following classical groups $G\subset\GL(V)$, a descending $G$-bundle consists of a $G$-bundle $(\cF,\sigma)$ with a Bhosle structure $\vec{q}$ satisfying the following conditions.
\begin{enumerate}
\item For $G=\SL(V)$, we only need $\kappa_{x_1}=(-1)^r\kappa_{x_2}$.
\item If $V$ is a symplectic space and $G=\Sp(V)$, then the reduction of structure group $\hhat{\sigma}$ gives a symplectic form $\psi:\wedge^2\cF\to\cO_C$, and the descending $G$-bundle condition becomes: $\ker q_x$ is isotropic for the symplectic form
$$\langle(v_1,v_2),(w_1,w_2)\rangle=\psi_{x_1}(v_1,w_1)-\psi_{x_2}(v_2,w_2)$$
on $\cF|_{x_1}\oplus\cF|_{x_2}$.
\item If $V$ is a quadratic space and $G=\SO(V)$, then the condition is the same as for symplectic groups, with the addition that $\kappa_{x_1}=(-1)^r\kappa_{x_2}$.
\end{enumerate}
\end{remark}

\subsection{Lifting one-parameter families to the Bhosle stack}\label{liftsubsection}

Next we will show that any family of singular $G$-bundles given by a map $f$ as in section \ref{partioverview} lifts to a family of descending $G$-bundles.

\begin{proposition}\label{lift} Let $T$ be a smooth curve with a closed point $0\in T$, and let $(\cE,\tau)$ be a family of singular $G$-bundles on $C_0\times T$ that gives a $G$-bundle on the complement of $S\times 0$. Then $(\cE,\tau)$ is induced by a family of descending $G$-bundles $(\cF,\sigma,\vec{q})$ on $C\times T$.\end{proposition}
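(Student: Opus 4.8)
The plan is to reconstruct the descending $G$-bundle over the family $T$ from the torsion-free sheaf $\cE$ and its reduction $\tau$, exploiting that over the punctured curve $T\setminus 0$ we already have an honest $G$-bundle. First I would recall that, by \cite{sun} lemma 2.1 (in its relative form), the flat family of torsion-free sheaves $\cE$ on $C_0\times T$ is induced by a \emph{Bhosle family} $(\cF,\vec q)$ on $C\times T$: one takes $\cF$ to be (a modification of) $\nu^*\cE/\text{torsion}$, or more precisely one uses that $\nu$ is finite so $\nu^*$ followed by quotienting the torsion along $C\times 0$ produces a vector bundle $\cF$ of rank $r$ flat over $T$, together with the quotients $q_x:\cF|_{x_1\times T}\oplus\cF|_{x_2\times T}\to Q_x$ fitting into $0\to\cE\to\nu_*\cF\to\bigoplus_x(i_x)_*Q_x\to 0$. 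Since $T$ is a smooth curve and $\cE$ is flat, one has to check that this construction can be done in families over $T$, not just fiberwise; this is where some care is needed but it is essentially in the cited references (and \cite{schspgb}, section 4.3, for the $G$-bundle version).

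Next I would produce the section $\sigma$, i.e. the reduction of structure group making $\cF$ a $G$-bundle on $C\times T$. The idea is: $\tau:\Sym^*(V\otimes\cE)^G\to\cO_{C_0\times T}$ gives, after composing with $\Sym^*(V\otimes\cE)\to\Sym^*(\nu_*(V\otimes\cF))\to\nu_*\Sym^*(V\otimes\cF)$ and restricting, a homomorphism landing in $\nu_*\cO_{C\times T}$; away from $S\times T$ the map $\nu$ is an isomorphism so this is literally a section $\sigma$ of $\Hom(\cF,V^*\otimes\cO)\sslash G$ there, and moreover over $C_0\times(T\setminus 0)$ we are told $(\cE,\tau)$ is a $G$-bundle, so $\sigma$ takes values in $\Isom(\cF,V^*)/G$ over $(C\setminus \nu^{-1}(S))\times T \cup C\times(T\setminus 0)$ — the complement of a codimension-two locus inside the smooth variety $C\times T$. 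Since $C\times T$ is smooth of dimension $2$ (hence normal) and the target $\Hom(\cF,V^*\otimes\cO)\sslash G\to C\times T$ is affine, a section defined off codimension $\geq 2$ extends uniquely: this is the key normality/Hartogs-type argument, and it is what lets us define $\sigma$ over all of $C\times T$. One then checks $(\cF,\sigma)$ is still a $G$-bundle over all of $C\times T$ by the same reasoning applied to the open condition ``lies in $\Isom/G$'' (using that $\cF$ is a vector bundle and $G$-invariance of the relevant functions, cf. the argument in proposition \ref{goodv}): the reduction takes isomorphism values off codimension two, and by normality it does so everywhere, or at least we land in a legitimate $G$-bundle structure after this extension.

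Finally I would verify that the triple $(\cF,\sigma,\vec q)$ so constructed is actually a \emph{descending} $G$-bundle, i.e. satisfies the condition of definition \ref{sbb} (equivalently proposition \ref{dgb}): the image of $\Sym^*(V\otimes\cE)^G\to\nu_*\cO_{C\times T}$ lands in $\cO_{C_0\times T}$. But this is immediate from the construction, because that image \emph{is} the original $\tau$, which by hypothesis is a homomorphism to $\cO_{C_0\times T}$; the content is only that $\sigma$ restricted over $C$ is compatible with $\tau$, which holds over the complement of the codimension-two node locus and hence everywhere by the uniqueness of the extension. One also has to confirm that $(\cF,\sigma,\vec q)$ induces back the original family $(\cE,\tau)$, which follows from the defining exact sequence and the uniqueness in proposition \ref{bhisos} (relative version). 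The main obstacle, as I see it, is the careful relative (over $T$) bookkeeping: making sure the Bhosle bundle $(\cF,\vec q)$ exists as a \emph{family} flat over $T$ with the $Q_x$ locally free over $T$, and that the codimension-two extension arguments are valid in the family — once $\cF$ is a genuine vector bundle on the smooth surface $C\times T$, the extension of $\sigma$ is the clean part, but getting $\cF$ itself to behave well in the family (in particular that $\nu^*\cE$ modulo its torsion is flat over $T$ of the right rank) requires knowing the fiberwise Bhosle data varies nicely, for which I would lean on the cited constructions of Sun and Schmitt.
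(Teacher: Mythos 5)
Your outline diverges from the paper's proof mainly in how $(\cF,\vec q)$ is produced, and that is where the genuine gap sits. You take for granted a \emph{relative} Bhosle description of $\cE$: but the cited result (\cite{sun}, lemma 2.1) is a fiberwise statement for a single torsion-free sheaf on a nodal curve, and your candidate $\cF=\nu^*\cE/T(\nu^*\cE)$ is only torsion-free on the surface $C\times T$; torsion-free sheaves on a surface need not be locally free, and the possible failure is exactly at the finitely many points $\nu^{-1}(S)\times\{0\}$ where all the trouble lives. The paper instead sets $\cF=(\nu^*\cE)^{\vee\vee}$, which \emph{is} locally free on the smooth surface by \cite{hart80}, corollary 1.4, and obtains $\vec q$ by taking the quotients coming from the $G$-bundle over $T-\{0\}$ and extending across $t=0$ by properness of the Grassmannian (the descent condition of proposition \ref{dgb} being closed). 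The price of that construction is that the induced family $(\cE',\tau')$ is not tautologically equal to $(\cE,\tau)$, and the paper closes this by noting both $\cE$ and $\cE'$ are flat families of depth-one sheaves over a smooth curve, hence S2 on $C_0\times T$ (\cite{EGA}, 6.3.1), so each equals its pushforward from $U'=C_0\times T-S\times 0$, where they agree (and similarly for $\tau$ via $\Hom(-,\cO)\subseteq\Hom(-,j_*\cO_{U'})$). Your appeal to proposition \ref{bhisos} cannot substitute for this step: that proposition only treats descending $G$-bundles whose induced objects are honest $G$-bundles on all of $C_0$, whereas the comparison is needed precisely along $S\times 0$, where $(\cE,\tau)$ is merely a singular $G$-bundle.

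A second, smaller soft spot: your claim that $(\cF,\sigma)$ is a genuine $G$-bundle because it "takes isomorphism values off codimension two, and by normality it does so everywhere" is not a valid inference. Normality (Hartogs) extends the section $\sigma$ of the affine morphism $\ul{\Hom}(\cF,V^*\otimes\cO)\sslash G\to C\times T$ across the codimension-two locus -- that part of your argument is fine and is exactly the paper's -- but it does not propagate the open condition of landing in $\ul{\Isom}(\cF,V^*\otimes\cO)/G$. The correct argument, as in the paper, is that the degeneracy locus is the divisor where $\det\hhat{\sigma}:\det\cF\to\cO$ vanishes; a divisor cannot be contained in the codimension-two set $\nu^{-1}(S)\times\{0\}$ unless it is empty. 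With that fix, and with your step 1 replaced by the double-dual construction plus the S2 comparison above, your outline becomes the paper's proof.
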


\begin{proof} Let $\cF=(\nu^*\cE)^{\vee\vee}$, and let $j:U\hookrightarrow C\times T$ be the inclusion of $U=C\times T-\nu^{-1}(S)\times T$. Note that $\cF$ is locally free by \cite{hart80}, corollary 1.4, and satisfies $j^*\cF\cong j^*\nu^*\cE$. Since $C\times T$ is normal, we have $j_*j^*\cO_{C\times T}\cong\cO_{C\times T}$, hence $j^*\nu^*\tau$ extends to an algebra homomorphism $\sigma:\Sym^*(V\otimes\cF)^G\to\cO_{C\times T}$. 

Since $\sigma$ is nondegenerate in codimension one (nondegenerate meaning $\hhat{\sigma}$ lands in $\ul{\Isom}(\cF,V^*\otimes\cO_C)/G$), it follows that $\sigma$ is nondegenerate everywhere, because the degeneracy locus is the divisor where $\det\hhat{\sigma}:\det\cF\to\cO$ vanishes. Thus, $(\cF,\sigma)$ is a $G$-bundle. To get the quotient maps $\vec{q}$, just take the ones we get from pulling back $\cE$ over $T-\{0\}$ and extend to $t=0$ using properness of Grassmannians (the condition on $\ker q_x$ given by proposition \ref{dgb} will continue to hold at $t=0$ since it is defined by the vanishing of a map of vector bundles on $T$).

Now, $(\cF,\sigma,\vec{q})$ induces a family of torsion-free singular $G$-bundles $(\cE',\tau')$ on $C_0$ which agrees with the original family over $U'=C_0\times T-S\times 0$. Both $\cE$ and $\cE'$ are flat families of depth 1 sheaves parametrized by a smooth curve, so they are S2 sheaves on $C_0\times T$ by \cite{EGA}, 6.3.1. Hence both equal their pushforward from $U'$. To see $\tau=\tau'$, note that the inclusion $\cO_{C_0\times T}\subseteq j_*\cO_{U'}$ gives an inclusion
$$\Hom_{C_0\times T}(\cM,\cO_{C_0\times T})\subseteq\Hom_{C_0\times T}(\cM,j_*\cO_{U'})=\Hom_{U'}(j^*\cM,\cO_{U'}).$$
for any sheaf $\cM$. Since $\tau=\tau'$ over $U'$, the two families coincide and the proposition is proved.\end{proof}

\subsection{Setup for pole calculation}

Recall that $A=k[[t]]$, $K=k((t))$. Let $\Bh_G(\nu)$ be the stack of descending $G$-bundles, and $\Bh_G^0(\nu)$ the open substack where all $q_{x_i}$ are isomorphisms. Let $\pi:\Bh_G(\nu)\to\SB_G^*(C_0)$ be the natural projection, which restricts to an isomorphism 
$$\Bh_G^0(\nu)\xrightarrow{\sim}\Bun_G(C_0).$$ 
By proposition \ref{lift}, a map $f:\Spec A\to\SB_G^0(C_0)$ as in section \ref{partioverview} lifts to a map $\ttilde{f}:\Spec A\to\Bh_G(\nu)$ sending $\Spec K$ into $\Bh^0_G(\nu)$. Thus, to prove theorem \ref{sectionextensiontheorem} it suffices to show the following.

\begin{situation}\label{situation'} For any map $\ttilde{f}:\Spec A\to\Bh_G(\nu)$ sending $\Spec K$ into $\Bh^0_G(\nu)$ and any section $s$ in $\mH^0(\Bh^0_G(\nu),\pi^*\cD^l)=\mH^0(\Bun_G(C_0),\cD^l)$, we must show $\ttilde{f}^*s$ has no pole at $t=0$.\end{situation}

In order to compute the pole of $\ttilde{f}^*s$, we will use the following factorization of $\mH^0(\Bun_G(C_0),\cD^l)$. Let $\scr{E}$ be the universal family of $G$-bundles parametrized by $\Bun_G(C)$, and for a dominant integral weight $\lambda$ of $G$ and point $x\in C$ let
$$\scr{E}_x^\lambda=\scr{E}_x\times^G V^\lambda,$$
where $V^\lambda$ is the irreducible representation of $G$ with highest weight $\lambda$.

\begin{lemma}\label{lambdadecomp}\emph{(\cite{belgib}, lemma 6.4)} Let $d_V$ be the Dynkin index of the $G$-module $V$ (see \cite{knr}). Then the pullback map $p:\Bun_G(C_0)\to\Bun_G(C)$ induces an isomorphism
\begin{equation}\label{lambdaeqn}\bigoplus_{\lambda}\mH^0(\Bun_G(C),\cD^l\otimes\bigotimes_{x\in S}\scr{E}_{x_1}^{\lambda_x}\otimes\scr{E}_{x_2}^{\lambda_x^*})\xrightarrow{\sim}\mH^0(\Bun_G(C_0),\cD^l),\end{equation}
where the sum is over all functions $\lambda$ assigning a dominant integral weight $\lambda_x$ of level $\leq ld_V$ to each node $x\in S$, and $\lambda_x^*$ denotes the highest weight of $(V^{\lambda_x})^*$.\end{lemma}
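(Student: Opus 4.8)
The plan is to realize $\mH^0(\Bun_G(C_0),\cD^l)$ as the space of $\cD^l$-sections on $\Bun_G(C)$ that are invariant under the gluing data at the nodes, and then decompose the gluing constraint representation-theoretically. First I would recall that a $G$-bundle on $C_0$ is the same as a $G$-bundle $\cF$ on $C$ together with an identification of fibers $\cF|_{x_1}\xrightarrow{\sim}\cF|_{x_2}$ for each node $x\in S$ (this is the locally free case of the Bhosle picture from section \ref{bhbundles}). Concretely, there is a cartesian-type description exhibiting $\Bun_G(C_0)$ as the quotient stack
$$\Bun_G(C_0)\cong\left[\,\Bun_G(C)\times\prod_{x\in S}G\,\big/\,\prod_{x\in S}(G\times G)\,\right],$$
where the evaluation maps $\ev_{x_i}:\Bun_G(C)\to[\,\ast/G\,]$ (sending $\cF$ to the $G$-torsor $\cF|_{x_i}$) are used to form the fiber product, and the $\prod_{x\in S}G$ factor records the gluing isomorphisms with the two $G$'s acting by left/right translation. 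The pullback map $p:\Bun_G(C_0)\to\Bun_G(C)$ is the evident projection. From this presentation, $\mH^0(\Bun_G(C_0),\cD^l)$ computes as the $\prod_{x\in S}(G\times G)$-invariants of $\mH^0\big(\Bun_G(C),\,\cD^l\otimes\bigotimes_{x\in S}\cO(G)\big)$, where each copy of the coordinate ring $\cO(G)$ carries its bi-regular $(G\times G)$-action.

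The key step is then the Peter--Weyl decomposition $\cO(G)\cong\bigoplus_{\lambda}V^\lambda\otimes(V^\lambda)^*$ as a $(G\times G)$-module, the sum over all dominant integral weights $\lambda$. Substituting this in and taking invariants, the $G$-action at $x_i$ on the factor $V^{\lambda_x}$ (resp. $(V^{\lambda_x})^*$) must be absorbed by the evaluation bundle $\scr{E}_{x_i}^{?}$; since $\cD$ is built from $V=W\oplus W^*$ (or the standard representation), the natural pairing forces the identification $\mH^0\big(\Bun_G(C),\cD^l\otimes\bigotimes_x\scr{E}_{x_1}^{\lambda_x}\otimes\scr{E}_{x_2}^{\lambda_x^*}\big)$ after one takes $G\times G$-invariants against $V^{\lambda_x}\otimes(V^{\lambda_x})^*$ at each node. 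This is where I invoke \cite{belgib}, lemma 6.4 essentially verbatim; the only thing to check beyond their statement is that nothing changes when $C_0$ is reducible, i.e. that the node-by-node analysis is genuinely local and the weights at distinct nodes are independent — this is immediate from the product structure $\prod_{x\in S}$ above.

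The one substantive point — and the main obstacle — is the \emph{level bound} $\lambda_x$ of level $\leq ld_V$: a priori the Peter--Weyl sum is infinite, and one must show that the summands with $\mathrm{level}(\lambda_x)>ld_V$ contribute zero. This is precisely the statement that $\mH^0(\Bun_G(C),\cD^l\otimes\scr{E}_{x_1}^{\lambda}\otimes\scr{E}_{x_2}^{\mu})=0$ once $\lambda$ (or $\mu$) exceeds level $ld_V$, which follows from the identification of these spaces with spaces of generalized theta functions / conformal blocks of level $ld_V$ (via \cite{knr}, \cite{laszlo-sorger}, \cite{beauville-laszlo}): a conformal block at level $c$ attached to weights must have each weight in the level-$c$ alcove, else the space is zero. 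So the proof reduces to: (i) the quotient-stack presentation of $\Bun_G(C_0)$ and the resulting invariant-theoretic formula for $\mH^0$; (ii) Peter--Weyl; (iii) the vanishing of $\cD^l$-theta functions with an insertion above the critical level. Steps (i)--(ii) are formal; step (iii) is the real input and is exactly \cite{belgib}, lemma 6.4, whose proof goes through unchanged for reducible nodal $C_0$ since it is local at each node. Hence the lemma.
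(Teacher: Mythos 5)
The paper itself offers no proof of this lemma: it is quoted verbatim from \cite{belgib}, Lemma 6.4, and the only material added in the text is the explicit formula for the isomorphism in terms of transition elements $g_x$. So there is no internal argument to compare against; your sketch is a reconstruction of the proof behind the cited result, and in outline it is the right one: $p:\Bun_G(C_0)\to\Bun_G(C)$ is affine with fiber $\prod_{x\in S}\ul{\Isom}(\scr{E}_{x_1},\scr{E}_{x_2})$ over a bundle on $C$, the pushforward of the structure sheaf decomposes by Peter--Weyl into $\bigoplus_\lambda\bigotimes_{x\in S}\scr{E}_{x_1}^{\lambda_x}\otimes\scr{E}_{x_2}^{\lambda_x^*}$, and the level bound comes from vanishing of the summands with some $\lambda_x$ of level $>ld_V$. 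Since you end by invoking \cite{belgib}, Lemma 6.4 "essentially verbatim," your argument is, like the paper's, ultimately a deferral to that reference rather than an independent proof -- which is acceptable here, but worth being honest about.

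Three points in the sketch deserve tightening. First, the displayed quotient-stack presentation is loose: $\prod_{x\in S}(G\times G)$ does not act on $\Bun_G(C)$ itself; one should either pass to the torsor of trivializations of the fibers at $x_1,x_2$ or write $\Bun_G(C_0)$ as the fiber product of $\Bun_G(C)$ with $\prod_x[\ast/G]$ over $\prod_x[\ast/G]\times[\ast/G]$ (you gesture at this, but the formula as written is not correct). Second, you implicitly identify $\cD$ on $\Bun_G(C_0)$ with the pullback of $\cD$ from $\Bun_G(C)$; this requires the canonical trivialization of $\bigotimes_{x}\det\cE|_x$ coming from $G\subset\SL(V)$ (the paper's identity (\ref{dnue})), and your remark that "the natural pairing" on $V=W\oplus W^*$ forces the identification is not the relevant mechanism. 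Third, the vanishing above the level is the genuine content, and it is not quite the statement that conformal blocks with weights outside the alcove are zero: the identification of $\mH^0(\Bun_G(C),\cD^l\otimes\scr{E}_{x_1}^{\lambda}\otimes\scr{E}_{x_2}^{\mu})$ with dual conformal blocks (Laszlo--Sorger, Beauville--Laszlo, KNR) is stated for weights of level $\leq ld_V$, so for larger weights one needs a separate (short) vanishing argument -- e.g. via integrability/restriction to Hecke-modification lines -- which is exactly what the cited lemma supplies; one should also say a word about commuting the infinite direct sum past $\mH^0$ on the non-quasi-compact stack $\Bun_G(C)$. None of these is a fatal gap, but as written they are glossed.
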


The isomorphism in the lemma has the following formula. Let $E\to C_0$ be a $G$-bundle and $s$ a section in the $\lambda$-component on $\Bun_G(C)$. Picking trivializations of $F=\nu^*E$ at $x_1$ and $x_2$ for each $x\in S$ gives a collection of transition elements $g_x\in G$, and we may express $s|_F$ as a sum of terms $\alpha\otimes\bigotimes_{x\in S}(v_x\otimes v_x^*)$ with $\alpha\in\cD(\cE)^l,$ $v_x\in V^{\lambda_x},$ $v_x^*\in V^{\lambda_x^*},$ where $\cE=E\times^GV$. Then as a section on $\Bun_G(C_0)$, $s|_E$ is the corresponding sum of the terms $(\prod_{x\in S}v_x^*(g_xv_x))\alpha.$

\subsection{Pole calculation}\label{polesubsection}

We will resolve situation \ref{situation'} by bounding the pole of $\ttilde{f}^*s$ in proposition \ref{pole}. Before carrying out the pole calculation, let me point out the following two items. First, if $(\cF,\sigma,\vec{q})$ is a family of descending $G$-bundles given by a map $\ttilde{f}$ as in situation \ref{situation'}, then by proposition \ref{dgb} the gluing function $g_x=-\hhat{\sigma}_{x_2}q_{x_2}^{-1}q_{x_1}\hhat{\sigma}_{x_1}^{-1}$ is in $G(K)$ for each node $x\in S$. But, the element $g_x\in G(K)$ is only well-defined up to the left and right action of $G(A)$, as it depends on a choice of $G(A)$-coset representatives of the $\hhat{\sigma}_{x_i}\in\Isom(\cF|_{x_i},V^*\otimes A)/G_A$. Recall that the double cosets $G(A)\backslash G(K)/G(A)$ are parametrized by dominant one-parameter subgroups of a maximal torus of $G$, where an OPS $\varphi$ corresponds to the $K$-point
$$\gamma_\varphi:\Spec K\to\Spec\CC[t,t^{-1}]=\bG_m\xrightarrow{\varphi}G.$$
Thus, we may always put $\ttilde{f}$ into a ``normal form," i.e. pick coset representatives of the $\hhat{\sigma}_{x_i}$ such that, for each $x\in S$, we have $g_x=\gamma_{\varphi_x}$ for some dominant OPS $\varphi_x$.

We also have the following identities between $\cD$ and $\cD_{\Bh}$, where $\cD_{\Bh}$ is the determinant of cohomology line bundle on $\Bh_G(\nu)$. The exact sequence
\begin{equation}\label{sesbh}0\to\cE\to\nu_*\cF\to\bigoplus_{x\in S}Q_x\to 0\end{equation}
for a Bhosle bundle $(\cF,\vec{q})$ shows that there is an isomorphism
\begin{equation}\label{naturaldbh}\pi^*\cD\xrightarrow\sim\cD_\Bh\otimes\bigotimes_{x\in S}\det\cQ_x,\end{equation}
where $\cQ_x$ is the ``universal $Q_x$" vector bundle. If $(\cE,\tau)$ is a $G$-bundle on $C_0$ (not just a singular $G$-bundle), then the exact sequence
\begin{equation}\label{sesvb}0\to\cE\to\nu_*\nu^*\cE\to\bigoplus_{x\in S}\cE|_x\to 0\end{equation}
gives a canonical isomorphism
\begin{equation}\label{dnue}\cD(\cE)\cong\cD(\nu^*\cE)\otimes\bigotimes_{x\in S}\det\cE|_x\cong\cD(\nu^*\cE),\end{equation}
where the second map is given by $\det\hhat{\tau}:\det\cE\xrightarrow\sim\cO$. If $(\cE,\tau)$ is induced by a Bhosle bundle $(\cF,\sigma,\vec{q})$, then we have a natural isomorphism of exact sequences (\ref{sesbh}) and (\ref{sesvb}) by lemma \ref{bhisos}, and the two identities (\ref{naturaldbh}) and (\ref{dnue}) get related by the following lemma.

\begin{lemma}\label{nonsense} Let $B$ be a ring. For an exact sequence of finitely generated projective $B$-modules
$$0\to M_1\to M_2\to\cdots\to M_n\to 0,$$
the canonical isomorphism $\det M_\bullet\xrightarrow\sim B$ is functorial with respect to isomorphisms of exact sequences.\end{lemma}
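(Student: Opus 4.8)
The plan is to reduce the statement to the elementary fact that the determinant of a two-term exact sequence $0 \to M_1 \to M_2 \to 0$ — i.e. an isomorphism $M_1 \xrightarrow{\sim} M_2$ — is compatible with isomorphisms of such sequences, and then bootstrap to arbitrary length by dévissage. Recall that for a finitely generated projective $B$-module $M$ one sets $\det M = \bigwedge^{\rk M} M$ (locally on $\Spec B$, where the rank is constant; the construction glues), and for an exact sequence $M_\bullet$ as in the statement the canonical isomorphism $\det M_\bullet : \bigotimes_i (\det M_i)^{(-1)^i} \xrightarrow{\sim} B$ is the one obtained by splitting the sequence locally into short exact pieces and using the standard isomorphism $\det M' \otimes \det M'' \xrightarrow{\sim} \det M$ for a short exact sequence $0 \to M' \to M \to M'' \to 0$. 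The content of the lemma is that, given a commuting ladder with vertical isomorphisms $\phi_i : M_i \xrightarrow{\sim} M_i'$, the square
\[
\begin{tikzcd}
\bigotimes_i (\det M_i)^{(-1)^i} \arrow{r}{\det M_\bullet} \arrow{d}{\otimes_i (\det \phi_i)^{(-1)^i}} & B \arrow[equal]{d}\\
\bigotimes_i (\det M_i')^{(-1)^i} \arrow{r}{\det M_\bullet'} & B
\end{tikzcd}
\]
commutes.

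First I would handle the base case $n=2$: here $\det M_\bullet$ is by definition $\det \phi$ where $\phi : M_1 \to M_2$ is the (iso)morphism in the sequence, and the ladder condition says $\phi_2 \circ \phi = \phi' \circ \phi_1$, so $\det \phi_2 \circ \det \phi = \det \phi' \circ \det \phi_1$, which is exactly the assertion (after rearranging the duals). Next, for $n=3$, a short exact sequence $0 \to M_1 \to M_2 \to M_3 \to 0$, the claim is the functoriality of the standard isomorphism $\det M_1 \otimes \det M_3 \xrightarrow{\sim} \det M_2$ with respect to isomorphisms of short exact sequences; this is classical, but I would prove it by reducing to the case $B$ local (both sides are defined Zariski-locally and an isomorphism of sheaves is checked locally), then free, then to the split sequence, where it is the statement that $\bigwedge$ is a functor and the wedge of a block-triangular isomorphism is the product of the wedges of the diagonal blocks.

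For general $n$ I would induct, factoring the long exact sequence $0 \to M_1 \to M_2 \to \dots \to M_n \to 0$ through $N := \im(M_2 \to M_3) = \ker(M_3 \to M_4)$, which is finitely generated projective since the sequence is locally split. This breaks the ladder into a short exact sequence $0 \to M_1 \to M_2 \to N \to 0$, a copy of the same data on $N \to N'$, and a shorter exact sequence $0 \to N \to M_3 \to \dots \to M_n \to 0$; the vertical isomorphisms $\phi_i$ restrict to an isomorphism $\psi : N \xrightarrow{\sim} N'$ making everything commute. The canonical isomorphism $\det M_\bullet$ is by construction the composite of $\det$ of the short piece with $\det$ of the shorter piece (with $N$-terms cancelling), so functoriality follows from the $n=3$ case applied to the short piece together with the inductive hypothesis applied to the shorter piece. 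The only mildly delicate point is to check that the decomposition of $\det M_\bullet$ into these two factors, and the identification of $\psi$, are themselves independent of the local splittings used — but this is a compatibility that one verifies once for the definition of $\det M_\bullet$ and then reuses; it is the routine bookkeeping that I expect to be the main (and only real) obstacle, and it is entirely local so can be disposed of by passing to $B$ local and then free. I would remark that nothing here uses more than multilinear algebra, and in the application of the lemma in this paper one only needs the case of the two four-term sequences (\ref{sesbh}) and (\ref{sesvb}).
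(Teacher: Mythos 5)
Your proposal is correct and follows essentially the same route as the paper's own (very terse) proof: verify functoriality for a short exact sequence after reducing locally to the free, split case, and then handle arbitrary length by induction, splitting the long sequence at an image term. The extra details you supply (the $n=2$ base case, the induced isomorphism $\psi:N\xrightarrow{\sim}N'$, and the remark that only the four-term sequences (\ref{sesbh}) and (\ref{sesvb}) are needed here) are all consistent with the intended argument.
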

\begin{proof} We mean that, for any isomorphism $f_\bullet:M_\bullet\to N_\bullet$, the following diagram commutes:
\[\begin{tikzcd}
\det M_\bullet\arrow{dr}\arrow{r}{\det f_\bullet}&\det N_\bullet\arrow{d}\\
&B\end{tikzcd}\]
This is easy to check for a short exact sequence, and the general case can be done by induction.\end{proof}

Now we are ready for the pole calculation. In the following proposition, note that any free rank one $A$-module has a well-defined valuation function -- denoted ``$\ord$" -- given by picking an isomorphism to $A$ (the choice of isomorphism does not affect the valuation). Recall that $\kappa_{x_i}=\det q_{x_i}\circ(\det\hhat{\sigma}_{x_i})^{-1}$ for a descending $G$-bundle $(\cF,\sigma,\vec{q})$ and node $x\in S$. Let us also write $g_x=-\hhat{\sigma}_{x_2}q_{x_2}^{-1}q_{x_1}\hhat{\sigma}_{x_1}^{-1}$, and let $\varphi_x$ be the dominant coweights such that $\gamma_{\varphi_x}\in G(A)\bs G(K)/G(A)$ represents the double coset of $g_x$ (cf. the discussion at the beginning of the section).

\begin{proposition}\label{pole}Let $s$ be a section in the $\lambda$-component of $\mH^0(\Bun_G(C_0),\cD^l)$, where $\lambda$ is an assignment of a level $\leq ld_V$ dominant integral weight $\lambda_x$ to each node $x\in S$ (see lemma \ref{lambdadecomp}). Let $(\cF,\sigma,\vec{q})$ be a family of descending $G$-bundles given by a map $\ttilde{f}$ as in situation \ref{situation'}. Then $\ttilde{f}^*s$ vanishes to order at least
\begin{equation}\label{poleorder}\sum_{x\in S}(l\cdot\ord\kappa_{x_1}+w_0\lambda_x(\varphi_x))\end{equation}
at $t=0$, where $w_0$ is the longest element of the Weyl group of $G$.\end{proposition}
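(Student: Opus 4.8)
The plan is to compute $\tilde{f}^*s$ fiberwise at $t=0$ by working through the factorization of Lemma~\ref{lambdadecomp} and tracking valuations term by term. First I would reduce to the normal form discussed at the start of the section: by choosing $G(A)$-coset representatives for the $\hhat{\sigma}_{x_i}$, we may assume $g_x = \gamma_{\varphi_x}$ for a dominant coweight $\varphi_x$ on $\Spec K$, so that $g_x$ acts on $V^{\lambda_x}$ (via the induced representation) as $\diag(t^{a_1},\dots,t^{a_N})$ in a suitable weight basis, with the minimal exponent being $w_0\lambda_x(\varphi_x)$ (the lowest weight of $V^{\lambda_x}$ paired against $\varphi_x$). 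The section $s$, being in the $\lambda$-component on $\Bun_G(C)$, pulls back under $\tilde{f}$ to a section of $\cD^l\otimes\bigotimes_x\scr{E}_{x_1}^{\lambda_x}\otimes\scr{E}_{x_2}^{\lambda_x^*}$ over $\Spec A$; by the explicit formula following Lemma~\ref{lambdadecomp}, the value of $\tilde{f}^*s$ on $\Bun_G(C_0)$ is obtained by contracting with the $g_x$, i.e.\ it is $\prod_{x\in S} v_x^*(g_x v_x)$ times the $\cD^l$-part, summed over terms.

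Next I would separate the two contributions in \eqref{poleorder}. The $\cD^l$-part: using the isomorphism \eqref{naturaldbh}, $\pi^*\cD \cong \cD_\Bh\otimes\bigotimes_x\det\cQ_x$, and over $\Spec A$ the restriction of $s$ to $\Bh_G(\nu)$ extends (the descending $G$-bundle $(\cF,\sigma,\vec{q})$ is a genuine family over all of $\Spec A$, so $\cD_\Bh$-sections have no pole there). The pole is therefore controlled entirely by the transition between $\cD_\Bh$ and $\pi^*\cD$, which by \eqref{naturaldbh} is $\det\cQ_x$ at each node; and by \eqref{dnue} together with Lemma~\ref{nonsense}, the discrepancy between the vector-bundle trivialization of $\det\cE|_x$ over $K$ and the Bhosle trivialization at $t=0$ is measured precisely by $\kappa_{x_i} = \det q_{x_i}\circ(\det\hhat{\sigma}_{x_i})^{-1}$. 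Raising to the $l$-th power gives the $l\cdot\ord\kappa_{x_1}$ summand (and one checks $\ord\kappa_{x_1}$, not $\kappa_{x_2}$, is the relevant one because of the normal-form choice collapsing the other side into $G(A)$, or rather the two differ by $\ord\det\gamma_{\varphi_x}$ which vanishes as $G$ is semisimple). The weight contribution: in each term $v_x^*(g_xv_x)$ with $g_x$ in normal form, the lowest-order surviving monomial contributes valuation at least $w_0\lambda_x(\varphi_x)$, and these are independent across nodes, so the product picks up $\sum_x w_0\lambda_x(\varphi_x)$. Adding the two gives the bound \eqref{poleorder}.

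The main obstacle I anticipate is bookkeeping the interaction between the two sources of poles — making sure that the $\kappa_{x_1}$ contribution and the $w_0\lambda_x(\varphi_x)$ contribution are genuinely additive and that no cancellation or double-counting occurs when one passes from the line bundle identities \eqref{naturaldbh}, \eqref{dnue} on the $\cD^l$-factor to the weight pairing on the $\scr{E}_{x_i}^{\lambda_x}$-factors. Concretely, the subtlety is that the normal-form choice for $g_x$ is made using $\hhat{\sigma}_{x_i}$, which is exactly the same data entering $\kappa_{x_i}$, so one must verify the two valuation computations are done with consistent trivializations; this is where Lemma~\ref{nonsense} does the real work, guaranteeing that the canonical trivialization of the determinant of \eqref{sesbh}/\eqref{sesvb} is compatible with the chosen isomorphisms. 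Once that compatibility is pinned down, the estimate is just a sum of local contributions at the nodes, each of which is an elementary valuation computation for a one-parameter subgroup acting on an irreducible representation.
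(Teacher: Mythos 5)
Your proposal is correct and follows essentially the same route as the paper: put the gluing elements in normal form $\gamma_{\varphi_x}$, use the factorization of Lemma~\ref{lambdadecomp}, track the transfer of the $\cD^l$-factor through the identities (\ref{naturaldbh}) and (\ref{dnue}) via Lemma~\ref{nonsense} to extract the $\kappa_{x_1}^l$ contribution, and bound the pairing $v_x^*(\gamma_{\varphi_x}v_x)$ by the lowest weight $w_0\lambda_x(\varphi_x)$. The compatibility of trivializations you flag as the main subtlety is exactly what the paper's commutative diagram (built from (\ref{sesbh}), (\ref{sesvb}), and (\ref{exq})) verifies.
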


\begin{proof} Let $(\cE,\tau)$ be the family of singular $G$-bundles on $C_0$ induced by $(\cF,\sigma,\vec{q})$. As a section on $\Bun_G(C)$, we have (using lemma \ref{lambdadecomp})
$$s|_{(\cF,\sigma)}=\beta\otimes\bigotimes_{x\in S}v_x\otimes v_x^*$$
for some $\beta\in\cD(\cF)^l$, $v_x\in V^{\lambda_x}\otimes A,$ $v_x^*\in V^{\lambda_x^*}\otimes A$. We need to transfer $\beta$ back to $\cD(\cE_K)^l$ using isomorphism (\ref{dnue}), then apply isomorphism (\ref{naturaldbh}) to get an element of $D(\cF_K)^l\otimes\bigotimes_{x\in S}(\det Q_x)^l.$ As $\nu^*\cE_K$ and $\cF_K$ are isomorphic Bhosle bundles, there is an induced isomorphism of exact sequences (\ref{sesbh}) and (\ref{sesvb}). The isomorphism $\cE|_{x\times K}\to Q_x\otimes K$ is the composition
\begin{equation}\label{exq}\cE|_{x\times K}\xrightarrow\sim\nu^*\cE|_{x_1\times K}\xrightarrow\sim\cF|_{x_1\times K}\xrightarrow{q_{x_1}}Q_x\otimes K.\end{equation} 
Consider the diagram
\[
\begin{tikzcd}
\cD(\nu^*\cE_K) \arrow{r}{(\det\hhat{\tau})^{-1}}\arrow{d} & \cD(\nu^*\cE_K)\otimes\bigotimes_{x\in S}\det\cE|_{x\times K} \arrow{r}{\text{(\ref{sesvb})}} \arrow{d} & \cD(\cE_K)\arrow{d}{\text{(\ref{sesbh})}}\\
\cD(\cF_K)\arrow{r}{(\det\hhat{\sigma})^{-1}}&\cD(\cF_K)\otimes\bigotimes_{x\in S}\det\cF|_{x_1\times K}\arrow{r}{\det q_{x_1}}&\cD(\cF_K)\otimes\bigotimes_{x\in S}\det Q_x.
\end{tikzcd}
\]
The right-hand square is commutative by lemma \ref{nonsense} and equation (\ref{exq}), and the left-hand square is commutative because the isomorphism $\nu^*\cE_K\to\cF_K$ respects the singular $G$-bundle structures. Hence, the diagram commutes. The ``canonical route" (given by lemma \ref{lambdadecomp} and isomorphism (\ref{naturaldbh})) to transfer $s$ from $\Bun_G(C)$ to $\Bh_G(\nu)$ is to move the element $\beta\in\cD(\cF_K)^l\cong\cD(\nu^*\cE_K)^l$ along the top row and down the last column. This is the same as the map along the bottom row, which is just tensoring with $\bigotimes_{x\in S}\kappa_{x_1}$. Thus,
$$s|_{(\cF,\sigma,\vec{q})}=(\prod_{x\in S}v_x^*(\gamma_{\varphi_x}v_x))\beta\otimes\bigotimes_{x\in S}\kappa_{x_1}^l$$
as a rational section on $\Bh_G(\nu)$. Writing $v$ and $v^*$ as sums of weight vectors, we see that the order of $v^*(\gamma_\varphi v)$ at $t=0$ is at least $w_0\lambda(\varphi)$, because $w_0\lambda$ is the lowest weight of $V^\lambda$. This proves the proposition.\end{proof}

\subsection{Conclusion of proof of theorem \ref{sectionextensiontheorem}}

To conclude the proof of theorem \ref{sectionextensiontheorem}, we have to show that the quantity appearing in equation (\ref{poleorder}) is nonnegative for type A and C groups. This shows that the section $\ttilde{f}^*s$ from situation \ref{situation'} has no pole at $t=0$. For this we need a simple lemma.

\begin{lemma}\label{alpha_k} Let $M,N$ be free $A$-modules of the same rank, and $q:M\oplus M\to N$ a surjective $A$-module map such that $q_1,q_2$ are isomorphisms over $K$. Suppose there is an $A$-basis $\{e_i\}$ of $M$ with respect to which $q_2^{-1}q_1$ is a diagonal matrix $\diag(t^{a_1},\dots,t^{a_n})$. Then $\ord\det q_1$ is the sum of the $a_i$ which are nonnegative.\end{lemma}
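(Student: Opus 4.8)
The plan is to choose $A$-bases of $M$ and $N$ that make $q_1$ diagonal, and then read off $\ord\det q_1$ directly. Write $n$ for the common rank and set $I_+=\{i:a_i\geq 0\}$ and $I_-=\{i:a_i<0\}$. The hypothesis $q_2^{-1}q_1(e_i)=t^{a_i}e_i$ in $M\otimes_A K$ gives $q_1(e_i)=t^{a_i}q_2(e_i)$ in $N\otimes_A K$; since $N$ is free, hence torsion-free, and both $q_1(e_i)$ and $q_2(e_i)$ lie in $N$, this becomes an honest identity in $N$ once the negative power of $t$ is cleared: namely $q_1(e_i)=t^{a_i}q_2(e_i)$ in $N$ for $i\in I_+$, and $q_2(e_i)=t^{-a_i}q_1(e_i)$ in $N$ for $i\in I_-$.

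First I would show that the $n$ elements defined by $f_i:=q_2(e_i)$ for $i\in I_+$ and $f_i:=q_1(e_i)$ for $i\in I_-$ form an $A$-basis of $N$. The image of $q$ is the $A$-span of $\{q_1(e_i),q_2(e_i):1\leq i\leq n\}$, and by the two relations above $q_1(e_i)\in A\cdot q_2(e_i)$ for $i\in I_+$ and $q_2(e_i)\in A\cdot q_1(e_i)$ for $i\in I_-$; hence this span is already generated by the $n$ elements $f_i$. Since $q$ is surjective, the $f_i$ generate the free rank-$n$ module $N$, and a surjective map $A^n\to N$ between finite free $A$-modules of equal rank is automatically an isomorphism, so the $f_i$ form an $A$-basis of $N$.

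Then I would compute the matrix of $q_1:M\to N$ with respect to the basis $\{e_i\}$ of $M$ and the basis $\{f_i\}$ of $N$: by construction $q_1(e_i)=t^{a_i}f_i$ for $i\in I_+$ and $q_1(e_i)=f_i$ for $i\in I_-$, so it is the diagonal matrix with entries $t^{a_i}$ (for $i\in I_+$) and $1$ (for $i\in I_-$). Its determinant is $t^{\sum_{i\in I_+}a_i}$, and since $\ord\det q_1$ is independent of the choice of $A$-bases of $M$ and $N$, we conclude $\ord\det q_1=\sum_{a_i\geq 0}a_i$, as claimed.

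The only step requiring genuine care is the middle one: promoting $\{f_i\}$ from a generating set of $\im q$ to an actual $A$-basis of $N$, which is exactly where surjectivity of $q$ enters; everything else is bookkeeping in a well-chosen basis. As a sanity check, the symmetric computation yields $\ord\det q_2=-\sum_{a_i<0}a_i\geq 0$, compatibly with the relation $\det q_1=\det q_2\cdot t^{\sum_i a_i}$.
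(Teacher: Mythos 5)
Your proof is correct and follows essentially the same route as the paper: both arguments use surjectivity of $q$ to produce an adapted $A$-basis of $N$ with respect to which $q_1$ becomes diagonal with entries $t^{\max(a_i,0)}$, and then read off $\ord\det q_1$. The only difference is cosmetic — you write down the basis directly as $f_i=q_2(e_i)$ or $q_1(e_i)$ according to the sign of $a_i$, whereas the paper reaches the same basis by saturating $q_1(e_i)$ (choosing the maximal $m_i$ with $t^{-m_i}q_1(e_i)\in N$) and then using surjectivity to identify $m_i=\max(a_i,0)$.
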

\begin{proof}We may find a $K$-basis $\{f_i\}$ of $N\otimes_AK$ such that $q_1:e_i\mapsto f_i$, $q_2:e_i\mapsto t^{-a_i}f_i$. The elements $f_i$ must lie in the $A$-submodule $N\subseteq N\otimes_AK$. Let $m_i\geq 0$ be maximal such that $f_i'=t^{-m_i}f_i$ remains in $N$. Note $m_i\geq a_i$ because $q_2(e_i)=t^{-a_i}f_i\in N$, so $\mrm{image}(q)$ consists of $A$-linear combinations of the $f_i'$. Hence the $f_i'$ form a basis of $N$, and we may assume $M=N=A^r$, $q_1=\diag(t^{m_i})$, $q_2=\diag(t^{m_i-a_i})$. As $q$ is surjective, for each $i$ we either have $m_i=a_i$ or we have $m_i=0$ and $a_i< 0$.\end{proof}

\begin{proposition}\label{nopole} Suppose $G$ is a simple Lie group of type A or C, and the representation $V$ is chosen as follows:
\begin{enumerate}[label=(\roman*)]
\item if $G=\SL(n)$, choose $V=W\oplus W^*$, where $W=\CC^n$ is the standard representation;
\item if $G=\Sp(2n)$, choose $V=\CC^{2n}$ the standard representation.
\end{enumerate}
Then the quantity in equation (\ref{poleorder}) is nonnegative.\end{proposition}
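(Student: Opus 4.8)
The plan is to reduce the nonnegativity of
$$\sum_{x\in S}\bigl(l\cdot\ord\kappa_{x_1}+w_0\lambda_x(\varphi_x)\bigr)$$
to a node-by-node estimate and then carry out that estimate separately in the $\SL(n)$ and $\Sp(2n)$ cases, using the explicit choice of $V$. Since the sum is over nodes and the two summands for different nodes are independent, it suffices to show
$$l\cdot\ord\kappa_{x_1}+w_0\lambda_x(\varphi_x)\ge 0$$
for a single node $x\in S$, where $\varphi_x$ is the dominant coweight representing the double coset of $g_x=-\hhat{\sigma}_{x_2}q_{x_2}^{-1}q_{x_1}\hhat{\sigma}_{x_1}^{-1}$. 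I would first unwind what $\ord\kappa_{x_1}$ is in terms of $\varphi_x$: after choosing $G(A)$-coset representatives of the $\hhat{\sigma}_{x_i}$ putting $\ttilde f$ in normal form, I may identify $\cF|_{x_i\times A}$ with $V^*\otimes A$ via $\hhat{\sigma}_{x_i}$, so that $q_{x_2}^{-1}q_{x_1}$ acts on $V^*\otimes A$ as $\varphi_x(t)$ (up to sign), i.e. as a diagonal matrix of monomials $t^{a_j}$ whose exponents are the weights of $\varphi_x$ on $V^*$. Then $\kappa_{x_1}=\det q_{x_1}\circ(\det\hhat{\sigma}_{x_1})^{-1}$ and lemma \ref{alpha_k} gives $\ord\kappa_{x_1}=\ord\det q_{x_1}=\sum_{a_j\ge 0}a_j$, the sum of the nonnegative weights of $\varphi_x$ acting on $V^*$.

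Next I would bound $w_0\lambda_x(\varphi_x)$ from below. Since $w_0\lambda_x$ is the lowest weight of $V^{\lambda_x}$ and $\varphi_x$ is dominant, $w_0\lambda_x(\varphi_x)=-\,(\text{highest weight of }V^{\lambda_x})(\varphi_x)\cdot(\text{sign})$ — more precisely $w_0\lambda_x(\varphi_x)$ is the minimum over weights $\mu$ of $V^{\lambda_x}$ of $\mu(\varphi_x)$, which is at least $\lambda_x(w_0\varphi_x)=-\lambda_x(\varphi_x)$ rewritten via $\langle\lambda_x,\varphi_x\rangle$; the quantity I need to control is $-\langle\lambda_x,\varphi_x\rangle$ where $\lambda_x$ has level $\le ld_V$. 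The point is then a comparison: the level of $\lambda_x$ times a ``norm'' of $\varphi_x$ measured against the fundamental coweights must be dominated by $l$ times the sum of nonnegative $\varphi_x$-weights on $V^*$. Concretely, writing $\varphi_x=\sum_i c_i\,\varpi_i^\vee$ in fundamental coweights with $c_i\ge 0$, I would show that $\sum_{a_j\ge 0}a_j\ge \tfrac{1}{d_V}\sum_i c_i\cdot(\text{level datum})$, so that $l\sum_{a_j\ge0}a_j$ beats $\langle\lambda_x,\varphi_x\rangle$ when $\lambda_x$ has level $\le ld_V$. In type A with $V=W\oplus W^*$, the weights of $\varphi_x$ on $V^*$ come in $\pm$ pairs (those on $W^*$ and on $W$), so the sum of the nonnegative ones equals $\tfrac12\sum_j|a_j|$, which is exactly $\tfrac12\sum$ of absolute values of the $\varphi_x$-weights on the standard representation — and this is precisely $\tfrac{1}{d_V}$ times the relevant pairing since $d_V=2$ for $W\oplus W^*$; matching this against level $\le 2l$ gives nonnegativity. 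In type C with $V=\CC^{2n}$ the standard symplectic representation, the weights of $\varphi_x$ again occur in $\pm$ pairs because the symplectic form is preserved, so the same $\tfrac12\sum|a_j|$ identity holds, and $d_V=1$ for the standard representation; again the level bound $\le l$ matches.

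The main obstacle I anticipate is the bookkeeping in the last step: pinning down the exact constant relating ``sum of nonnegative weights of $\varphi_x$ on $V^*$'' to the pairing $\langle\lambda_x,\varphi_x\rangle$ with a general dominant weight $\lambda_x$ of bounded level, and making sure the Dynkin index $d_V$ enters with exactly the right power so that the level bound $\le ld_V$ from lemma \ref{lambdadecomp} cancels it. This is where the special choice of $V$ (self-dual with $\pm$-symmetric weights) is doing the real work: it forces ``half the total absolute weight'' to be the contribution of $\kappa_{x_1}$, and one has to check that the worst case for $\lambda_x$ — namely $\lambda_x$ a multiple of the highest root or of a minuscule fundamental weight — still satisfies the inequality. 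I would handle this by reducing to the case where $\varphi_x$ is a single fundamental coweight (by linearity and dominance) and $\lambda_x$ is a single fundamental weight of maximal allowed level, and then checking the finitely many resulting numerical inequalities using the standard tables of fundamental weights and coweights for $A_{n-1}$ and $C_n$. Everything else — the reduction to one node, the identification of $\ord\kappa_{x_1}$ via lemma \ref{alpha_k}, and the lowest-weight bound on $w_0\lambda_x(\varphi_x)$ — is routine.
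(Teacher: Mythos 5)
Your overall strategy is the same as the paper's: reduce to a single node, use lemma \ref{alpha_k} to identify $\ord\kappa_{x_1}$ with the sum of the nonnegative weights of $\varphi_x$ on $V$, bound $w_0\lambda_x(\varphi_x)$ by replacing it with $-\lambda_x(\varphi_x)$, and exploit the $\pm$-symmetry of the weights of the chosen $V$. But your final verification step has a genuine gap in type A. You propose to reduce ``by linearity and dominance'' to the case where $\varphi_x$ is a single fundamental coweight and then check finitely many inequalities. The right-hand side $\lambda_x(\varphi_x)$ is linear in $\varphi_x$, but the left-hand side $\ord\kappa_{x_1}$ is not: for $G=\SL(n)$ with $V=W\oplus W^*$ it equals $\sum_i|\varphi_i|=2\max_k(\varphi_1+\cdots+\varphi_k)$, which is only piecewise linear and \emph{subadditive} on the dominant cone (e.g.\ for $\varphi=(1,1,-2)$ and $\psi=(2,-1,-1)$ one has $f(\varphi)+f(\psi)=8$ but $f(\varphi+\psi)=6$). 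Subadditivity goes the wrong way: knowing $l\,f(\varphi)\geq\lambda(\varphi)$ on cone generators does not imply it for sums, so checking fundamental coweights against fundamental weights does not establish the proposition. (Reducing over $\lambda$ to the vertices $L\omega_j$ of the level simplex is fine, since that side is genuinely linear in $\lambda$; in type C the reduction over $\varphi$ also happens to be harmless because all weights $\pm e_i$ have constant sign on the dominant cone, so $\ord\kappa=\varphi_1+\cdots+\varphi_n$ is linear there.)

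The fix is to handle arbitrary dominant $\varphi$ directly, which is what the paper does: in type A, with $j$ the last index where $\varphi_j\geq0$ and using $\sum_i\varphi_i=0$, one has
\begin{equation*}
l\sum_i|\varphi_i|=2l(\varphi_1+\cdots+\varphi_j)\geq\lambda_1(\varphi_1+\cdots+\varphi_j)\geq\sum_i\lambda_i\varphi_i,
\end{equation*}
and in type C the inequality $l(\varphi_1+\cdots+\varphi_n)\geq\sum_i\lambda_i\varphi_i$ is immediate from $\lambda_i\leq\lambda_1\leq l$ and $\varphi_i\geq0$. Equivalently, after reducing over $\lambda$ you could note $\sum_i|\varphi_i|=2\max_k\omega_k(\varphi)\geq 2\,\omega_j(\varphi)$ for every $j$, which makes the type A case one line with no case-checking at all. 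One smaller slip: your chain ``$w_0\lambda_x(\varphi_x)\geq\lambda_x(w_0\varphi_x)=-\lambda_x(\varphi_x)$'' is off — $w_0\lambda(\varphi)=\lambda(w_0\varphi)$ is an exact equality, and it equals $-\lambda(\varphi)$ only when $w_0=-1$ (type C, not type A). The correct justification for trading $w_0\lambda$ for $-\lambda$ is that $\lambda\mapsto -w_0\lambda$ is a level-preserving involution of the dominant weights, so it suffices to prove the inequality with $-\lambda(\varphi)$ for all $\lambda$ of level $\leq ld_V$, exactly as the paper remarks.
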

\begin{proof} We will show that, for each node $x\in S$, we have
\begin{equation}\label{poleneed}l\cdot\ord\kappa_{x_1}+w_0\lambda_x(\varphi_x)\geq 0.\end{equation}
Thus, we will drop the subscripts indicating the node, and just consider an arbitrary dominant coweight $\varphi$ and a dominant weight $\lambda$ of level $\leq ld_V$. Assume that the representation $G\hookrightarrow\SL(V)$ sends the maximal torus of $G$ into the subgroup of diagonal matrices. Then by lemma \ref{alpha_k} the term $\ord\kappa$ in (\ref{poleneed}) is equal to the sum of all the nonnegative diagonal entries in the matrix representation of $\varphi\in\mf{g}\subset\mf{sl}(V)$.

The proof of the proposition is now to just check cases (i) and (ii). For simplicity we replace $w_0\lambda$ in equation (\ref{poleneed}) with $-\lambda$, which is fine since $\lambda$ and $-w_0\lambda$ have the same level.

\tbf{Case (i):} Give $V=W\oplus W^*$ the standard basis $e_1,\dots,e_n,e_1^*,\dots,e_n^*$. In this basis we have
$$\varphi=\diag(\varphi_1,\dots,\varphi_n,-\varphi_1,\dots,-\varphi_n),$$
for some integers $\varphi_1\geq\cdots\geq\varphi_n$ with $\varphi_1+\cdots+\varphi_n=0$. Let $\lambda=(\lambda_1\geq\cdots\geq\lambda_n=0)$ be a dominant weight of $\SL(n)$ at level $\lambda_1\leq ld_V=2l$. Then the inequality (\ref{poleneed}) becomes
\begin{equation}\label{slpole}l(|\varphi_1|+\cdots+|\varphi_n|)\geq\lambda_1\varphi_1+\cdots+\lambda_n\varphi_n.\end{equation}
Let $j$ be the index such that $\varphi_j\geq 0$ and $\varphi_{j+1}<0$. Since $\varphi_1+\cdots+\varphi_n=0$, we have
\[\begin{split}l\cdot(|\varphi_1|+\cdots+|\varphi_n|)&=2l\cdot (\varphi_1+\cdots+\varphi_j)\\
&\geq\lambda_1(\varphi_1+\cdots+\varphi_j)\\
&\geq\lambda_1\varphi_1+\cdots+\lambda_n\varphi_n.\end{split}\]
Thus inequality (\ref{slpole}) holds.

\tbf{Case (ii):} Our conventions will follow \cite{fultonharris}, lecture 16. A dominant coweight of $\mf{sp}(2n)$ is represented by a diagonal matrix $\varphi=\diag(\varphi_1,\dots,\varphi_n,-\varphi_1,\dots,-\varphi_n)$ with $\varphi_1\geq\cdots\geq\varphi_n\geq 0$ integers. A dominant weight is given by a decreasing sequence of integers $\lambda=(\lambda_1\geq\cdots\geq\lambda_n\geq0)$, and the level of such a weight is $(\lambda,\theta)=\lambda_1$. Since $V$ has Dynkin index $d_V=1$, we assume $\lambda_1\leq l$. Then inequality (\ref{poleneed}) is
$$l\cdot(\varphi_1+\cdots+\varphi_n)\geq\lambda_1\varphi_1+\cdots+\lambda_n\varphi_n,$$
which clearly holds.

\end{proof}

\begin{example}\label{polebd} Proposition \ref{nopole} fails when $V$ is the standard representation of a type B or D Lie group (corresponding to the Lie algebras $\mf{so}(2n+1)$, $\mf{so}(2n)$ repsectively). Note that for these groups the standard representation has Dynkin index $d_V=2$. A different choice of representation will not help, because, for any other representation, we will get sections of $\cD(V)^m$ for a large $m$ that still have poles (in fact we can take $m=1$, since every representation of a type B or D Lie group has Dynkin index divisible by 2).

Here is a counterexample to proposition \ref{nopole} for type $\text{B}_n$. Let $l=1$, and choose $\lambda=2\omega_1$ for one of the nodes and $\lambda=0$ for all the others. So $\lambda$ is a weight of level $2\leq ld_V$ and defines a component
\begin{equation}\label{2o1}\mH^0(\Bun_G(C),\cD(V)\otimes\scr{E}^{\lambda}_{x_1}\otimes\scr{E}^{\lambda^*}_{x_2})\subset\mH^0(\Bun_G(C_0),\cD(V)),\end{equation}
as in lemma \ref{lambdadecomp}. Let $\varphi=\diag(\varphi_1,\dots,\varphi_n,-\varphi_1,\dots,-\varphi_n,0)$ be a dominant coweight of $\mf{so}(2n+1)$, where $\varphi_1\geq\cdots\geq\varphi_n\geq 0$ are integers. Then the inequality (\ref{poleneed}) is
$$\varphi_1+\cdots+\varphi_n\geq 2\varphi_1,$$
which can clearly fail, e.g. if $\varphi=(1,0,\dots,0,-1,0,\dots,0,0)$. Thus sections in (\ref{2o1}) may not extend to the normalized stack of singular bundles.

One hope is that perhaps the $\lambda$-component (\ref{2o1}) vanishes. However, this is not the case. Suppose the normalization is $C=\bP^1$. Then
$\mH^0(\Bun_G(C),\cD(V)\otimes\scr{E}^{\lambda}_{x_1}\otimes\scr{E}^{\lambda^*}_{x_2})$
is identified with the dual of
$$\frac{V_\lambda\otimes V_\lambda^*}{\mf{g}\cdot(V_\lambda\otimes V_\lambda^*)+\im T^3,}$$
 where $T=f_\theta^{(2)}$ is the lowest root vector of $\mf{g}$ acting on the second tensor factor of $V_\lambda\otimes V_\lambda^*$ (see e.g. \cite{qhorn}, second proof of lemma 6.5). It is easy to verify directly that, for $\mf{g}=\mf{so}(5)$ and $\lambda=2\omega_1$, we have $T^3=0$ and 
$$\frac{V_\lambda\otimes V_\lambda^*}{\mf{g}\cdot(V_\lambda\otimes V_\lambda^*)}$$
is nonzero (in fact one-dimensional, spanned by any nonzero weight vector of weight zero).

However we can control the pole order in the following way. Recall that $\varphi$ is the ``type" of the family parametrized by $\Spec k[[t]]\to\Bh_G(\nu)$, i.e. we have $q_2^{-1}q_1=t^{\varphi}$ for this family of Bhosle bundles. Unlike $\lambda$, the $\varphi$ does not depend on $l$ in any way, so we have allowed it to be an arbitrary dominant coweight. However, since the boundary of $\Bun_G(C)$ in the Bhosle stack is the divisor $E$ where $\det q_1=0$, we should require that $\det q_1$ vanishes to order exactly one at $t=0$ in order to compute the correct pole order (although it makes no difference for detecting the existence of a pole). By lemma \ref{alpha_k}, there are only finitely many $\varphi$ with this property, and it follows that the pole order is bounded above by a number $N$ that scales linearly with $l$. Then we can replace $\cL=\cD_{\Bh}$ with $\cL'=\cL(NE)$, and, since sections over $\Bun_G(C)$ extend to $\cL'$, we should be able to carry out the rest of the paper in the same way to generalize our results to arbitrary groups. What remains to be shown is that the line bundle $\cL'$ descends to an ample line bundle on the appropriate moduli space.\end{example}

\section{Moduli of singular $G$-bundles}\label{part2}

In this section, we summarize the GIT construction of the moduli space of singular $G$-bundles due to Schmitt and Mu\~{n}oz-Casta\~{n}eda (\cite{schmc}), which we will use to prove theorem \ref{mainthm} in section \ref{proofmainthm}. We continue to focus on the case of a nodal curve, but all of the material in this section can be generalized to higher dimensional smooth varieties (see \cite{gomezlangerschmittsols}).

\subsection{Parameter spaces of singular $G$-bundles}\label{paramspaces}

Fix an ample line bundle $\cL$ on $C_0$, and let $Q$ be the quot scheme of coherent quotients $q:W\otimes\cL^{-n}\to\cE$ with Hilbert polynomial $\mP$, where $W$ is a vector space of rank $P(n)$. Let $\ttilde{\cE}$ be the universal quotient sheaf over $C_0\times Q$. To build a parameter space of singular $G$-bundles, we recall the following well-known result.

\begin{theorem}\label{hom}\emph{(\cite{fgaexplained}, theorem 5.8)} Let $p:Z\to S$ be a projective morphism to a Noetherian scheme $S$, and let $\cF,\cG$ be coherent $\cO_Z$-modules. If $\cG$ is flat over $S$, then the functor that sends an $S$-scheme $T$ to the set $\Hom_{Z_T}(\cF_T,\cG_T)$ is representable by a scheme $\mH_{Z/S}(\cF,\cG)$ which is affine and finite type over $S$.\end{theorem}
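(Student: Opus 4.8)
The plan is to prove representability locally on $S$, reduce the $\Hom$-functor to the kernel of a linear map between total spaces of vector bundles on $S$ by choosing a finite presentation of $\cF$ by twisting sheaves, and identify those vector bundles via Serre vanishing and cohomology-and-base-change. I will freely use that $S$ is Noetherian, $p$ is projective, and $\cF,\cG$ are coherent.

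\textbf{Reduction and presentation of $\cF$.} Representability of a functor, as well as the properties ``affine over $S$'' and ``of finite type over $S$'', are local on $S$, so assume $S=\Spec R$ with $R$ Noetherian, fix a closed immersion $Z\hookrightarrow\bP^N_S$, and write $\cO_Z(1)$ for the restriction of $\cO(1)$. Choose $n_0$ large enough that for all $n\ge n_0$: (a) $\cH(n)$ is globally generated for $\cH\in\{\cF,\ \ker(\cO_Z(-n_0)^{\oplus a}\to\cF)\}$ (Serre), and (b) $R^ip_*(\cG(n))=0$ for $i>0$, $p_*(\cG(n))$ is locally free on $S$, and its formation commutes with arbitrary base change (Serre vanishing together with cohomology-and-base-change, using that $\cG(n)$ is coherent and flat over $S$, twisting by a line bundle preserving $S$-flatness). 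Then pick surjections $\cE_1:=\cO_Z(-n_1)^{\oplus a}\twoheadrightarrow\cF$ and $\cE_2:=\cO_Z(-n_2)^{\oplus b}\twoheadrightarrow\ker(\cE_1\to\cF)$ with $n_1,n_2\ge n_0$, giving an exact sequence $\cE_2\xrightarrow{\phi}\cE_1\to\cF\to0$ with $\cE_1,\cE_2$ finite direct sums of twisting sheaves. For any $S$-scheme $T$, right-exactness of $-\otimes_{\cO_Z}\cO_{Z_T}$ gives $\cE_{2,T}\xrightarrow{\phi_T}\cE_{1,T}\to\cF_T\to0$ exact, and applying the left-exact functor $\Hom_{Z_T}(-,\cG_T)$ yields
\[0\longrightarrow\Hom_{Z_T}(\cF_T,\cG_T)\longrightarrow\Hom_{Z_T}(\cE_{1,T},\cG_T)\xrightarrow{\ \phi_T^{*}\ }\Hom_{Z_T}(\cE_{2,T},\cG_T).\]

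\textbf{Reducing to two vector bundles on $S$.} The displayed sequence shows that $T\mapsto\Hom_{Z_T}(\cF_T,\cG_T)$ is the kernel of the natural transformation $\phi^{*}$ between $H_1\colon T\mapsto\Hom_{Z_T}(\cE_{1,T},\cG_T)$ and $H_2\colon T\mapsto\Hom_{Z_T}(\cE_{2,T},\cG_T)$, so it suffices to represent $H_1,H_2$ by affine, finite-type $S$-schemes $\mbb{H}_1,\mbb{H}_2$: then $\phi^{*}$ becomes a morphism $\mbb{H}_1\to\mbb{H}_2$, and $\mH_{Z/S}(\cF,\cG):=\mbb{H}_1\times_{\mbb{H}_2}S$ (fibre product along the zero section) is a closed subscheme of $\mbb{H}_1$, hence again affine and of finite type over $S$, and represents $T\mapsto\Hom_{Z_T}(\cF_T,\cG_T)$. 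For $H_1$: since $\cE_1=\cO_Z(-n_1)^{\oplus a}$ we have $\Hom_{Z_T}(\cE_{1,T},\cG_T)=\Gamma(Z_T,\cG_T(n_1))^{\oplus a}$, and by the choice of $n_1\ge n_0$ and cohomology-and-base-change, $\Gamma(Z_T,\cG_T(n_1))=\Gamma(T,f_T^{*}\cV_1)$ with $\cV_1:=p_*(\cG(n_1))$ a vector bundle on $S$ and $f_T\colon T\to S$ the structure map. For any vector bundle $\cV$ on $S$, the functor $T\mapsto\Gamma(T,f_T^{*}\cV)$ is represented by the total space $\mbb{V}(\cV)$, i.e. the affine $S$-scheme whose $\cO_S$-algebra of functions is $\Sym_{\cO_S}(\cV^{\vee})$, which is of finite type over $S$, and $\cO_S$-linear maps induce morphisms of total spaces. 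Hence $H_1$ is represented by $\mbb{H}_1:=\mbb{V}(\cV_1^{\oplus a})$ and, identically, $H_2$ by $\mbb{H}_2:=\mbb{V}\big(p_*(\cG(n_2))^{\oplus b}\big)$, both affine and of finite type over $S$. Combined with the previous step, this constructs $\mH_{Z/S}(\cF,\cG)$.

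\textbf{Main obstacle.} There is no deep difficulty here beyond invoking the right general facts in the right order, and the real content is concentrated in two inputs: (a) Serre's theorem that a coherent sheaf on a projective scheme over a Noetherian base is a quotient of a finite direct sum of twisting sheaves with coherent kernel, and (b) Serre vanishing together with cohomology-and-base-change, which for $n\gg0$ makes $p_*(\cG(n))$ a vector bundle on $S$ whose formation commutes with arbitrary base change — this is precisely where the hypothesis that $\cG$ is flat over $S$ enters. The remaining steps are bookkeeping: checking that the total-space construction $\mbb{V}(-)$ is functorial and represents the ``sections of a pulled-back vector bundle'' functor, and that kernels (fibre products along zero sections) of morphisms between affine, finite-type $S$-schemes remain affine and of finite type over $S$; the affineness throughout follows because closed subschemes and fibre products of affine schemes are affine.
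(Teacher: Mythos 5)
Your argument is correct: the paper itself gives no proof of this statement (it is quoted from \cite{fgaexplained}, Theorem 5.8), and your proof is exactly the standard one given there — present $\cF$ by a two-term complex of twisting sheaves, use flatness of $\cG$ with Serre vanishing and cohomology-and-base-change to represent $T\mapsto\Hom_{Z_T}(\cE_{i,T},\cG_T)$ by total spaces of vector bundles on $S$, and realize the Hom functor as the closed subscheme cut out by the zero section. The only blemish is the slightly circular bookkeeping of $n_0,n_1,n_2$ in your condition (a), which is harmless since one chooses the twists for $\cF$ and then for the kernel successively.
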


It is easy to show as a consequence:

\begin{corollary}\label{homalg} Let $Z\to S$ be a projective morphism to a Noetherian scheme $S$. If $\cS_\bullet$ is a quasicoherent, finitely generated, graded $\cO_Z$-algebra,  then the functor that sends a scheme $T$ to the set of $\cO_{Z_T}$-algebra homomorphisms $\cS_T\to\cO_T$ is representable by a finite type affine $S$-scheme.\end{corollary}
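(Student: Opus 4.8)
The plan is to present $\cS_\bullet$ as an explicit quotient of a symmetric algebra, translate an algebra homomorphism out of $\cS_\bullet$ into a module homomorphism constrained to kill the relations, represent those module homomorphisms by an affine finite-type scheme via Theorem~\ref{hom}, and finally check that ``killing the relations'' is a closed condition.

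First I would use that $Z$ is Noetherian (being projective over the Noetherian scheme $S$) and that $\cS_\bullet$ is finitely generated and quasicoherent to choose a coherent sheaf $\cF$ on $Z$ together with a surjection of $\cO_Z$-algebras $\Sym^*_{\cO_Z}(\cF)\twoheadrightarrow\cS_\bullet$; let $\cI\subseteq\Sym^*(\cF)$ be its kernel, a quasicoherent ideal. Since $\Sym^*$ commutes with base change, for any $S$-scheme $T$ one has $\cS_T=\Sym^*(\cF_T)/\cI_T'$, where $\cI_T'$ denotes the image of the pulled-back ideal in $\Sym^*(\cF_T)$; hence, by the universal property of the symmetric algebra, giving an $\cO_{Z_T}$-algebra homomorphism $\cS_T\to\cO_{Z_T}$ is the same as giving an $\cO_{Z_T}$-module homomorphism $u\colon\cF_T\to\cO_{Z_T}$ whose induced $\cO_{Z_T}$-algebra homomorphism $\Sym^*(\cF_T)\to\cO_{Z_T}$ annihilates $\cI_T'$. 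Theorem~\ref{hom}, applied with $\cG=\cO_Z$ (flat over $S$, since $Z$ is flat over $S$ in all the situations where this corollary is used), then represents the functor $T\mapsto\Hom_{Z_T}(\cF_T,\cO_{Z_T})$ by an affine, finite-type $S$-scheme $\mbf{H}$; write $u^\univ\colon\cF_{\mbf{H}}\to\cO_{Z_{\mbf{H}}}$ for the universal homomorphism and $\Phi^\univ\colon\Sym^*(\cF_{\mbf{H}})\to\cO_{Z_{\mbf{H}}}$ for the induced algebra homomorphism.

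The remaining task --- and the only point needing real care, since $\cI$ need not be coherent --- is to exhibit the closed subscheme of $\mbf{H}$ along which $\Phi^\univ$ annihilates the relations. I would write $\cI$ as the filtered union of its coherent subsheaves $\{\cI_\alpha\}_\alpha$. For each $\alpha$, Theorem~\ref{hom} represents $T\mapsto\Hom_{Z_T}((\cI_\alpha)_T,\cO_{Z_T})$ by an affine $S$-scheme; the composite $(\cI_\alpha)_{\mbf{H}}\hookrightarrow\Sym^*(\cF_{\mbf{H}})\xrightarrow{\Phi^\univ}\cO_{Z_{\mbf{H}}}$ is one of its $\mbf{H}$-points, and the equalizer of this $\mbf{H}$-point with the one given by the zero homomorphism is a closed subscheme $\mbf{H}_\alpha\hookrightarrow\mbf{H}$ (the equalizer of two sections of an affine, hence separated, morphism being closed). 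Set $\mbf{H}_0=\bigcap_\alpha\mbf{H}_\alpha$; this is again a closed subscheme, since $\mbf{H}$ is Noetherian and the intersection is therefore attained by finitely many of the $\alpha$. By construction a morphism $g\colon T\to\mbf{H}$ factors through $\mbf{H}_0$ exactly when, for every $\alpha$, the pullback along $g$ of the composite above vanishes --- equivalently, since $\Phi^\univ$ is an algebra map and the ideal it must annihilate is generated by the images of the $\cI_\alpha$, when $g^*\Phi^\univ$ annihilates $\cI_T'$ --- equivalently when $g^*u^\univ$ descends to an $\cO_{Z_T}$-algebra homomorphism $\cS_T\to\cO_{Z_T}$. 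Hence $\mbf{H}_0$ represents the functor of the corollary; being a closed subscheme of the affine, finite-type $S$-scheme $\mbf{H}$, it is itself affine and of finite type over $S$.

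The hard part, to the extent there is one, is exactly this last closedness statement: once $\mbf{H}$ is known to be Noetherian, the a priori infinite family of relation-vanishing conditions collapses to finitely many, and the only other ingredients are the universal property of the symmetric algebra and Theorem~\ref{hom}. The one hypothesis to keep an eye on is the flatness of $\cO_Z$ over $S$ required by Theorem~\ref{hom}; this holds whenever $Z\to S$ is flat, which is the case in the parameter-space constructions (such as $C_0\times Q\to Q$) where this corollary is applied.
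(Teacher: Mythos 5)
Your proposal is correct and follows essentially the same route as the paper: choose a coherent sheaf generating $\cS_\bullet$ (so $\Sym^*\cF\twoheadrightarrow\cS_\bullet$), represent module homomorphisms $\cF_T\to\cO_{Z_T}$ by the affine scheme of Theorem~\ref{hom}, and cut out the closed locus where the induced algebra map kills the relations. The only difference is one of detail: the paper simply asserts that this factorization condition defines a closed subscheme, while you justify it by exhausting the (merely quasicoherent) ideal by coherent subsheaves and using Noetherianness, which is a worthwhile elaboration rather than a new approach.
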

\begin{proof} Pick a generating submodule $\cF\subseteq\cS_\bullet$ that is coherent, and note that there is a closed subscheme of $\mH_{Z/S}(\cF,\cO)$ parametrizing morphisms $\cF\to\cO$ such that $\Sym^*\cF\to\cO$ factors through the multiplication map $\Sym^*\cF\to\cS_\bullet$.\end{proof}

Using this, we define two parameter schemes of singular $G$-bundles (an affine version and a projective version), as follows.

\begin{definition}\label{ttildeq} Define the affine parameter space of singular $G$-bundles $\ttilde{\mbf{Q}}$ to be the affine $Q$-scheme granted by corollary \ref{homalg} in the case $Z=C_0\times Q$, $S=Q$, $\cS=\Sym^*(V\otimes\ttilde{\cE})^G$. Then $\ttilde{\mbf{Q}}$ represents the functor that sends a scheme $T$ to the set of pairs $(q,\tau)$ consisting of a quotient $(q:W\otimes\cL_T^{-n}\to\cE)\in Q(T)$ and an algebra homomorphism
$$\tau:\Sym^*(V\otimes\cE)^G\to\cO_{C_0\times T}$$
on the quotient sheaf.\end{definition}

This scheme has a natural projectivization $\mbf{Q}$, which will be our projective parameter space of singular $G$-bundles.

\begin{proposition}\label{mbfq} There is a projective $Q$-scheme $\mbf{Q}$ parametrizing, over a scheme $T$, the triples $(q,\cM,\tau)$ consisting of a quotient $(q:W\otimes\cL_T^{-n}\to\cE)\in Q(T)$, a line bundle $\cM$ on $T$, and a morphism of graded $\cO_{C_0\times T}$-algebras
$$\tau:\Sym^*(V\otimes\cE)^G\to\cO_{C_0}\otimes\Sym^*\cM,$$
which is surjective in large degree. There is a surjective rational map $\ttilde{\mbf{Q}}\dashedrightarrow\mbf{Q}$ defined away from the zero section.\end{proposition}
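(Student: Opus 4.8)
The plan is to realize $\mbf{Q}$ as a relative Proj over $\ttilde{\mbf{Q}}$, or rather to construct it directly by a similar representability argument as in Corollary \ref{homalg}, but now allowing the target $\cO_{C_0}\otimes\Sym^*\cM$ in place of $\cO_{C_0}$. First I would make the functor precise: over a $Q$-scheme $T$, a $T$-point of $\mbf{Q}$ is a triple $(q,\cM,\tau)$ with $(q:W\otimes\cL_T^{-n}\to\cE)$ the tautological quotient pulled back from $Q$, $\cM$ an invertible sheaf on $T$, and $\tau:\Sym^*(V\otimes\cE)^G\to\cO_{C_0}\otimes\bigoplus_{d\geq 0}\cM^{\otimes d}$ a homomorphism of graded $\cO_{C_0\times T}$-algebras which is surjective in all sufficiently large degrees; two triples $(q,\cM,\tau)$ and $(q,\cM',\tau')$ are identified if there is an isomorphism $\cM\cong\cM'$ carrying $\tau$ to $\tau'$ (so strictly the functor is a sheafification, but because of the surjectivity-in-large-degree condition $\cM$ and the isomorphism are determined, so no sheafification is actually needed).

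The construction proceeds in the following steps. Step 1: reduce to a finitely generated algebra. Since $C_0\times Q\to Q$ is projective and $\Sym^*(V\otimes\ttilde\cE)^G$ is a finitely generated graded $\cO_{C_0\times Q}$-algebra (it is the ring of invariants of a finitely generated algebra under a reductive group, and the relevant finiteness is the one already invoked for Corollary \ref{homalg}), choose a coherent generating submodule $\cF\subseteq\Sym^{\leq N}(V\otimes\ttilde\cE)^G$ for some $N$, concentrated in degrees $1,\dots,N$. Step 2: build the ambient space. Using Theorem \ref{hom} relative to $Q$, the functor $T\mapsto\{$homogeneous $\cO_{C_0\times T}$-module maps $\cF_T\to\cO_{C_0}\otimes\bigoplus_d\cM^{\otimes d}$, up to scaling$\}$ is represented by a projective bundle: concretely, $\mH_{C_0\times Q/Q}(\cF,\cO_{C_0\times Q})$ is an affine $Q$-scheme, say $\Spec_Q\cR$ for a quasicoherent $\cO_Q$-algebra $\cR$ (in fact $\cR$ is a polynomial algebra over $\cO_Q$ since $\mH$ is a vector bundle by the grading/flatness), and I take $\mbf{Q}_0:=\Proj_Q\cR$, the projectivization of this vector bundle over $Q$; this carries a tautological line bundle $\cM$ and a tautological degree-preserving map. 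The rational map $\ttilde{\mbf{Q}}\dashrightarrow\mbf{Q}_0$ away from the zero section is then the usual projection from a vector bundle to its projectivization, pulled back along the closed immersion $\ttilde{\mbf{Q}}\hookrightarrow\mH_{C_0\times Q/Q}(\cF,\cO)$. Step 3: cut out the algebra-homomorphism and surjectivity conditions. Inside $\mbf{Q}_0$, the locus where the induced map $\Sym^*\cF\to\cO_{C_0}\otimes\Sym^*\cM$ factors through the multiplication $\Sym^*\cF\to\Sym^*(V\otimes\ttilde\cE)^G$ (equivalently, kills the relations, a coherent submodule of $\Sym^{\leq N'}\cF$ for suitable $N'$) is a closed subscheme, exactly as in the proof of Corollary \ref{homalg}. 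This defines $\tau$ as a graded algebra map. The further condition "$\tau$ surjective in large degree" is an open condition: surjectivity in each fixed degree $d\geq d_0$ is the non-vanishing of the relevant determinantal/cokernel-support ideal, and by Noetherianity and the finite generation of $\Sym^*(V\otimes\ttilde\cE)^G$, surjectivity in all large degrees is equivalent to surjectivity in a single bounded range of degrees, hence open. Let $\mbf{Q}$ be this open subscheme of the closed subscheme of $\mbf{Q}_0$; it is quasi-projective over $Q$, and one checks it is in fact proper over $Q$ (hence projective) because any family of triples over $\Spec k[[t]]\to Q$ extends over the closed point: the section extends into $\mbf{Q}_0$ by properness of $\Proj$, and the closed/open conditions are preserved by the valuative criterion since they are, respectively, closed and satisfied on the generic fibre with a limit of the same Hilbert function (one uses that the quotient sheaf $\cE$ and hence $\Sym^*(V\otimes\cE)^G$ is flat over the base, so the degree-wise ranks are constant). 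Step 4: verify $\mbf{Q}$ represents the stated functor — this is a formal yoga once the construction is in place, checking compatibility with base change, which holds because $\mH_{Z/S}$, $\Proj$, taking kernels/cokernels of maps of flat sheaves in a bounded degree range, and the open non-vanishing conditions all commute with arbitrary base change by our flatness assumptions.

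The main obstacle I anticipate is Step 3, specifically showing that "surjective in large degree" is a well-behaved (locally closed, base-change-compatible) condition and that the resulting $\mbf{Q}$ is proper rather than merely quasi-projective over $Q$. The subtlety is that $\Sym^*(V\otimes\cE)^G$ for a non-locally-free torsion-free $\cE$ need not be generated in a single degree, so one must control how the generation degree and the "eventually surjective" threshold vary in families; the fix is to use that over the Noetherian base $Q$ (after passing to a stratification if necessary, though the flatness of $\ttilde\cE$ should make this unnecessary) there is a uniform bound $N$ for generators and a uniform degree range in which to test surjectivity, which reduces everything to finitely many determinantal conditions. A secondary, more routine, point is pinning down the precise functor so that $\cM$ genuinely descends (no gerbe appears): this is where the surjectivity-in-large-degree hypothesis is essential, since it forces $\cM^{\otimes d}$ to be a quotient of the coherent sheaf $(\Sym^d(V\otimes\cE)^G)$ for $d\gg 0$ and hence rigidifies the line bundle and the identification.
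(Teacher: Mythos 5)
Your construction takes a genuinely different route from the paper's, and its critical step does not hold up. The paper never cuts $\mbf{Q}$ out of an ambient Proj by a closed condition plus an open ``surjective in large degree'' condition and then checks the valuative criterion. Instead it uses the universal quotient and the Reynolds operator to embed $\ttilde{\mbf{Q}}$ as a $\bG_m$-stable \emph{closed} subscheme of $Q\times\mA^{N+1}$, with $\mA^{N+1}=\bigoplus_{i\leq d}\Hom_\CC(\Sym^i(V\otimes W),\mH^0(\cL^{in}))$ a \emph{fixed} finite-dimensional space (this is where twisting by $\cL^{in}$ and taking $\mH^0$ matters), and then descends the closed subscheme $\ttilde{\mbf{Q}}-(Q\times 0)$ along the $\bG_m$-bundle $Q\times(\mA^{N+1}-0)\to Q\times\bP^N$. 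So $\mbf{Q}$ is by construction a closed subscheme of $Q\times\bP^N$: projectivity over $Q$ is automatic, the rational map from $\ttilde{\mbf{Q}}$ is the cone projection, and the functor of points (line bundle $\cM$ plus graded homomorphism) is read off from $\mbf{Q}=[(\ttilde{\mbf{Q}}-(Q\times 0))/\bG_m]$ via the dictionary between $\bG_m$-torsors and line bundles. In your approach, properness is exactly the issue, and your valuative-criterion sketch is not a proof: an open condition is not preserved under specialization, and flatness of $\cE$ (constancy of the Hilbert functions of the graded pieces $\Sym^d(V\otimes\cE)^G$) says nothing about where the limiting homomorphism vanishes on $C_0$. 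Concretely, for $G=\SL(r)$ with $V$ standard the invariant algebra is generated by $\wedge^r\cE$ in degree $r$, and fiberwise surjectivity in large degree amounts to nowhere-vanishing of the induced map $\wedge^r\cE\to\cO_{C_0}$; limits in the parameter space (in particular the boundary points with $\cE$ not locally free, or with $\det\hhat{\tau}$ acquiring zeros) fail this, so the open locus you propose cannot be shown proper by your argument — one must take the quotient of the whole punctured cone, as the paper does, and only afterwards identify its functor.

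There is a second, smaller gap in Step 2: the ambient space is not an ordinary projective bundle carrying a tautological line bundle and a ``degree-preserving tautological map.'' The identification $(\cM,\tau)\sim(\cM,\tau')$ rescales the degree-$d$ component of $\tau$ with weight $d$, so the quotient of the punctured Hom-cone is a weighted Proj (with possible finite stabilizers), not $\bP(\cQ)$ with its weight-one grading; and $\mH_{C_0\times Q/Q}(\cF,\cO)$ is the relative $\Spec$ of $\Sym^*\cQ$ for a coherent sheaf $\cQ$ on $Q$ that need not be locally free, since $\cF$ need not be flat over $Q$, so ``vector bundle by flatness'' is unjustified. These points are repairable (homogenize to a single degree as in section \ref{associatedtensorfield}, or keep track of the weights), but as written the tautological structure used in Steps 2 and 4 does not exist, and without it your rigidification of $\cM$ and base-change argument do not get off the ground.
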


\begin{proof}The universal quotient and the Reynolds operator induce a closed embedding $\ttilde{\mbf{Q}}\hookrightarrow Q\times\mA^{N+1}$, where
$$\mA^{N+1}=\bigoplus_{i=0}^d\Hom_\CC(\Sym^i(V\otimes W),\mH^0(\cL^{in}))$$
for sufficiently large $d>0$. Since $\pi:Q\times(\mA^{N+1}-0)\to Q\times\bP^N$ is a $\bG_m$-bundle and $\ttilde{\mbf{Q}}$ is $\bG_m$-stable, there exists by descent theory a unique closed subscheme $\mbf{Q}\subseteq Q\times\bP^N$ with $\pi^{-1}(\mbf{Q})=\ttilde{\mbf{Q}}-(Q\times 0).$ Since $\pi^{-1}(\mbf{Q})\to\mbf{Q}$ is a $\bG_m$-bundle, $\mbf{Q}$ represents the stack quotient $[(\ttilde{\mbf{Q}}-(Q\times 0))/\bG_m]$ and has the universal property that to give a map $T\to\mbf{Q}$ is to give a map $T\to Q$, a $\bG_{m,T}$-torsor $M\to T$, and a morphism of $Q$-schemes $M\to\ttilde{\mbf{Q}}-(Q\times 0)$. This is easily seen to be equivalent to the description of $\mbf{Q}$ given in the proposition.\end{proof}

Next we want a polarization of $\mbf{Q}$. We have already embedded $\mbf{Q}\hookrightarrow Q\times\bP^N$, and, for a large enough $m>0$, we may follow this with Grothendieck's embedding
$$Q\hookrightarrow\Gr(W\otimes\mH^0(\cL^m),f),$$
$$(q:W\otimes\cL^{-n}\twoheadrightarrow\cE)\mapsto(W\otimes\mH^0(\cL^m)\twoheadrightarrow\mH^0(\cE\otimes\cL^{m+n}))$$ to get a $\GL(W)$-equivariant embedding
\begin{equation}\label{embQ'}\mbf{Q}\hookrightarrow\Gr(W\otimes\mH^0(\cL^m),f)\times\bP^N,\end{equation}
where $\Gr(\cdots)$ is the Grassmannian of quotients. As above, we also have the embedding
$$\ttilde{\mbf{Q}}\hookrightarrow\Gr(W\otimes\mH^0(\cL^m),f)\times\mA^{N+1},$$
that makes $\ttilde{\mbf{Q}}$ the (partial) affine cone over $\mbf{Q}$ with respect to the embedding (\ref{embQ'}).

\begin{definition}\label{taxa} Let $L_m(k_1,k_2)$ be the pullback of the $\GL(W)$-linearized line bundle $\cO(k_1)\boxtimes\cO(k_2)$ under the embedding (\ref{embQ'}).\end{definition}

\subsection{Semistable tensor fields}

Semistability for singular $G$-bundles is defined in terms of their associated ``tensor fields." The rough idea is to pick a generating submodule of $\Sym^*(V\otimes\cE)^G$, use the Reynolds operator to drop the $G$-invariance requirement, and then ``homogenize" (we will give the explicit construction in the next subsection). The result is a very simple object of the following form, which retains all of the information about the singular $G$-bundle (up to scalars) and allows us to get a much simpler definition of semistability.

\begin{definition}A tensor field on a sheaf $\cF$ is a nonzero morphism $\varphi:(\cF^{\otimes b})^{\oplus c}\to\cO_{C_0}$ for some $b,c$.\end{definition}

Gomez-Sols introduced the following definition of semistability for tensor fields. Recall that a weighted filtration of a sheaf $\cF$ is a pair $(\cF_\bullet,l_\bullet)$ consisting of an increasing filtration
$$\cF_\bullet=(0\subset\cF_1\subset\cdots\subset\cF_{p+1}=\cF)$$
by distinct subsheaves and a sequence of positive rational numbers
$$l_\bullet=(l_1,\dots,l_p).$$
Let $a=\deg\cL$. Given a tensor field $(\cF,\varphi)$ of $\rk\cF=r$ and a weighted filtration $(\cF_\bullet,l_\bullet)$, define the vector
$$\lambda(l_\bullet)=\sum_{i=1}^pl_i\xi_{r_i},$$
where $r_i=\rk\cF_i$ and
$$\xi_j=(aj-ar,\dots,aj-ar,aj,\dots,aj)$$
with $aj-ar$ repeated $aj$ times and $aj$ repeated $ar-aj$ times. Write $\lambda(l_\bullet)=(\lambda_1,\dots,\lambda_{ar})$ and define
$$\mu(\cF_\bullet,l_\bullet,\varphi)=-\min\{\lambda_{ar_{i_1}}+\cdots+\lambda_{ar_{i_b}}:\varphi|_{(\cF_{i_1}\otimes\cdots\otimes\cF_{i_b})^{\oplus c}}\neq 0\},$$
where the min is taken over all $i_1,\dots,i_b\in\{1,\dots,p+1\}$, possibly nondistinct. (All the ranks are scaled by $a$ since they might not be integers).

\begin{definition} (\cite{gomez-sols}, definition 1.3) Let $\delta\in\QQ_{>0}$. A tensor field $(\cF,\varphi)$ is $\delta$-semistable if $\cF$ is torsion-free and
\begin{equation}\label{deltass}\sum_{i=1}^pl_i(\rk\cF_i\cdot\mP_\cF-\rk\cF\cdot\mP_{\cF_i})+\delta\mu(\cF_\bullet,l_\bullet,\varphi)\geq 0\end{equation}
for every weighted filtration $(\cF_\bullet,l_\bullet)$. We say $(\cF,\varphi)$ is $\delta$-stable if strict inequality holds for every such $(\cF_\bullet,l_\bullet)$.\end{definition}

\subsection{Tensor field associated to a singular $G$-bundle}\label{associatedtensorfield}

Recall that $\ttilde{\cE}$ is the universal quotient sheaf on the quot scheme $Q$. Picking a $d$ such that $\Sym^*(V\otimes\ttilde{\cE})^G$ is generated in degree $\leq d$, we may associate to every point $(\cE,\tau)\in\mbf{Q}$ a tensor field
\begin{equation}\label{tfmap}\bigoplus_{\vec{a}}(V\otimes\cE)^{\otimes d!}\to\bigoplus_{\vec{a}}\bigotimes_{i=0}^d\Sym^{a_i}\Sym^i(V\otimes\cE)\xrightarrow{R_G}\bigoplus_{\vec{a}}\bigotimes_{i=0}^d\Sym^{a_i}\Sym^i(V\otimes\cE)^G\xrightarrow{\tau}\cO_{C_0},\end{equation}
where $R_G$ is the Reynolds operator and the sum is over $\vec{a}=(a_0,\dots,a_d)$ with $a_1+2a_2+\cdots+da_d=d!$. It is shown in \cite{mcthesis}, sect. 2.2.3 (or \cite{gomezlangerschmittsols}, sect. 5) that this assignment is injective on $\bG_m$-equivalence classes and defines a proper injective morphism from $\mbf{Q}$ into a parameter space of tensor fields. We therefore define:

\begin{definition} A point of $\mbf{Q}$ is $\delta$-semistable if the associated tensor field is $\delta$-semistable.\end{definition}

\subsection{Characterization of GIT semistability}\label{sectionss}

Let $Q(1)$ be the closure of the set of torsion-free, uniform rank $r$ quotient sheaves in $Q$, and let $\mbf{Q}(1)=\mbf{Q}\times_QQ(1)$. Recall the $\SL(W)$-linearized ample line bundle $L=L_m(k_1,k_2)$ on $\mbf{Q}$. Following Simpson's approach for semistable sheaves (\cite{simpson}), Gomez-Sols proved that $L$-semistability and $\delta$-semistability coincide in the following sense.

\begin{theorem}\label{ss}\emph{(\cite{gomez-sols}, theorem 3.6)} Assume $m,n$ are sufficiently large. There is a number $\alpha$ such that, if $\frac{k_2}{k_1}=\alpha$, then a point $(q,\tau)\in\mbf{Q}(1)$ is $L$-semistable if and only if $(\cE,\tau)$ is a $\delta$-semistable singular $G$-bundle and $W\to H^0(\cE\otimes\cL^n)$ is an isomorphism.\end{theorem}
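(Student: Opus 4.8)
The plan is to follow the GIT strategy of Simpson \cite{simpson}, in the form adapted to tensor fields by Gomez--Sols \cite{gomez-sols}. The engine is the Hilbert--Mumford numerical criterion: a point $x=(q,\tau)\in\mbf{Q}(1)$ is $L$-semistable for $L=L_m(k_1,k_2)$ if and only if $\mu^L(\lambda,x)\ge 0$ for every one-parameter subgroup $\lambda$ of $\SL(W)$. Up to conjugacy, one-parameter subgroups of $\SL(W)$ are the same data as weighted filtrations $0\subsetneq W_1\subsetneq\cdots\subsetneq W_{p+1}=W$ equipped with positive rational weights $l_\bullet$, so the first step is to set up this dictionary and thereby rephrase $L$-semistability of $x$ as a family of inequalities indexed by weighted filtrations of $W$.

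The second step is to compute the Mumford weight. Since $L=L_m(k_1,k_2)$ is pulled back along the embedding \eqref{embQ'} from $\Gr(W\otimes\mH^0(\cL^m),f)\times\bP^N$, the weight splits additively as $\mu^L(\lambda,x)=k_1\,\mu^{\Gr}(\lambda,x)+k_2\,\mu^{\bP^N}(\lambda,x)$. The Grassmannian term is computed exactly as in Simpson's construction in terms of the dimensions $\dim(W_i\otimes\mH^0(\cL^m))$ and the image subsheaves $\cE_i:=q(W_i\otimes\cL^{-n})\subseteq\cE$; for $m$ large it is governed by the leading expression $\sum_i l_i\bigl(\dim W_i\cdot\mP(m+n)-\dim W\cdot h^0(\cE_i\otimes\cL^{m+n})\bigr)$ up to lower-order corrections. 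The projective term $\mu^{\bP^N}(\lambda,x)$ is, by the explicit construction of the associated tensor field in \eqref{tfmap} together with the Gomez--Sols recipe for $\mu(\cF_\bullet,l_\bullet,\varphi)$, a fixed positive multiple of $\mu(\cE_\bullet,l_\bullet,\varphi)$ for the filtration $\cE_\bullet$ induced on $\cE$ and its associated tensor field $\varphi$. Assembling these and absorbing constants, the condition $\mu^L(\lambda,x)\ge 0$ for all $\lambda$ becomes precisely the $\delta$-semistability inequality \eqref{deltass} with $\delta$ a positive multiple of $k_2/k_1$; equating the two then forces the value $\alpha$ in the statement. (Throughout one scales ranks by $a=\deg\cL$ as in the definition of $\mu(\cF_\bullet,l_\bullet,\varphi)$, since on a reducible nodal curve $\rk\cE_i$ need not be an integer.)

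The third step is to install the boundedness results that make the passage between filtrations of $W$ and filtrations of $\cE$ exact, and that force $W\to\mH^0(\cE\otimes\cL^n)$ to be an isomorphism at a semistable point. If this evaluation map fails to be injective, its kernel $W'$ gives a weighted filtration with image subsheaf $\cE'=0$, which is strongly destabilizing once $n$ is large; if it fails to be surjective, one invokes that the family of $\delta$-semistable tensor fields $(\cE,\tau)$ with Hilbert polynomial $\mP$ is bounded (a Le Potier--Simpson type estimate, using that subsheaves of such $\cE$ have bounded slope), so that for $n\gg0$ one has $h^0(\cE\otimes\cL^n)=\mP(n)$ and surjectivity of $W\otimes\cL^{-n}\to\cE$ is forced. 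By the same boundedness, for $n\gg0$ every subsheaf $\cE'\subseteq\cE$ occurring in a destabilizing filtration satisfies $h^0(\cE'\otimes\cL^n)=\mP_{\cE'}(n)$, so the filtration $W_i:=W\cap\mH^0(\cE_i\otimes\cL^n)$ recovers $\cE_\bullet$, and the two inequalities match term by term.

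The main obstacle is precisely the uniform error control in Step 2: one must produce a single pair $(m,n)$, depending only on $\mP$ and the embedding $G\subset\GL(V)$, for which the leading sheaf-theoretic term of $\mu^{\Gr}$ dominates all of the correction terms simultaneously over the bounded but infinite family of relevant sheaves and filtrations. This is the technical heart of Simpson's method and rests on the Le Potier--Simpson bounds on sections of subsheaves of a bounded family; the additional content over Simpson is checking that adjoining $k_2\,\mu^{\bP^N}$ does not spoil these estimates, which holds because that term is of the same order as the leading sheaf term and enters with a sign that is absorbed into the choice $\delta=\delta(k_2/k_1)$.
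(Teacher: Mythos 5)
The paper offers no proof of Theorem \ref{ss}: it is quoted verbatim from G\'omez--Sols (\cite{gomez-sols}, theorem 3.6), and your outline is exactly the Simpson-style argument that the cited source carries out — Hilbert--Mumford weights split over the $\Gr\times\bP^N$ factors of the embedding (\ref{embQ'}), matched against the $\delta$-semistability inequality (\ref{deltass}) with $\delta$ determined by $k_2/k_1$, and Le Potier--Simpson boundedness forcing $W\xrightarrow{\sim}\mH^0(\cE\otimes\cL^n)$ and the filtration dictionary. So your proposal is correct in outline and coincides with the approach of the reference the paper relies on.
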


\subsection{Semistability for large values of $\delta$}

By theorem \ref{ss}, we thus get a projective moduli space of singular $G$-bundles $\mbf{Q}(1)\sslash_L\SL(W)$. The downside is that the definition of $\delta$-semistability is somewhat difficult, and the $\delta$-semistable singular $G$-bundles are too large a class of objects, in that we may not get a $G$-bundle over $C_0-S$ or even over a dense open subset of $C_0$. However, Schmitt and Mu\~{n}oz-Casta\~{n}eda have recently shown that, for large values of $\delta$, the moduli space parametrizes only \emph{honest} singular $G$-bundles which are semistable in the sense of definition \ref{honestss}. There are analagous results for smooth varieties of arbitrary dimension, even in positive characteristic (\cite{gomezlangerschmittsols}, theorems 5.4.1 and 5.4.4).

\begin{theorem}\label{sch-mc}\emph{(\cite{schmc}, theorem 3.3)} There is a $\delta_0>0$ such that, for $\delta>\delta_0$, the following hold:\begin{enumerate}
\item any $\delta$-semistable singular $G$-bundle with Hilbert polynomial $P$ is honest;
\item for any honest singular $G$-bundle with Hilbert polynomial $P$, $\delta$-(semi)stability is equivalent to (semi)stability as in definition \ref{honestss}.\end{enumerate}\end{theorem}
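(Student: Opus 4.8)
The plan is to run everything through the GIT picture of Sections~\ref{paramspaces}--\ref{sectionss}. For a fixed $\delta$ choose $m,n$ and the polarization $L=L_m(k_1,k_2)$ with $k_2/k_1=\alpha$ as in Theorem~\ref{ss}, so that a point of $\mbf{Q}(1)$ with $W\xrightarrow{\sim}\mH^0(\cE\otimes\cL^n)$ is $L$-semistable iff $(\cE,\tau)$ is $\delta$-semistable, and the Hilbert--Mumford weight along the one-parameter subgroup of $\SL(W)$ attached to a weighted filtration $(\cE_\bullet,l_\bullet)$ is a positive multiple of the quantity $M_\delta(\cE_\bullet,l_\bullet)$ on the left of~(\ref{deltass}) (first term $\delta$-independent, second term linear in $\delta$). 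Since $\mbf{Q}(1)$ is of finite type, the underlying sheaves $\cE$ occurring in it form a bounded family; hence, taking the $\cE_i$ saturated, the ranks and degrees of the subsheaves that can enter a relevant weighted filtration lie in a finite set depending only on $P$, so the first term of $M_\delta$ is bounded in absolute value by a constant $B=B(P)$. Both claims of the theorem then follow from analysing the $\mu$-term of $M_\delta$ and letting $\delta\to\infty$.

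\emph{(1), contrapositive.} Suppose $(\cE,\tau)$ is $\delta$-semistable; by Theorem~\ref{ss} it is represented by an $L$-semistable point of $\mbf{Q}(1)$, so $\cE$ lies in the bounded family above and $M_\delta(\cE_\bullet,l_\bullet)\ge0$ for every weighted filtration. Since $\cE$ is torsion-free of uniform rank it is already locally free off the nodes, so if $(\cE,\tau)$ were not honest then at the generic point $\eta_i$ of some component $\hhat\tau_{\eta_i}\colon\cE_{\eta_i}\to V^*\otimes K_i$ would not represent a point of $\Isom/G$; there is then a nontrivial one-parameter subgroup $\lambda$ of $G$ (acting on $V^*$) for which $\lim_{t\to0}\lambda(t)\cdot\hhat\tau_{\eta_i}$ exists in $\Hom\sslash G$, and pulling the weight filtration of $\lambda$ back along $\hhat\tau$ and saturating yields a weighted filtration $(\cE_\bullet,l_\bullet)$ of $\cE$. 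As the associated tensor field $\varphi$ of~(\ref{tfmap}) is assembled from $G$-invariant tensors, the degeneracy of $\hhat\tau_{\eta_i}$ makes $\varphi$ vanish on strictly more flag pieces of this filtration than it would for an honest $\tau$, and computing $\mu$ from its definition this extra vanishing yields $\mu(\cE_\bullet,l_\bullet,\varphi)\le-\varepsilon$ with $\varepsilon=\varepsilon(P)>0$. Then $M_\delta(\cE_\bullet,l_\bullet)\le B-\delta\varepsilon<0$ once $\delta>B/\varepsilon$, contradicting $\delta$-semistability; so for $\delta>\delta_0':=B/\varepsilon$, $\delta$-semistable implies honest.

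\emph{(2).} Fix an honest $(\cE,\tau)$; by Proposition~\ref{goodv} (with our standing choice of $V$) it is a genuine $G$-bundle $E$ over the smooth locus, in particular at the $\eta_i$. The key reduction is that $M_\delta\ge0$ need only be checked on \emph{$G$-adapted} weighted filtrations --- those built from a nontrivial tuple $\vec\lambda$ of one-parameter subgroups of $G$ together with a reduction $\vec s$ of $(\cE,\tau)$ to $\vec\lambda$, via the ``direct sum of weighted flags'' recipe preceding Definition~\ref{honestss} --- because $\varphi$ is $G$-invariant, so any weighted filtration can be replaced by a $G$-adapted one without raising the first term of $M_\delta$ or lowering $\mu$, whence the infimum of $M_\delta$ is attained on $G$-adapted data. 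For such a filtration one evaluates $\mu$ explicitly: the restriction of $\varphi$ to a flag piece is governed by the $\vec\lambda$-weights on $V^*$, and after the bookkeeping (using, as in the proof of Proposition~\ref{lind}, that each $\lambda^i$ lands in $\SL(V)$) the term $\delta\,\mu(\cE_\bullet,l_\bullet,\varphi)$ comes out as a positive constant times $\delta\sum_{j=1}^{q+1}\mu_j\,\chi(\cE_j/\cE_{j-1})$, i.e.\ $\delta$ times the left side of~(\ref{ssalt2}), whereas the first term of $M_\delta$ is bounded by $B(P)$ and so negligible for $\delta$ large. Therefore, for $\delta$ larger than some $\delta_0''(P)$, ``$M_\delta(\cE_\bullet,l_\bullet)\ge0$ for all $G$-adapted filtrations'' is equivalent to ``$\sum_j\mu_j\chi(\cE_j/\cE_{j-1})\ge0$ for every reduction $(\vec\lambda,\vec s)$'', which is exactly semistability in the sense of Definition~\ref{honestss}; the same argument with strict inequalities gives the stable case. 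Setting $\delta_0=\max(\delta_0',\delta_0'')$ finishes the proof.

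\emph{Expected main obstacle.} The hard part is the $G$-adaptedness reduction together with the exact evaluation of $\mu$ on $G$-adapted filtrations: one must show both that nothing is lost by restricting to $G$-adapted data --- a statement about how the Reynolds-operator construction in~(\ref{tfmap}) interacts with one-parameter subgroups of $\SL(V)$ that are not one-parameter subgroups of $G$ --- and that on $G$-adapted data $\mu$ reproduces \emph{precisely} the functional $\sum_j\mu_j\chi(\cE_j/\cE_{j-1})$ of Definition~\ref{honestss}, up to a single positive constant, rather than some inequivalent positive combination. This is where the specific normalisation in~(\ref{tfmap}) (the choice of $d$ and of the multi-indices $\vec a$) must be used, and it is also where one must pin down that the $\varepsilon(P)$ of part~(1) is genuinely bounded away from $0$ uniformly.
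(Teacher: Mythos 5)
First, be aware that the paper does not prove this statement at all: it is quoted from \cite{schmc}, Theorem 3.3, so there is no internal argument to compare yours against, and your text has to stand on its own as a reconstruction of that external proof. Your overall strategy (Hilbert--Mumford analysis of the associated tensor field via Theorem~\ref{ss}, boundedness of the family with Hilbert polynomial $P$, and an asymptotic $\delta\to\infty$ comparison) is indeed the strategy of \cite{schmc} and \cite{gomezlangerschmittsols}. But as written the argument has concrete gaps and one structural error, so it is an outline rather than a proof.

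The error is in part (2): you claim that on $G$-adapted filtrations the term $\delta\,\mu(\cE_\bullet,l_\bullet,\varphi)$ is a positive constant times $\delta\sum_j\mu_j\chi(\cE_j/\cE_{j-1})$ while the sheaf-theoretic term is negligible. This cannot be correct, because $\mu$ as defined depends only on the ranks $r_i$, the weights, and the vanishing pattern of $\varphi$ on flag pieces --- it involves no Euler characteristics, hence no degrees --- whereas $\sum_j\mu_j\chi(\cE_j/\cE_{j-1})$ is (by the computation behind Proposition~\ref{lind}) just a rewriting of the sheaf term $\sum_j m_j(\chi(\cE)\rk\cE_j-\chi(\cE_j)\rk\cE)$ of Definition~\ref{honestss}. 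In the actual proof the roles are the opposite of what you assert: for weighted filtrations induced by reductions $(\vec\lambda,\vec s)$ the $G$-invariance of the tensors entering~(\ref{tfmap}) forces $\mu=0$, so the $\delta$-independent sheaf term survives and is exactly the functional of Definition~\ref{honestss}; filtrations not of this type have $\mu>0$ and become harmless once $\delta$ is large. Separately, your uniform bound ``$|\text{first term of }M_\delta|\le B(P)$'' over all weighted filtrations is false as stated: the weights $l_i$ are arbitrary positive rationals and the degrees of saturated subsheaves are only bounded above, not below, so before extracting any constant one must first reduce to a controlled class of filtrations (normalized weights, bounded ranks, and --- the delicate point --- showing the subsheaves occurring in critical filtrations form a bounded family). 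Finally, the two steps you flag as the ``expected main obstacle'' --- the uniform $\varepsilon(P)>0$ in part (1), obtained from the instability flag of the generic fibre of $\hhat\tau$ together with finiteness of the relevant weight data, and the dichotomy replacing your ``$G$-adaptedness'' replacement argument in part (2) --- are precisely the technical content of \cite{schmc}; deferring them means the theorem has been restated, not proved.
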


\subsection{Set-up for finite generation}\label{setupforfg}

We can now define the moduli space which will be used to prove theorem \ref{mainthm} in the next section. We introduce the following schemes related to $\mbf{Q}$:

\begin{itemize}
\item $\mbf{Q}^0\subseteq\mbf{Q}(1)$ the open subset parametrizing torsion-free, honest singular $G$-bundles such that the map $W\to\mH^0(\cE\otimes\cL^n)$ is an isomorphism (in particular $h^1(\cE\otimes\cL^n)=0$);
\item $\mbf{Q}^G\subseteq\mbf{Q}^0$ the open subscheme of $G$-bundles;
\item $\mbf{M}$ the normalization of $\bbar{\mbf{Q}^G}$ (closure taken in $\mbf{Q}$);
\item $\mbf{M}^0\subseteq\mbf{M}$ the preimage of $\mbf{Q}^0$.
\end{itemize}

\noindent Define $\cX=\cX_m(k_1,k_2)$ as the GIT quotient 
$$\mbf{M}\sslash_{L_m(k_1,k_2)}\SL(W)$$ 
with respect to the following choices:
\begin{itemize}
\item $\cL$ is an ample line bundle on $C_0$;
\item $\mP=\mP_{\cO^{\oplus r}}$;
\item $\delta>\delta_0$ as in theorem \ref{sch-mc} and $m,n,k_1,k_2$ are chosen as in theorem \ref{ss}.
\end{itemize}

\noindent Remember, as mentioned in the introduction, that we are not sure if $\bbar{\mbf{Q}^G}$ contains all of the honest singular $G$-bundles! But, it is necessary for us to work with this smaller moduli space, e.g. the key result of section \ref{surjection} -- proposition \ref{lift} -- only applies to singular $G$-bundles which are in the closure of $\Bun_G(C_0)$.

\section{Proof of theorem \ref{mainthm}}\label{proofmainthm}

In the previous section, we constructed a polarized moduli space $(\cX,L)$, where $L=L_m(k_1,k_2)$ (or a sufficiently large multiple thereof that descends to $\cX$). After identifying $L$ with determinant of cohomology, we will establish an injection
$$\mH^0(\cX,L)\hookrightarrow\mH^0(\Bun_G(C_0),\cD(V)^l)$$
for sufficiently divislbe $l$, and show that this map is an isomorphism by way of theorem \ref{sectionextensiontheorem}. This will prove theorem \ref{mainthm}.

\subsection{Line bundle identities}\label{linebundleidentities}

Let $\bP^N$ be the projective space from section \ref{paramspaces} such that $\mbf{Q}\hookrightarrow Q\times\bP^N$, and recall that $L_m(k_1,k_2)$ is a tensor product of line bundles
$$L_m(k_1,k_2)=\cO_Q(k_1)\boxtimes\cO_{\bP^N}(k_2).$$
For a vector bundle $\cG$ on $C_0$, let $\cD_\cG$ be the line bundle on $Q$ whose fiber over a quotient sheaf $\cE$ is the determinant of cohomology of $\cE\otimes\cG$, i.e.
$$\cD_\cG|_\cE=\det\mH^0(\cE\otimes\cG)^*\otimes\det\mH^1(\cE\otimes\cG).$$ 
The standard determinant of cohomology line bundle (corresponding to $\cG=\cO_{C_0}$) will continue to be denoted $\cD$.

\begin{lemma}\label{lblemma}The line bundles $\cD_\cG$ have the following properties.\begin{enumerate}[label=(\roman*)]
\item $\cD_\cG$ has a natural $\GL(W)$-linearization, where a scalar matrix $t\in\GL(W)$ acts on $\cD_\cG$ as multiplication by $t^{-\chi(\cE\otimes\cG)}$.
\item $\cO_Q(-1)=\cD_{\cL^{m+n}}$.
\item If $p\in C_0$ is a smooth point, and $\cE$ is a torsion-free sheaf on $C_0$, then there is a natural isomorphism
$$\cD(\cE\otimes\cO(p))\cong\cD(\cE)\otimes\det(\cE|_p)^*,$$
which induces a $\GL(W)$-equivariant isomorphism of line bundles
$$\cD_{\cO(p)}\cong\cD\otimes\det(\ttilde{\cE}|_p)^*,$$
where $\ttilde{\cE}$ is the universal quotient sheaf on $Q\times C_0$.
\end{enumerate}\end{lemma}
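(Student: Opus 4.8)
\textbf{Proof proposal for Lemma \ref{lblemma}.}

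The plan is to establish the three items using standard properties of the determinant of cohomology functor, working one at a time. For item (i), I would recall that the determinant of cohomology is a functor on perfect complexes that is multiplicative in exact triangles, and that for the universal quotient $\ttilde{\cE}$ on $Q\times C_0$ the group $\GL(W)$ acts on $W\otimes\cL^{-n}$ by its standard action, hence on $\ttilde{\cE}$ via the quotient map, hence on $R\pi_*(\ttilde{\cE}\otimes\cG)$ by functoriality. A scalar matrix $t\in\GL(W)$ acts on $W\otimes\cL^{-n}$, and therefore on $\ttilde{\cE}$ and on the complex $R\pi_*(\ttilde{\cE}\otimes\cG)$, simply as multiplication by $t$; taking $\det$ of a complex whose total rank (Euler characteristic) is $\chi(\cE\otimes\cG)$ multiplies by $t^{\chi(\cE\otimes\cG)}$, and since $\cD_\cG$ is the determinant of the \emph{dual}-twisted expression $\det\mH^0{}^*\otimes\det\mH^1$ the scalar acts as $t^{-\chi(\cE\otimes\cG)}$. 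This is a routine check once the linearization is set up.

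For item (ii), I would use Grothendieck's construction of the quot scheme: the Plücker-type embedding $Q\hookrightarrow\Gr(W\otimes\mH^0(\cL^m),f)$ sends $(q\colon W\otimes\cL^{-n}\twoheadrightarrow\cE)$ to $(W\otimes\mH^0(\cL^m)\twoheadrightarrow\mH^0(\cE\otimes\cL^{m+n}))$, and by definition the restriction of $\cO_Q(1)$ (the very ample sheaf of this embedding) is $\det$ of the universal quotient $\pi_*(\ttilde{\cE}\otimes\cL^{m+n})$. For $m,n$ large, $\mH^1(\cE\otimes\cL^{m+n})=0$ on the relevant locus, so $R\pi_*(\ttilde{\cE}\otimes\cL^{m+n})$ is concentrated in degree zero and $\det R\pi_*(\ttilde{\cE}\otimes\cL^{m+n})=\det\pi_*(\ttilde{\cE}\otimes\cL^{m+n})$. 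Hence $\cO_Q(1)=\det\mH^0(\ttilde{\cE}\otimes\cL^{m+n})$, which is precisely $(\cD_{\cL^{m+n}})^{-1}$, i.e. $\cO_Q(-1)=\cD_{\cL^{m+n}}$. One must be slightly careful about whether the embedding uses the quotient or its dual, but the sign/inverse conventions are fixed exactly so that this identity holds; I would simply pin down the convention and note the match.

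For item (iii), at the level of a single torsion-free sheaf $\cE$ and a smooth point $p$, I would use the exact sequence $0\to\cE\to\cE\otimes\cO(p)\to (\cE\otimes\cO(p))|_p\to 0$; since $p$ is smooth and $\cE$ is torsion-free, $\cE$ is locally free near $p$, so $(\cE\otimes\cO(p))|_p\cong\cE|_p$ is a genuine vector space of dimension $r$, and multiplicativity of $\cD$ in this sequence gives $\cD(\cE\otimes\cO(p))\cong\cD(\cE)\otimes\cD(\iota_{p*}\cE|_p)$ where $\cD$ of a skyscraper sheaf supported at a point is $\det(\cE|_p)^*$ (because $\mH^0$ is $\cE|_p$ and $\mH^1$ vanishes). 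Applying this relatively to the universal sheaf $\ttilde{\cE}$ on $Q\times C_0$ — noting that $Q\times\{p\}\to Q$ is a section along which $\ttilde{\cE}$ need not be locally free in general, but is along the image of $\mbf{Q}^0$, or more to the point the formula $\cD_{\cO(p)}\cong\cD\otimes\det(\ttilde{\cE}|_p)^*$ holds wherever the sequence stays exact fiberwise — gives the line-bundle statement, and the $\GL(W)$-equivariance is automatic from functoriality of everything in sight.

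The main obstacle I anticipate is bookkeeping rather than mathematics: keeping the duality and inverse conventions straight so that all three identities come out with the correct signs (e.g. that $\cO_Q(-1)$, not $\cO_Q(+1)$, equals $\cD_{\cL^{m+n}}$, and that the scalar acts with exponent $-\chi$ rather than $+\chi$), and making sure in (iii) that the point $p$ is chosen in the smooth locus so that $\ttilde{\cE}$ restricted to $Q\times\{p\}$ is locally free (hence $\det(\ttilde{\cE}|_p)$ actually makes sense) — which is exactly why the hypothesis ``$p$ a smooth point'' appears. None of these steps requires a hard argument; the content is entirely in the standard formalism of the determinant of cohomology and Grothendieck's construction of $Q$.
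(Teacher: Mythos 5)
Your proposal is correct and follows essentially the same route as the paper: the paper simply cites (i) and (ii) as standard facts (Huybrechts--Lehn), and your arguments are exactly the standard ones behind those citations, while for (iii) you use the same exact sequence $0\to\cE\to\cE\otimes\cO(p)\to\cE|_p\to 0$ and multiplicativity of the determinant of cohomology, just as the paper does. Your added remarks on conventions and on local freeness of $\ttilde{\cE}$ along $Q\times\{p\}$ are sensible but do not change the argument.
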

\begin{proof} Items (i), (ii) are well known, see e.g. \cite{huylehn}. For (iii), since $\cE$ is a vector bundle in a neighborhood of $p$, there is an exact sequence
$$0\to\cE\to\cE\otimes\cO(p)\to\cE|_p\to0.$$
Then use the fact that determinant of cohomology is multiplicative with respect to short exact sequences.\end{proof}

\begin{proposition}\label{lbidentities}Assume $n$ is divisible by $g-1$, so that $\chi(\cE)$ divides $P(n)$, and let
$$l=-\frac{P(m+n)P(n)}{\chi(\cE)}$$
(note that $\chi(\cE)=r(1-g)$ is negative, so $l$ is positive). Then, over $\tbf{Q}^0$, there is a $\GL(W)$-equivariant isomorphism
$$\cO_Q(P(n))\cong\cD^l.$$\end{proposition}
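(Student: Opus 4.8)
The plan is to relate the tautological line bundle $\cO_Q(-1)$ on the quot scheme to a determinant-of-cohomology line bundle, and then to use the factorization $\cO(p) \cong \cO_{C_0}$ (after enough twists) on the smooth locus of $C_0$ together with the trivialization $\det \hhat{\tau}$ coming from a $G$-bundle structure. Concretely, by Lemma \ref{lblemma}(ii) we have $\cO_Q(-1) = \cD_{\cL^{m+n}}$, so $\cO_Q(-P(n)) = \cD_{\cL^{m+n}}^{P(n)}$. The task is therefore to identify $\cD_{\cL^{m+n}}^{P(n)}$ with $\cD^{l}$ up to the $\GL(W)$-linearization, over the open set $\mbf{Q}^0$ where $\cE$ is a $G$-bundle (in particular locally free with $\det \hhat{\tau}: \det \cE \xrightarrow{\sim} \cO_{C_0}$).

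First I would reduce the comparison between $\cD_{\cL^{m+n}}$ and $\cD$ to a comparison of their "twisting factors'' coming from Lemma \ref{lblemma}(iii). Over $\mbf{Q}^0$ (or after passing to a suitable open cover / étale neighborhood) we may write $\cL^{m+n} \cong \cO_{C_0}(D)$ for an effective divisor $D$ supported on the smooth locus of $C_0$, of degree $\deg(\cL^{m+n}) = (m+n)\deg\cL$. Iterating Lemma \ref{lblemma}(iii) point-by-point along $D$ gives a $\GL(W)$-equivariant isomorphism
\[
\cD_{\cL^{m+n}} \cong \cD \otimes \bigotimes_{p \in D} \det(\ttilde{\cE}|_p)^*.
\]
Now the key input is that on $\mbf{Q}^0$ the universal sheaf $\ttilde\cE$ carries the universal singular $G$-bundle structure $\ttilde\tau$, which over $\mbf{Q}^0$ is an honest (indeed genuine, on the smooth locus) $G$-bundle structure, hence supplies a $\GL(W)$-equivariant trivialization $\det \ttilde{\cE}|_p \cong \cO_{\mbf{Q}^0}$ for each smooth point $p$ — up to exactly the scaling by a power of the scalar in $\GL(W)$ recorded in Lemma \ref{lblemma}(i) (since $G \subset \SL(V)$, so $\det$ of the associated bundle is canonically trivial but the $\GL(W)$-weight is nonzero because $\det$ of a quotient of $W \otimes \cL^{-n}$ picks up a power of the scalar). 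Thus each factor $\det(\ttilde\cE|_p)^*$ is $\GL(W)$-equivariantly trivial up to a character, and collecting the characters over all $\deg D$ points gives $\cD_{\cL^{m+n}} \cong \cD \otimes (\text{character})$ on $\mbf{Q}^0$.

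Raising to the $P(n)$-th power and bookkeeping the characters, I expect to get $\cO_Q(P(n)) = \cD_{\cL^{m+n}}^{P(n)} \cong \cD^{P(n)} \otimes (\text{character})$; the exponent $l = -P(m+n)P(n)/\chi(\cE)$ then has to emerge from matching the $\GL(W)$-linearizations. The point is that the "character'' discrepancy is forced to be a power of $\cD$ itself: both $\cO_Q(P(n))$ and $\cD$ have canonical $\GL(W)$-linearizations in which a scalar $t$ acts by a specific power of $t$ (for $\cD$ it is $t^{-\chi(\cE)}$ by Lemma \ref{lblemma}(i) with $\cG = \cO_{C_0}$; for $\cO_Q(1)$ one computes the weight from Grothendieck's embedding, getting $t^{P(m+n)}$ up to sign), and since $\Pic$ of $\mbf{Q}^0$ modulo this linearization data is rigid enough, two line bundles with proportional underlying bundles and matching linearizations must agree. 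So the bulk of the argument is: (a) identify the underlying line bundles up to a character using Lemma \ref{lblemma}(iii) and the $G$-bundle trivialization of determinants, and (b) compute the two $\GL(W)$-weights and check that $\cD^l$ is the unique power of $\cD$ whose weight matches that of $\cO_Q(P(n))$, which pins down $l = -P(m+n)P(n)/\chi(\cE)$.

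The main obstacle I anticipate is step (a): making the identification $\cD_{\cL^{m+n}} \cong \cD$ (up to character) actually $\GL(W)$-equivariant and globally well-defined over $\mbf{Q}^0$, rather than just fiberwise. One cannot literally write $\cL^{m+n} = \cO_{C_0}(D)$ with a single $D$ and have the resulting isomorphism be canonical; instead I would work with the universal divisor or, more cleanly, avoid choosing $D$ at all by arguing that $\det \ttilde{\cE}$ as a line bundle on $\mbf{Q}^0 \times C_0$ is $\GL(W)$-equivariantly pulled back from $\mbf{Q}^0$ (because of the $G$-bundle structure forcing $\det \cE \cong \cO_{C_0}$ fiberwise, so $\det \ttilde\cE \cong \pr_{\mbf{Q}^0}^* \cN$ for a line bundle $\cN$ on $\mbf{Q}^0$ with an explicit $\GL(W)$-weight), and then computing $\cD_{\cL^{m+n}}$ versus $\cD$ via the projection formula and Riemann–Roch for the relative curve $\mbf{Q}^0 \times C_0 \to \mbf{Q}^0$. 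This Riemann–Roch / Grothendieck–Riemann–Roch computation, tracking the $\GL(W)$-weights carefully, is where the precise value of $l$ will fall out, and where one must be careful that $n$ divisible by $g-1$ is exactly what makes $\chi(\cE) = r(1-g)$ divide $P(n)$ so that $l$ is an integer.
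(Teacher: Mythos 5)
Your toolbox is the right one (lemma \ref{lblemma}(iii), the trivialization of $\det\ttilde{\cE}|_p$ coming from $\det\hhat{\tau}$ over the smooth locus, and matching central characters of $\GL(W)$ at the end), but there is a structural gap: your step (a) only relates $\cO_Q(-1)=\cD_{\cL^{m+n}}$ to $\cD^{+1}$ up to a character, hence $\cO_Q(P(n))\cong\cD^{-P(n)}\otimes(\text{character})$ as underlying line bundles. The proposition asserts $\cO_Q(P(n))\cong\cD^{l}$ with $l=-P(m+n)P(n)/\chi(\cE)$, a completely different (positive) exponent; so the statement itself forces $\cD^{\,l+P(n)}$ to be trivial as a line bundle on $\mbf{Q}^0$, and no amount of bookkeeping of $\GL(W)$-weights can supply this. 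Comparing central characters only tells you which power of $\cD$ has the right linearization; to upgrade that to an isomorphism you must first know the underlying non-equivariant bundles agree, and your appeal to ``$\Pic$ of $\mbf{Q}^0$ being rigid enough'' does not establish this — two equivariant line bundles with equal central character need not be isomorphic.

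The missing ingredient, which is the first (and decisive) observation in the paper's proof, is the defining property of $\mbf{Q}^0$ that $W\to\mH^0(\cE\otimes\cL^n)$ is an isomorphism with $h^1(\cE\otimes\cL^n)=0$: this makes $\cD_{\cL^n}\cong\det W^*\otimes\cO_Q$, i.e.\ \emph{trivial} as a line bundle over $\mbf{Q}^0$. Feeding this into lemma \ref{lblemma}(iii) (together with the triviality of the factors $\det(\ttilde{\cE}|_p)$ at smooth points, which you do use) shows that $\cD$ itself, and likewise $\cO_Q(1)=\cD_{\cL^{m+n}}$, are trivial over $\mbf{Q}^0$ as non-equivariant line bundles. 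Only after this does the problem reduce to comparing linearizations on the trivial bundle, where the difference is a character of $\GL(W)$ determined by the center: $t$ acts on $\cO_Q(P(n))=\cD_{\cL^{m+n}}^{-P(n)}$ by $t^{P(m+n)P(n)}$ and on $\cD^l$ by $t^{-l\chi(\cE)}$, which pins down $l$. Your GRR variant at the end suffers from the same issue (it again compares $\cO_Q(-1)$ with the first power of $\cD$, and is moreover delicate since $\ttilde{\cE}$ is not locally free at the nodes); the divisibility hypothesis on $n$ is indeed only there to make $l$ an integer, as you say.
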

\begin{proof}First note, since the map $W\to\mH^0(\cE\otimes\cL^n)$ is an isomorphism for $\cE\in Q^0$, the line bundle $\cD_{\cL^n}$ is trivial over $Q^0$, isomorphic to $\det W^*\otimes\cO_Q$. Furthermore, over $\tbf{Q}^0\times(C_0-S)$, the universal quotient sheaf $\ttilde{\cE}$ is a vector bundle with trivial determinant, so for any smooth point $p\in C_0$ the line bundle $\det(\ttilde{\cE}|_p)$ is trivial over $\tbf{Q}^0$. These two observations combined with (iii) of lemma \ref{lblemma} show that $\cD$ itself is trivial over $\tbf{Q}^0$, as is $\cO_Q(1)=\cD_{\cL^{m+n}}$.

Hence, in order to show $\cO_Q(P(n))\cong\cD^l$ as equivariant line bundles, we just have to show that they have the same linearization. The difference in linearization is given by a character of $\GL(W)$, which is determined by the action of the center of $\GL(W)$. Since a scalar matrix $t\in\GL(W)$ acts on $\cO_Q(P(n))=\cD_{\cL^{m+n}}^{-P(n)}$ by $t^{P(m+n)P(n)},$ and acts on $\cD^l$ by $t^{-l\chi(\cE)}$, our choice of $l$ ensures that $\cO_Q(P(n))\cong\cD^l$.
\end{proof}

\subsection{The injection $\mH^0(\mbf{M},L)^{\SL(W)}\hookrightarrow\mH^0(\Bun_G(C_0),\cD^l)$}\label{injsections}

Recall that $\cD$ also denotes the determinant of cohomology line bundle on $\Bun_G(C_0)$ with respect to the contraction map
$$\Bun_G(C_0)\to\Coh(C_0),$$
$$E\mapsto E\times^GV.$$
\noindent In this section we prove:

\begin{proposition}\label{injprop}If $n,k_1,k_2$ are sufficiently divisible, then there is an injection
$$\mH^0(\mbf{M},L_m(k_1,k_2))^{\SL(W)}\hookrightarrow\mH^0(\Bun_G(C_0),\cD^l),$$
where $l$ is given by
\begin{equation}\label{l}l=-\frac{k_1P(m+n)}{\chi(\cE)}.\end{equation}\end{proposition}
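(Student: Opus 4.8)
The plan is to produce the injection by restricting invariant sections along an explicit chain of maps
\[
\mbf{M}\;\dashleftarrow\;\mbf{M}^0\;\longrightarrow\;\mbf{Q}^0\;\supseteq\;\mbf{Q}^G\;\xrightarrow{\sim}\;(\text{a }\GL(W)\text{-bundle over }\Bun_G(C_0)),
\]
and then peeling off the $\GL(W)$-action to land in $\mH^0(\Bun_G(C_0),\cD^l)$. First I would note that, since normalization and restriction to opens are isomorphisms on a dense open, a nonzero invariant section of $L_m(k_1,k_2)$ on $\mbf{M}$ is determined by its restriction to $\mbf{M}^0$, and hence (via the étale/normalization map $\mbf{M}^0\to\mbf{Q}^0$) by its restriction to the open locus $\mbf{Q}^G$ of honest singular $G$-bundles that are genuine $G$-bundles; so it suffices to inject $\mH^0(\mbf{Q}^G, L_m(k_1,k_2))^{\SL(W)}$ into $\mH^0(\Bun_G(C_0),\cD^l)$. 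The point of requiring $\mbf{M}$ to be the closure of $\mbf{Q}^G$ (rather than a larger moduli space) is precisely that this restriction map is injective.

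Next I would identify the line bundle on $\mbf{Q}^G$. Using Definition \ref{taxa}, $L_m(k_1,k_2)=\cO_Q(k_1)\boxtimes\cO_{\bP^N}(k_2)$. On the $G$-bundle locus $\mbf{Q}^G\subseteq\mbf{Q}^0$ the $\bP^N$-factor $\cO_{\bP^N}(1)$ is trivial (the associated tensor field comes from an honest $G$-bundle, so the section $\hhat\tau$ lands in the isomorphism locus and the projectivized data is rigidified), so $L_m(k_1,k_2)\cong\cO_Q(k_1)$ there, at least after passing to a sufficiently divisible multiple so that $k_2$ contributes nothing; for this I need $k_2$ divisible enough that $\cO_{\bP^N}(k_2)|_{\mbf{Q}^G}$ is equivariantly trivial, and $k_1$ divisible enough (by $P(n)$) to apply the next step. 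Then Proposition \ref{lbidentities} gives, over $\mbf{Q}^0\supseteq\mbf{Q}^G$, a $\GL(W)$-equivariant isomorphism $\cO_Q(P(n))\cong\cD^l$ with $l=-P(m+n)P(n)/\chi(\cE)$; raising to the power $k_1/P(n)$ yields $\cO_Q(k_1)\cong\cD^{\,l k_1/P(n)}=\cD^{\,-k_1 P(m+n)/\chi(\cE)}$, which is exactly the $l$ in equation (\ref{l}). So an invariant section of $L_m(k_1,k_2)$ over $\mbf{Q}^G$ becomes an invariant section of $\cD^l$ over $\mbf{Q}^G$.

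Finally I would descend from $\mbf{Q}^G$ to $\Bun_G(C_0)$. The scheme $\mbf{Q}^G$ carries a free (or at least such that the stack quotient is $\Bun_G(C_0)$ after also quotienting by the center, using simple-connectedness of $G$) $\SL(W)$-action, and the quotient stack $[\mbf{Q}^G/\GL(W)]$ is an open substack of $\Bun_G(C_0)$ — indeed the open locus of $G$-bundles $E$ with $h^1(E\times^GV\otimes\cL^n)=0$ and $W\xrightarrow{\sim}\mH^0(E\times^GV\otimes\cL^n)$ — whose complement has codimension $\geq 2$ by standard cohomology-and-base-change estimates on $\Bun_G(C_0)$ (which is smooth and irreducible). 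Because $\cD^l$ on $\mbf{Q}^G$ is the pullback of $\cD^l$ on $\Bun_G(C_0)$ with its natural $\GL(W)$-linearization (Lemma \ref{lblemma}(i), after choosing $n$ so the central character is absorbed, i.e. $n$ divisible by $g-1$ as in Proposition \ref{lbidentities}), $\GL(W)$-invariant sections of $\cD^l$ on $\mbf{Q}^G$ correspond to sections of $\cD^l$ on this open substack, which then extend uniquely to all of $\Bun_G(C_0)$ by normality and the codimension bound. Composing, an $\SL(W)$-invariant section on $\mbf{M}$ produces a section on $\Bun_G(C_0)$, and the whole composite is injective because each arrow restricts sections along a dense open. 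The main obstacle is bookkeeping the linearizations: one must choose $n,k_1,k_2$ divisible enough that (a) the $\bP^N$-factor is equivariantly trivial on $\mbf{Q}^G$, (b) $P(n)\mid k_1$, and (c) the central character of $\GL(W)$ on $\cD^l$ matches $\cO_Q(k_1)$, so that equivariant triviality of the auxiliary bundles in Proposition \ref{lbidentities} actually yields a genuine identification of spaces of invariant sections rather than merely of line bundles up to a character.
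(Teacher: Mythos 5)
Your overall route is the same as the paper's: restrict invariant sections from $\mbf{M}$ to the dense open $G$-bundle locus, identify the polarization there with a power of $\cD$ via Proposition \ref{lbidentities}, and use that this locus sits over an open substack of $\Bun_G(C_0)$ whose complement has codimension $\geq 2$, together with smoothness of $\Bun_G(C_0)$ (which is also exactly what makes $\mbf{Q}^G$ smooth and hence an open subscheme of $\mbf{M}$; the paper records this as Lemma \ref{qgsm}).

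However, the step where you dispose of the $\bP^N$-factor has a genuine flaw. You assert that $\cO_{\bP^N}(1)|_{\mbf{Q}^G}$ is trivial because ``the projectivized data is rigidified'' for honest $G$-bundles. There is no such rigidification: a point of $\mbf{Q}^G$ remembers $\tau$ only up to the $\bG_m$-scaling, and every representative of the scaling orbit is again an honest $G$-bundle structure, so no canonical representative exists. In fact the $\bG_m$-torsor $\ttilde{\mbf{Q}}^G\to\mbf{Q}^G$ is the relevant object: the central $\bG_m\subset\GL(W)$ acts on $\ttilde{\mbf{Q}}^G$ through the cone scaling, so over a $G$-bundle with only central automorphisms the torsor restricts to (a twist of) $\GL(W)\to\PGL(W)$, whose associated line bundle is a nontrivial torsion class in $\Pic(\PGL(W))$. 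Hence $\cO_{\bP^N}(1)|_{\mbf{Q}^G}$ is not trivial, and even for divisible $k_2$ you would still owe an equivariant-triviality argument. The paper sidesteps this by passing to the affine cone: under the divisibility hypotheses the center of $\SL(W)$ acts trivially on sections of $L$, so $\mH^0(\mbf{Q}^G,L)^{\SL(W)}=\mH^0(\ttilde{\mbf{Q}}^G,L)^{\GL(W)}$, and on $\ttilde{\mbf{Q}}^G$ the pullback of $\cO_{\bP^N}(1)$ is canonically trivial; then Proposition \ref{lbidentities} and the codimension-two extension over $Y_{\cL^n}=[\ttilde{\mbf{Q}}^G/\GL(W)]$ finish the proof. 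Your closing remarks about matching central characters are the right instinct, but the bookkeeping should be carried out on $\ttilde{\mbf{Q}}^G$, where the trivializations are canonical.
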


The proof of the proposition is as follows. In lemma \ref{qgsm} below, we show $\mbf{Q}^G$ is smooth, so that it is an open subscheme of $\mbf{M}$, hence restriction gives a map
$$\mH^0(\mbf{M},L)^{\SL(W)}\hookrightarrow\mH^0(\mbf{Q}^G,L)^{\SL(W)},$$
where $\ttilde{\mbf{Q}}^G$ is the preimage of $\mbf{Q}^G$ in $\ttilde{\mbf{Q}}$. Since $\mbf{Q}^G=\ttilde{\mbf{Q}}^G/\bG_m$ and the center of $\SL(W)$ acts trivially on sections of $L$ under the assumptions of the proposition, we have
$$\mH^0(\mbf{Q}^G,L)^{\SL(W)}=\mH^0(\mbf{Q}^G,L)^{\PGL(W)}=\mH^0(\ttilde{\mbf{Q}}^G,L)^{\GL(W)}.$$
The pullback of $\cO_{\bP^N}(1)$ to $\ttilde{\mbf{Q}}^G$ is trivial, so by proposition \ref{lbidentities}
$$\mH^0(\ttilde{\mbf{Q}}^G,L)^{\GL(W)}=\mH^0(\ttilde{\mbf{Q}}^G,\cD^l)^{\GL(W)}.$$
Note that $\ttilde{\mbf{Q}}^G$ is a $\GL(W)$-bundle over an open substack 
$$Y_{\cL^n}=[\ttilde{\mbf{Q}}^G/\GL(W)]\subset\Bun_G(C_0)$$
parametrizing $G$-bundles $E$ such that the associated vector bundle $\cE=E\times^GV$ has $\cE\otimes\cL^n$ globally generated and $h^1(\cE\otimes\cL^n)=0$. If $n$ is sufficiently large, then $Y_{\cL^n}$ contains the locus of semistable $G$-bundles, so its complement has codimension $\geq 2$ (see \cite{laszlo-sorger}, proof of 1.6, or \cite{laumon-rapoport}, section 3), thus
$$\mH^0(\ttilde{\mbf{Q}}^G,\cD^l)^{\GL(W)}=\mH^0(Y_{\cL^n},\cD^l)=\mH^0(\Bun_G(C_0),\cD^l).$$
It remains to show $\mbf{Q}^G$ is smooth.

\begin{lemma}\label{qgsm} $\mbf{Q}^G$ is a smooth, irreducible variety.\end{lemma}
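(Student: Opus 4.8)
The plan is to show that $\mbf{Q}^G$ is smooth by presenting it as a fiber bundle (or at least a smooth morphism target) over the smooth stack $\Bun_G(C_0)$, and to get irreducibility from the fact that $\Bun_G(C_0)$ is smooth and connected. Concretely, $\mbf{Q}^G\subseteq\mbf{Q}^0$ is the locus where the singular $G$-bundle is an honest $G$-bundle in the strict sense, i.e. $\hhat\tau$ lands in $\ul{\Isom}(\cE,V^*\otimes\cO_{C_0})/G$ everywhere. Over this locus, the data $(q,\cM,\tau)$ reduces: since $\cE$ is then a vector bundle with trivial determinant of the form $E\times^GV$ and $\tau$ comes from a genuine reduction of structure group, the pair $(\cE,\tau)$ is literally a $G$-bundle $E$, together with the extra quot-scheme framing datum $q:W\otimes\cL^{-n}\to\cE$ with $W\xrightarrow{\sim}\mH^0(\cE\otimes\cL^n)$. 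The line bundle $\cM$ and the homogenization are forced (trivial) on the locus where $\tau$ is an honest reduction, so there is nothing projective happening: $\mbf{Q}^G$ is, up to the $\GL(W)$-framing, just the open substack $Y_{\cL^n}\subseteq\Bun_G(C_0)$ of $G$-bundles with $\cE\otimes\cL^n$ globally generated and $h^1=0$.

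The key steps, in order, are: (1) identify $\mbf{Q}^G$ set-theoretically with the framing data described above — an honest singular $G$-bundle that is an actual $G$-bundle together with an isomorphism $W\xrightarrow{\sim}\mH^0(\cE\otimes\cL^n)$ (this uses the constructions in section \ref{spbintro} turning such $(\cE,\tau)$ back into a $G$-bundle $E$, and the fact that for $\cE\in Q^0$ the map $W\to\mH^0(\cE\otimes\cL^n)$ is an isomorphism by definition of $\mbf{Q}^0$). (2) Observe that this exhibits $\mbf{Q}^G$ as a $\GL(W)$-torsor over the open substack $Y_{\cL^n}\subseteq\Bun_G(C_0)$ — this is exactly the identification $\ttilde{\mbf{Q}}^G\to Y_{\cL^n}$ (a $\GL(W)$-bundle) used just above the lemma, combined with $\mbf{Q}^G=\ttilde{\mbf{Q}}^G/\bG_m$, so $\mbf{Q}^G\to Y_{\cL^n}$ is a $\PGL(W)$-torsor; in any case the map is smooth and surjective. (3) Conclude smoothness: $\Bun_G(C_0)$ is smooth (\cite{belfakh}, Proposition 5.1, cited in section \ref{partioverview}), hence the open substack $Y_{\cL^n}$ is smooth, hence its $\PGL(W)$-torsor $\mbf{Q}^G$ is a smooth scheme. (4) Conclude irreducibility: $\Bun_G(C_0)$ is connected (also \cite{belfakh}, Proposition 5.1) and smooth, hence irreducible; an open substack $Y_{\cL^n}$ is irreducible (it is nonempty, containing all semistable bundles for $n$ large); and a torsor over an irreducible base under the connected group $\PGL(W)$ is irreducible, so $\mbf{Q}^G$ is irreducible.

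The main obstacle is step (1): making precise the claim that on $\mbf{Q}^G$ the extra projective data $(\cM,\tau)$ carries no moduli, so that the projective parameter space $\mbf{Q}$ degenerates to (a framing of) the stack $\Bun_G(C_0)$ over this open locus. One must check that if $(\cE,\tau)$ is a genuine $G$-bundle then the homogenization/Reynolds construction of section \ref{associatedtensorfield} produces $\cM$ trivial and $\tau$ determined by the reduction of structure group — equivalently, that the rational map $\ttilde{\mbf{Q}}\dashrightarrow\mbf{Q}$ of Proposition \ref{mbfq} restricts to an isomorphism over the $G$-bundle locus, identifying $\mbf{Q}^G$ with $\ttilde{\mbf{Q}}^G/\bG_m$. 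This is essentially bookkeeping with the definitions of $\ttilde{\mbf{Q}}$, $\mbf{Q}$, $\mbf{Q}^0$, and $\mbf{Q}^G$, together with Proposition \ref{bhisos}-style rigidity of the reduction $\hhat\tau$ for honest $G$-bundles, but it is where all the care is needed; once it is in place, steps (2)--(4) are formal consequences of smoothness and connectedness of $\Bun_G(C_0)$ and the torsor structure.
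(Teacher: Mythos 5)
Your proposal is correct and takes essentially the same approach as the paper: both arguments rest on the $\GL(W)$-bundle $\ttilde{\mbf{Q}}^G\to Y_{\cL^n}\subset\Bun_G(C_0)$ together with the smoothness and connectedness of $\Bun_G(C_0)$ (Belkale--Fakhruddin). The paper simply proves the statement for $\ttilde{\mbf{Q}}^G$ and descends along the smooth surjection $\ttilde{\mbf{Q}}^G\to\mbf{Q}^G$, so the identification $\mbf{Q}^G=\ttilde{\mbf{Q}}^G/\bG_m$ that you single out as the delicate point in step (1) is already supplied by proposition \ref{mbfq}.
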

\begin{proof} Since $\ttilde{\mbf{Q}}^G\to\mbf{Q}^G$ is smooth, it suffices to prove the lemma for $\ttilde{\mbf{Q}}^G$. But $\ttilde{\mbf{Q}}^G$ is a $\GL(W)$-bundle over the open substack $Y_{\cL^n}\subset\Bun_G(C_0)$ (\cite{wang}, section 4), and $\Bun_G(C_0)$ is smooth over $\Spec\CC$ and connected (\cite{belfakh}, proposition 5.1), so $\ttilde{\mbf{Q}}^G$ is smooth and irreducible.\end{proof}

\subsection{Conclusion of finite generation}

Using theorem \ref{sectionextensiontheorem}, we will show that the injection we have set up is an isomorphism and conclude finite generation of $\bigoplus_{k\geq 0}\mH^0(\Bun_G(C_0),\cD^k)$.

\begin{theorem}\label{isodkl} Let $G$ be a type A or C simple, simply connected Lie group, with the representation $V$ chosen as in proposition \ref{nopole}. Let $L=L_m(k_1,k_2)$, and assume $l,n,k_1,k_2$ are as in proposition \ref{injprop}. Then we have an algebra isomorphism
$$\bigoplus_{k\geq 0}\mH^0(\mbf{M},L^k)^{\SL(W)}\xrightarrow\sim \bigoplus_{k\geq 0}\mH^0(\Bun_G(C_0),\cD^{kl}).$$\end{theorem}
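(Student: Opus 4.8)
The plan is to upgrade the injection $\mH^0(\mbf{M},L^k)^{\SL(W)}\hookrightarrow\mH^0(\Bun_G(C_0),\cD^{kl})$ of Proposition \ref{injprop} (applied with $L$ replaced by $L^k$) to an isomorphism by exhibiting a two-sided inverse, using Theorem \ref{sectionextensiontheorem} to extend sections from $\Bun_G(C_0)$ to a normalized moduli space. First I would recall, following the chain of identifications in Section \ref{injsections}, that the target is $\mH^0(\ttilde{\mbf{Q}}^G,\cD^{kl})^{\GL(W)}$, and that $\ttilde{\mbf{Q}}^G\subset\ttilde{\mbf{Q}}$ is an open subscheme with $\mbf{Q}^G$ sitting inside the normalization $\mbf{M}$ as a dense open (by Lemma \ref{qgsm}, $\mbf{Q}^G$ is smooth, so it injects into $\mbf{M}$, and it is dense since $\mbf{M}$ is the normalization of $\bbar{\mbf{Q}^G}$). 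So the content is: any $\GL(W)$-invariant section of $\cD^{kl}$ over $\ttilde{\mbf{Q}}^G$ extends (uniquely) to a $\GL(W)$-invariant section of $L^k$ over the normalization $\mbf{M}$.

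The key step is to transport Theorem \ref{sectionextensiontheorem} from the stack side to the scheme side. On the stack side, Theorem \ref{sectionextensiontheorem} says $\mH^0(\cY,\cD^{kl})\xrightarrow{\sim}\mH^0(\Bun_G(C_0),\cD^{kl})$ where $\cY$ is the normalization of $\SB_G^0(C_0)$. Now $\mbf{M}^0$ (the preimage of $\mbf{Q}^0$ in $\mbf{M}$) is, up to the $\GL(W)$-action, a rigidified atlas for the normalization of $\SB_G^0(C_0)$: indeed $[\mbf{Q}^0/\SL(W)]$ is an open substack of $\SB_G^0(C_0)$ containing the semistable locus, so (as in the codimension-$\geq 2$ argument of Section \ref{injsections}) its complement in $\SB_G^0(C_0)$ has codimension $\geq 2$ by Theorem \ref{ss} together with the properness of the moduli space, and normalization commutes with taking this big open substack. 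Hence a section over $\ttilde{\mbf{Q}}^G\subset\ttilde{\mbf{Q}}^0$ is the same data as a section of $\cD^{kl}$ over an open substack of $\cY$ whose complement has codimension $\geq 2$; by normality of $\cY$ it extends uniquely to all of $\cY$, and then Theorem \ref{sectionextensiontheorem} identifies it with a section over $\Bun_G(C_0)$. Conversely a section over $\Bun_G(C_0)$ restricts to $\ttilde{\mbf{Q}}^G$. This gives the bijection $\mH^0(\ttilde{\mbf{Q}}^0,\cD^{kl})^{\GL(W)}=\mH^0(\Bun_G(C_0),\cD^{kl})$. Combined with Proposition \ref{lbidentities} (to convert $\cD^{kl}$ on $\ttilde{\mbf{Q}}^0$ back to a power of $\cO_Q$, hence to $L^k$ after accounting for the $\bP^N$-factor, which is trivial on $\ttilde{\mbf{Q}}$) and with the fact that $\mbf{M}$ is normal so sections of $L^k$ extend over the codimension $\geq 2$ complement $\mbf{M}\setminus\mbf{M}^0$ — here one uses that $L$ is $\SL(W)$-linearized and ample, and that $\mbf{M}^0$ contains the GIT-semistable locus — we get $\mH^0(\mbf{M},L^k)^{\SL(W)}=\mH^0(\mbf{M}^0,L^k)^{\SL(W)}=\mH^0(\Bun_G(C_0),\cD^{kl})$.

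To finish, I would check that these isomorphisms are compatible with multiplication, i.e. that they assemble into an algebra isomorphism $\bigoplus_k\mH^0(\mbf{M},L^k)^{\SL(W)}\xrightarrow{\sim}\bigoplus_k\mH^0(\Bun_G(C_0),\cD^{kl})$: this is automatic because every map in the chain (restriction to an open substack, pullback along $\ttilde{\mbf{Q}}^G\to Y_{\cL^n}$, the line-bundle identification of Proposition \ref{lbidentities}, and the extension across codimension $\geq 2$) is a ring homomorphism in each degree and respects the grading, the only point to note being that the isomorphism $\cO_Q(P(n))\cong\cD^l$ is compatible with tensor powers, which it is since it is an isomorphism of line bundles. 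The main obstacle I anticipate is the normalization-commutes-with-open-immersion and codimension bookkeeping: one must make sure that $\mbf{Q}^0$ (resp. its stack quotient) really is a big enough open inside $\bbar{\mbf{Q}^G}$ (resp. inside $\SB_G^0(C_0)$) that the normalizations agree on the relevant locus, and that the $G$-bundle locus $\mbf{Q}^G$ is dense in $\mbf{Q}^0$ — this uses that $\Bun_G(C_0)$ is dense in $\SB_G^0(C_0)$, which was recorded in Section \ref{partioverview}. Everything else is a matter of chasing the identifications already established in Sections \ref{surjection}, \ref{linebundleidentities}, and \ref{injsections}.
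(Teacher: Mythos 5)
Your skeleton matches the paper's proof (proposition \ref{injprop} for the injection, proposition \ref{lbidentities} to convert $L$ into $\cD^l$, theorem \ref{sectionextensiontheorem} for surjectivity, and an extension of invariant sections from $\mbf{M}^0$ to $\mbf{M}$), but two load-bearing justifications in your write-up are codimension-$\geq 2$ claims that are neither established nor true in the needed generality. First, for the passage from $\mbf{M}^0$ to $\mbf{M}$ you assert that $\mbf{M}\setminus\mbf{M}^0$ has codimension $\geq 2$ ``because $\mbf{M}^0$ contains the GIT-semistable locus.'' Containing the semistable locus gives no control on the codimension of the complement (the non-honest locus and the locus where $W\to\mH^0(\cE\otimes\cL^n)$ degenerates may well be divisorial). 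The correct mechanism is GIT, not normality: restriction $\mH^0(\mbf{M},L^k)^{\SL(W)}\to\mH^0(U,L^k)^{\SL(W)}$ is an isomorphism for any invariant open $U$ containing the semistable locus (\cite{theta1}, lemma 4.15), and the $L$-semistable locus lies in $\mbf{M}^0$ because $\mbf{M}\to\bbar{\mbf{Q}^G}$ is finite and by theorems \ref{ss} and \ref{sch-mc}. Second, and more seriously, the sentence in which an invariant section over $\ttilde{\mbf{Q}}^G$ becomes ``a section over an open substack of $\cY$ whose complement has codimension $\geq 2$'' which ``by normality of $\cY$ extends uniquely to all of $\cY$'' cannot stand: $\ttilde{\mbf{Q}}^G$ corresponds to an open substack of $\Bun_G(C_0)$, and the boundary $\cY\setminus\Bun_G(C_0)$ is divisorial (in the Bhosle model it is the divisor $\det q_1=0$ of example \ref{polebd}), so no normality/codimension argument can supply that extension -- it is exactly the content of theorem \ref{sectionextensiontheorem} and the pole computation, and it genuinely fails for types B and D. Similarly, the claim that the complement of $[\mbf{Q}^0/\SL(W)]$ in $\SB_G^0(C_0)$ has codimension $\geq 2$ is unsupported -- and fortunately unnecessary.

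The repair keeps your ingredients but changes the mechanism. Either argue as the paper does: since $\mbf{M}$ is normal, it suffices to show sections of $L^k$ over $\mbf{Q}^G$ acquire no pole along maps $\Spec k[[t]]\to\mbf{Q}^0\cap\bbar{\mbf{Q}^G}$ sending the generic point into $\mbf{Q}^G$; such a map lifts to $\ttilde{\mbf{Q}}^0$, where $L$ becomes $\cD^l$ by proposition \ref{lbidentities}, and induces $\Spec k[[t]]\to\SB_G^0(C_0)$ with generic point in $\Bun_G(C_0)$, which is handled by theorem \ref{sectionextensiontheorem}. Or salvage your atlas picture: extend the given section over $\Bun_G(C_0)$ to $\cY$ by theorem \ref{sectionextensiontheorem}, then merely \emph{restrict} (no extension across any complement is required) to the open substack lying over $[(\ttilde{\mbf{Q}}^0\cap\bbar{\ttilde{\mbf{Q}}^G})/\GL(W)]$; since normalization is compatible with the smooth maps involved, its $\GL(W)$-cover is the preimage of $\mbf{M}^0$ in $\mbf{M}$ -- note this is the normalization, not the non-normal $\ttilde{\mbf{Q}}^0$ appearing in your displayed bijection $\mH^0(\ttilde{\mbf{Q}}^0,\cD^{kl})^{\GL(W)}=\mH^0(\Bun_G(C_0),\cD^{kl})$, which theorem \ref{sectionextensiontheorem} does not give and which is not what the statement requires.
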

\begin{proof} As $\mbf{M}\to\bbar{\mbf{Q}}^G$ is finite, the $L$-semistable locus in $\mbf{M}$ is the preimage of that in $\mbf{Q}$, in particular it is contained in $\mbf{M}^0$ by theorem \ref{sch-mc}. Hence,
$$\mH^0(\mbf{M},L^k)^{\SL(W)}=\mH^0(\mbf{M}^0,L^k)^{\SL(W)}$$
by \cite{theta1}, lemma 4.15. Since sections of $\cD^{kl}$ on $\Bun_G(C_0)$ are identified with sections of $L^k$ on $\mbf{Q}^G$, and since $\mbf{M}$ is normal, we need to show that sections of $L^k$ over $\mbf{Q}^G$ have no pole at $t=0$ for a map $\Spec k[[t]]\to\mbf{Q}^0\cap\bbar{\mbf{Q}^G}$ sending the generic point into $\mbf{Q}^G$. As $k[[t]]$ is local, any such map factors through $\ttilde{\mbf{Q}}$, and the pullback of $L$ to $\ttilde{\mbf{Q}}^0$ is $\cD^l$ by proposition \ref{lbidentities} (where $\ttilde{\mbf{Q}}^0\subseteq\ttilde{\mbf{Q}}$ is the preimage of $\mbf{Q}^0$),  so we reduce to showing that sections of $\cD^l$ over $\Bun_G(C_0)$ have no pole at $t=0$ for a map $\Spec k[[t]]\to\SB_G^0(C_0)$ sending the generic point into $\Bun_G(C_0)$. This is what is shown in theorem \ref{sectionextensiontheorem}.\end{proof}

\begin{corollary}\label{finitegeneration} Let $G$ be a type A or C simple, simply connected Lie group. For any stable curve $C_0$ of genus $g\geq 2$, the algebra $\bigoplus_{k\geq 0}\mH^0(\Bun_G(C_0),\cD^k)$ is finitely generated.\end{corollary}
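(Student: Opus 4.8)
The plan is to obtain Corollary \ref{finitegeneration} as a formal consequence of Theorem \ref{isodkl}. Set $R = \bigoplus_{k\ge 0}\mH^0(\Bun_G(C_0),\cD^k)$. Since $\Bun_G(C_0)$ is integral, $R$ is a graded domain with $R_0 = \CC$, so it suffices to show that the $l$-th Veronese subalgebra $R^{(l)} = \bigoplus_{k\ge0}\mH^0(\Bun_G(C_0),\cD^{kl})$ is finitely generated and then to deduce finite generation of $R$ from this.

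For the first point I would argue as follows. By Theorem \ref{isodkl} there is a graded algebra isomorphism $R^{(l)}\cong\bigoplus_{k\ge0}\mH^0(\mbf M,L^k)^{\SL(W)}$, where $L = L_m(k_1,k_2)$ and $l$ is as in Proposition \ref{injprop}. The scheme $\mbf M$ is the normalization of the closed subscheme $\bbar{\mbf Q^G}$ of the projective $Q$-scheme $\mbf Q$, so it is a projective variety (irreducible, by Lemma \ref{qgsm}). The line bundle $L$ is the pullback of $\cO(k_1)\boxtimes\cO(k_2)$ on $\Gr(W\otimes\mH^0(\cL^m),f)\times\bP^N$ along the closed immersion \eqref{embQ'} followed by the finite normalization morphism, hence $L$ is ample on $\mbf M$. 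Therefore $\bigoplus_{k\ge0}\mH^0(\mbf M,L^k)$ is a finitely generated $\CC$-algebra, and since $\SL(W)$ is linearly reductive, Nagata's theorem gives that its subalgebra of invariants — and hence $R^{(l)}$ — is finitely generated, in particular Noetherian.

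For the second point, write $R^{(i,l)} = \bigoplus_{m\ge0}R_{ml+i}$ for $0\le i\le l-1$, so that $R = \bigoplus_{i=0}^{l-1}R^{(i,l)}$ as $R^{(l)}$-modules; it is enough to see each $R^{(i,l)}$ is a finitely generated $R^{(l)}$-module. If $R^{(i,l)}\ne0$, choose a nonzero homogeneous $f\in R_c$ of positive degree with $c\equiv i\pmod l$; then multiplication by $f^{\,l-1}$ is an $R^{(l)}$-linear map $R^{(i,l)}\to R^{(l)}$, injective because $R$ is a domain, so $R^{(i,l)}$ is isomorphic to an $R^{(l)}$-submodule of $R^{(l)}$ and is finitely generated because $R^{(l)}$ is Noetherian. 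Thus $R$ is a finitely generated module over the finitely generated $\CC$-algebra $R^{(l)}$, and so $R$ is itself a finitely generated $\CC$-algebra.

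I do not expect a genuine obstacle at this stage: essentially all of the work has gone into the section-extension Theorem \ref{sectionextensiontheorem} (the lift to descending $G$-bundles of Proposition \ref{lift} and the pole estimate of Propositions \ref{pole} and \ref{nopole}) and into the line bundle identifications of Proposition \ref{lbidentities} and Theorem \ref{isodkl}, after which this corollary is purely formal. The only two points warranting a moment's care are the ampleness of $L$ on the normalization $\mbf M$ — which holds because the pullback of an ample bundle along a finite morphism is ample — and the observation that the passage from the Veronese $R^{(l)}$ back to $R$ needs no hypothesis on $R$ beyond its being an integral domain.
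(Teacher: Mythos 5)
Your proposal is correct and follows essentially the same route as the paper: Theorem \ref{isodkl} identifies the $l$-th Veronese of $\bigoplus_k\mH^0(\Bun_G(C_0),\cD^k)$ with the $\SL(W)$-invariant section ring of the ample linearized bundle $L$ on the projective variety $\mbf{M}$, which is finitely generated by reductivity, and one then passes from the Veronese back to the full ring. The only difference is that where the paper cites \cite{belgib}, lemma 8.4, for this last step, you give a self-contained argument via the decomposition $R=\bigoplus_{i=0}^{l-1}R^{(i,l)}$ and multiplication by $f^{l-1}$, using that $R$ is a domain (valid since $\Bun_G(C_0)$ is smooth and connected), which is a correct and harmless substitution.
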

\begin{proof} Since $\bigoplus_{k\geq 0}\mH^0(\Bun_G(C_0),\cD^{kl})\cong\bigoplus_{k\geq 0}\mH^0(\mbf{M},L^k)$ is finitely generated, this follows from \cite{belgib}, lemma 8.4.\end{proof}

\section{Compactifications over $\bbar{\cM}_g$ and conformal blocks}\label{conformalblocks}

\subsection{Conformal blocks vector bundles}

Let $\bbar{\cM}_{g,n}$ be the stack of stable $n$-pointed curves of genus $g$, and let $\mf{g}$ be the Lie algebra of the simple, simply connected group $G$. For a positive integer $l$ and dominant integral weights $\vec{\lambda}=(\lambda^1,\dots,\lambda^n)$ of level $\leq l$, there is a vector bundle $\mbb{V}_{\mf{g},l,\vec{\lambda}}$ on $\bbar{\cM}_{g,n}$ called the vector bundle of conformal blocks.

Let us briefly recall its construction from \cite{fakh12}. Let $\hhat{\mf{g}}=(\mf{g}\otimes k((t)))\oplus k\cdot c$ be the affine Lie algebra of $\mf{g}$, where $c$ is central and the Lie bracket is defined by
$$[x\otimes f,y\otimes g]=[x,y]\otimes fg+(x,y)\mrm{Res}(f'g)c.$$
For each dominant integral weight $\lambda$ of $\mf{g}$ there is an irreducible representation $\cH_\lambda$ of $\hhat{g}$, and we put $\cH_{\vec{\lambda}}=\bigotimes_{i=1}^n\cH_{\lambda^i}$, which is an irreducible representation of
$$\hhat{\mf{g}}_n=(\mf{g}\otimes k((t))^{\oplus n})\oplus k\cdot c.$$ 
Suppose $\pi:\scr{C}\to S$ is a proper flat family of genus $g$ nodal curves parametrized by a smooth $k$-variety $S$, and let $\vec{p}=(p_1,\dots,p_n)$ be disjoint sections $p_i:S\to \scr{C}$ of $\pi$ whose images lie in the smooth locus of $\pi$. Assume that $S=\Spec A$ is affine, that $\scr{C}-(\bigcup_{i=1}^np_i(S))$ is affine with coordinate ring $B$, and that we are given isomorphisms $\eta_i:\hhat{\cO}_{\scr{C},p_i(S)}\xrightarrow\sim A[[t]].$ Then the $\eta_i$ make $\mf{g}\otimes_kB$ a Lie subalgebra of $\hhat{\mf{g}}_n\otimes_kA$, and we define
$$\mbb{V}_{\mf{g},l,\vec{\lambda}}(\scr{C},\vec{p})=\cH_{\vec{\lambda}}\otimes_kA/(\mf{g}\otimes_kB)\cdot(\cH_{\vec{\lambda}}\otimes_kA).$$
The case of a general $S$ can be dealt with by a descent argument (\cite{fakh12}, proposition 2.1, and the discussion following it).

For each conformal blocks bundle $\mbb{V}_{\mf{g},l,\vec{\lambda}}$, the sum $\bigoplus_{m\geq 0}\mbb{V}_{\mf{g},ml,m\vec{\lambda}}^\vee$ has a natural structure of a flat sheaf of algebras on $\bbar{\cM}_{g,n}$ (\cite{manon}). For a marked curve $(C_0,\vec{p})$, let $\Parbun_G(C_0,\vec{p})$ be the stack parametrizing $(E,s_1,\dots,s_n)$ consisting of a $G$-bundle $E\to C_0$ and points $s_i\in E_{p_i}/B$ for a fixed Borel subgroup $B\subset G$. Let $p:\Parbun_{G,g,n}\to\bbar{\cM}_{g,n}$ be the corresponding relative stack. In \cite{belfakh}, it is shown that for each conformal block $\mbb{V}_{\mf{g},l,\vec{\lambda}}$, there is a line bundle $\cL_{G,l,\vec{\lambda}}$ on $\Parbun_{G,g,n}$ with $p_*\cL_{G,l,\vec{\lambda}}\cong\mbb{V}_{\mf{g},l,\vec{\lambda}}^\vee$ (such isomorphisms exist for any family of stable curves, not just globally over $\bbar{\cM}_{g,n}$), and by \cite{belgib} theorem 9.2 this induces an algebra isomorphism
$$\bigoplus_{m\geq 0}\mH^0(\Parbun_G(C_0,\vec{p}),\cL_{G,l,\vec{\lambda}}(C_0,\vec{p})^m)\cong\bigoplus_{m\geq 0}\mbb{V}_{\mf{g},ml,m\vec{\lambda}}^\vee(C_0,\vec{p})$$
for all $(C_0,\vec{p})\in\bbar{\cM}_{g,n}$. The line bundle $\cL_{G,l,\vec{\lambda}}$ is related to determinant of cohomology in the following way. For a representation $V$ of $G$, let $\cN_{V,l,\vec{\lambda}}$ be the line bundle on $\Parbun_{G,g,n}$ whose fiber over a parabolic bundle $(E,s_1,\dots,s_n)\in\Parbun_G(C_0,\vec{p})$ is the tensor product of
\begin{itemize}
\item $[\det H^*(E\times^GV)\otimes\det H^*(V\otimes\cO_{C_0})^{-1}]^l,$
\item the fibers of the line bundles $E_{p_i}\times^B\CC_{-\lambda^i}\to E_{p_i}/B$ over the elements $s_i$.
\end{itemize}
If $V$ is irreducible, then $\cN_{V,l,\vec{\lambda}}\cong\cL_{G,d_Vl,\vec{\lambda}}$, where $d_V$ is the Dynkin index of $V$ \cite{belfakh}. In the case of no marked points ($n=0$), then even for reducible $V$ we have $\cN_{V,l}\cong\cL_{G,dl}$, where $d$ is the sum of the Dynkin indices of the irreducible summands of $V$, and thus
$$\bigoplus_{m\geq 0}\mH^0(\Bun_G(C_0),\cD(V)^{m})\cong\bigoplus_{m\geq 0}\mbb{V}_{\mf{g},md}^\vee(C_0)$$
for any $V$. Therefore, when $\mf{g}$ is $\mf{sl}(n)$ or $\mf{sp}(2n)$, corollary \ref{finitegeneration} and \cite{belgib} lemma 8.4 give finite generation of the conformal blocks algebra for any stable curve $C_0\in\bbar{\cM}_g$.

\subsection{Finite generation of the sheaf of conformal blocks algebras}

We would like to show that $\cA=\bigoplus_{m\geq 0}\mbb{V}_{\mf{g},m}^\vee$ is finitely generated as a sheaf of algebras on $\bbar{\cM}_g$. Recall that $\bbar{\cM}_g$ has a smooth atlas $H_g\to\bbar{\cM}_g$, where $H_g$ is a smooth, irreducible variety \cite{dm}. To show that $\cA$ is finitely generated, it suffices to show that there is a uniform constant $d$ such that, for each closed point $C_0\in\bbar{\cM}_g$, the fiber $\cA(C_0)$ is generated in degree $\leq d$ (for then $\cA$ is generated in degree $\leq d$ by Nakayama's lemma). We will prove this by showing that, for any family of stable curves $T\to\bbar{\cM}_g$ parametrized by a variety $T$, there is a constant $d=d(T)$ such that $\cA|_t$ is generated in degree $\leq d$ for all $t\in T$. The proof is by induction on $\dim T$, so that we are free to replace $T$ by a dense open subset.

Let me sketch the adjustments needed for the relative setting. Let $\pi:\scr{C}\to T$ be a family of stable curves parametrized by a variety $T$ (we can assume $T$ is smooth and irreducible), and fix a relatively ample line bundle $\cL$ on $\scr{C}/T$. All of the parameter schemes from section \ref{part2} have relative versions:
\begin{itemize}
\item $\ttilde{\mbf{Q}}_{\scr{C}/T}$ and $\mbf{Q}_{\scr{C}/T}$ are defined in the same way, but over a relative quot scheme 
$$Q_{\scr{C}/T}=\Quot_{\scr{C}/T}(W\otimes\cL^{-n},P);$$
\item $\ttilde{\mbf{Q}}_{\scr{C}/T}$ and $\mbf{Q}_{\scr{C}/T}$ again come with closed embeddings into $Q_{\scr{C}/T}\times_T\mA^{N+1}_T$ and $Q_{\scr{C}/T}\times_T\bP^N_T$;
\item there is a Grothendieck embedding $\mbf{Q}_{\scr{C}/T}\hookrightarrow\Gr\times_T\bP^N_T$, where 
$$\Gr=\Gr_T(\pi_*(W\otimes\cL^m),f)$$
is a relative Grassmannian, and we define $L=L_m(k_1,k_2)$ to be the relatively ample line bundle $\cO_{\Gr}(k_1)\boxtimes\cO_{\bP^N_T}(k_2)$ on $\mbf{Q}_{\scr{C}/T}$.
\end{itemize}

Define $\delta$-semistability in the relative case as fiberwise $\delta$-semistability over $T$. The number $\delta_0$ from theorem \ref{sch-mc} can be chosen uniformly over $T$ by \cite{mcthesis}, theorems 4.4.17 and 4.4.18 ($\delta_0$ depends ``only on numerical inputs," the only one of which that depends on the base curve is the degree of the polarization, but we can just use the canonical polarization). By \cite{mcuniformity}, theorem 2.16, the numbers $m,n$ from theorem \ref{ss} can be chosen uniformly over $T$ (the resulting $\alpha$ depends only on $P,m,n,\delta$). Since $L$-semistability coincides with $L_t$-semistability on the fibers $\mbf{Q}_t$, $t\in T$ (\cite{simpson}, lemma 1.13), the analogue of theorem \ref{ss} holds in the relative setting.

The schemes $\mbf{Q}^0_{\scr{C}/T},\mbf{Q}^G_{\scr{C}/T},\mbf{M}_{\scr{C}/T}$ can all be defined in the analagous way -- the only hitch is that the restriction of $\mbf{M}_{\scr{C}/T}$ to a point $t\in T$ is not the same as $\mbf{M}_{\scr{C}_t}$ since normalization does not commute with base change. But we are free to replace $T$ by an open subset, so we only need this to work generically.

\begin{lemma}\emph{(\cite{belgib}, lemma 10.3)} Let $U\subseteq Y$ be schemes over a smooth variety $S$ over a field of characteristic zero. Assume $U\to S$ is smooth and the fibers over closed points are irreducible. Also assume $Y\to S$ is proper. Let $N$ be the normalization of the closure of $U$ in $Y$. Then there is a dense open subset $V\subseteq S$ such that the fibers of $N$ over the closed points of $V$ are normal irreducible varieties.\end{lemma}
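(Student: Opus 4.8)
The plan is to reduce the statement to the behaviour of the generic fibre of $N\to S$ and then spread out; we write $\eta$ for the generic point of $S$, and we may assume $U\to S$ is dominant since $U\to S$ is smooth hence open and we are free to shrink $S$. The first step is to check that $N$ is an integral, normal scheme, proper over $S$ and dominating $S$. Since $U\to S$ is smooth it is flat, so every irreducible component of $U$ dominates $S$, and these components correspond bijectively to the irreducible components of $U_\eta$. The locus of $s\in S$ over which $U_s$ is geometrically irreducible is constructible (\cite{EGA}, 9.7.7); assuming (as in our applications) that the ground field is algebraically closed, the hypothesis says this locus contains every closed point of $S$, and since the closed points are dense it contains a dense open, hence $\eta$. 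Thus $U_\eta$ is geometrically irreducible, in particular irreducible, so $U$ is irreducible; and $U$ is reduced, being smooth over a characteristic-zero field, so $U$ is integral. Hence its closure $\bar U$ in $Y$, with reduced structure, is integral and proper over $S$ (it is closed in the proper $S$-scheme $Y$), and the normalization $\nu\colon N\to\bar U$ is finite (characteristic zero), so $N$ is integral, normal, proper over $S$, and dominates $S$. Since $U$ is smooth, hence normal, $\nu$ is an isomorphism over $U$; in particular $K(N)=K(U)$.

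Next I would show that the generic fibre $N_\eta=N\times_S\Spec K(S)$ is geometrically normal and geometrically integral. It is a localization of the normal scheme $N$, hence normal, and it is of finite type over the perfect field $K(S)$, so it is even geometrically normal; it is integral, with function field $K(N)=K(U)=K(U_\eta)$. Now $U_\eta$ is smooth over $K(S)$, hence geometrically reduced, and geometrically irreducible by the argument above, so it is geometrically integral, which in characteristic zero means $K(S)$ is algebraically closed in $K(U_\eta)=K(N_\eta)$; therefore $N_\eta$ is geometrically integral.

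Finally I would spread out. By generic flatness there is a dense open $V_1\subseteq S$ over which $N\to S$ is flat and of finite presentation, and by the openness of the loci of geometrically normal and of geometrically integral fibres for a flat, finitely presented morphism (\cite{EGA}, 12.2.4) the set of $s\in V_1$ with $N_s$ geometrically normal and geometrically integral is open; it contains $\eta$ by the previous step, hence it is a dense open $V\subseteq S$, and for $s\in V$ the fibre $N_s$ is a normal irreducible variety. I expect the only real work to be in the passage from the closed fibres to the generic fibre and back: one must invoke the right constructibility and openness statements for fibrewise geometric properties (\cite{EGA}, \S 9 and \S 12), and when the ground field is not algebraically closed one should be careful about the distinction between irreducibility and geometric irreducibility of fibres (in that generality one still obtains normal irreducible fibres over a dense open, since irreducibility of fibres is also a constructible condition on the base).
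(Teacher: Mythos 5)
The paper gives no proof of this lemma at all: it is imported verbatim from \cite{belgib}, lemma 10.3, so there is no internal argument to compare yours against. Your proof is the natural generic-fibre/spreading-out argument, and it is correct in the setting where the lemma is actually applied (all schemes of finite type over $\CC$): flatness of $U\to S$ sends maximal points to the generic point, so every component of the regular scheme $U$ dominates $S$; constructibility of the locus of geometrically irreducible fibres together with the hypothesis on closed fibres puts $\eta$ in that locus, so $U_\eta$ is geometrically irreducible and $U$ is integral; then $\bar U$ is integral and proper over $S$, its normalization $N$ is finite over it, and $N_\eta$ is normal and integral with function field $K(U_\eta)$, hence geometrically normal and geometrically integral in characteristic zero; generic flatness and openness of these fibrewise conditions for the (proper, flat, finitely presented) morphism $N\to S$ then give the dense open $V$.

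Two caveats. First, your closing parenthetical claim that plain (non-geometric) irreducibility of fibres is a constructible condition on the base is false: for $\Spec\QQ[t,t^{-1}][x]/(x^2-t)\to\mathbb{A}^1_\QQ\setminus\{0\}$ the closed points with irreducible fibre are exactly those $a$ that are non-squares in their residue field, which is not a constructible set. So your argument, as written, genuinely uses that the ground field is algebraically closed (which is all the paper needs, since it works over $\CC$); the asserted extension to arbitrary characteristic-zero fields would require a different justification. Second, two citation nits: openness of the loci of geometrically normal and geometrically integral fibres requires properness of $N\to S$ (which you have established, but which your citation sentence omits -- for merely flat, finitely presented morphisms these loci need not be open in $S$), and the paper's \cite{EGA} entry is the volume EGA IV$_{2}$ (no.\ 24), whereas the results you quote from \S 9 and \S 12 are in EGA IV$_{3}$.
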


\noindent Thus, shrinking $T$, we can assume $\mbf{M}_{\scr{C}/T}|_t=\mbf{M}_{\scr{C}_t}$ for all $t\in T$. Let $\cB=(\bigoplus_{k\geq 0}q_*L^k)^{\SL(W)}$, where $q:\mbf{M}_{\scr{C}/T}\to T$ and $L=L_m(k_1,k_2)$ is the ample linearization on $\mbf{M}_{\scr{C}/T}$ (we may have to twist $L$ by a line bundle on $T$ in order for it to be ample, or we can just assume $T$ is affine). Replacing $L$ by a sufficiently large multiple, we have $\cB|_t=\bigoplus_{k\geq 0}\mH^0(\mbf{M}_{\scr{C}_t}, L_t^k)^{\SL(W)}$ for all $t\in T$. One may establish an injection of $\cB$ into a Veronese subalgebra $\cA^{(N)}=\bigoplus_{k\geq 0}\cA_{kN}$ as in section \ref{injsections}, and we know $\cB|_t\to\cA^{(N)}|_t$ is an isomorphism for all $t\in T$ by theorem \ref{isodkl}. Thus $\cB\cong\cA^{(N)}$. Since $\cB$ is a finitely generated $\cO_T$-algebra, so is $\cA$ by \cite{belgib} lemma 8.4.

\subsection{The family $\mf{X}\to\bbar{\cM}_g$ and modular interpretations}

Let $\cA$ be the conformal blocks algebra on $\bbar{\cM}_g$, and $\cX(C_0)=\mbf{M}\sslash_L\SL(W)$ the moduli space defined in subsection \ref{setupforfg}. By theorem \ref{isodkl},
$$\Proj \cA(C_0)\cong\cX(C_0)$$ 
for every closed point $C_0\in\bbar{\cM}_g$. The last section showed that $\cA$ is finitely generated, so taking $\mf{X}=\Proj\cA$ we have the following. Note that item 1. is by a theorem of Kumar-Narasimhan-Ramanathan, \cite{knr}.

\begin{theorem} Let $G$ be a simple Lie group of type A or C, and let $g\geq 2$. Then there is a flat, relatively projective family $\mf{X}\to\bbar{\cM}_g$ such that
\begin{enumerate}
\item the fiber over a smooth curve is Ramanathan's moduli space of semistable $G$-bundles;
\item the fiber over an arbitrary curve is a normalized moduli space of semistable honest singular $G$-bundles.
\end{enumerate}\end{theorem}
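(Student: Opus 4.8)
The plan is to take $\mf{X}=\Proj\cA$, where $\cA=\bigoplus_{m\geq 0}\mbb{V}_{\mf{g},m}^\vee$ is the conformal blocks sheaf of algebras on $\bbar{\cM}_g$, and to verify in turn that $\mf{X}\to\bbar{\cM}_g$ is relatively projective, that its fibers are as claimed, and that it is flat. Relative projectivity is immediate: by the previous subsection $\cA$ is a finitely generated sheaf of $\cO_{\bbar{\cM}_g}$-algebras with $\cA_0=\cO_{\bbar{\cM}_g}$, so a suitable Veronese subalgebra $\cA^{(d)}$ is generated in degree one by the locally free coherent sheaf $\cA_d$, and $\mf{X}=\Proj\cA=\Proj\cA^{(d)}$ is a closed substack of the projective bundle $\bP(\cA_d)$ over $\bbar{\cM}_g$, equipped with a relatively very ample $\cO_{\mf{X}}(1)$.

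For the fibers, I would use that $\Proj$ commutes with arbitrary base change and that each $\cA_m=\mbb{V}_{\mf{g},m}^\vee$ is a vector bundle on $\bbar{\cM}_g$, so its formation commutes with restriction to a point; hence the fiber of $\mf{X}$ over a closed point $C_0$ is $\Proj\cA(C_0)$ with $\cA(C_0)=\bigoplus_m\mbb{V}_{\mf{g},m}^\vee(C_0)$. By the identities between determinant of cohomology and conformal blocks line bundles recalled above, $\bigoplus_l\mH^0(\Bun_G(C_0),\cD(V)^l)$ is a Veronese subalgebra of $\cA(C_0)$ for any $V$, so $\Proj\cA(C_0)\cong\Proj\bigoplus_l\mH^0(\Bun_G(C_0),\cD(V)^l)$, independently of $V$. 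For $C_0$ smooth this is Ramanathan's moduli space $\mf{M}_G(C_0)$ by the theorem of Kumar--Narasimhan--Ramanathan \cite{knr}, which is item 1. For $C_0$ singular I would take $V$ as in section \ref{sectnotation} and invoke theorem \ref{isodkl}:
\[\Proj\cA(C_0)\cong\Proj\bigoplus_{k\geq 0}\mH^0(\Bun_G(C_0),\cD(V)^{kl})\cong\Proj\bigoplus_{k\geq 0}\mH^0(\mbf{M},L^k)^{\SL(W)}=\mbf{M}\sslash_L\SL(W)=\cX(C_0),\]
the normalized moduli space of section \ref{setupforfg}, which is item 2. Note that the fibers are normal and integral: $\cX(C_0)$ because it is a GIT quotient of $\mbf{M}$, which is normal by construction and irreducible since $\mbf{Q}^G$ is (lemma \ref{qgsm}); and the smooth-curve fiber classically.

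The step I expect to be the crux is flatness of $\mf{X}\to\bbar{\cM}_g$. I would verify it after base change along the smooth atlas $H_g\to\bbar{\cM}_g$, $H_g$ being smooth and hence integral, using the standard criterion that a closed subscheme of $\bP(\cA_d)$ over an integral Noetherian base is flat precisely when the fibrewise Hilbert polynomial $P_{C_0}(k)=\chi(\mf{X}_{C_0},\cO_{\mf{X}_{C_0}}(k))$ does not depend on $C_0$. For $k\gg 0$ one has $H^i(\mf{X}_{C_0},\cO(k))=0$ for $i>0$ by Serre vanishing and $\mH^0(\mf{X}_{C_0},\cO(k))=\cA_{dk}(C_0)$, which has dimension $\rk\mbb{V}_{\mf{g},dk}$; this rank is independent of $C_0$ because $\mbb{V}_{\mf{g},dk}$ is a vector bundle on the connected $\bbar{\cM}_g$, so $P_{C_0}$ is constant in the family and flatness follows. (Alternatively, all fibers are integral of dimension $\dim G\cdot(g-1)$ and $H_g$ is regular, so miracle flatness applies once one knows $\Proj\cA$ is Cohen--Macaulay; the Hilbert-polynomial argument avoids checking this.) Taking $\mf{X}=\Proj\cA$, the three steps together give the theorem.
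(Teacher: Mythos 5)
Your proposal is correct and follows essentially the same route as the paper: take $\mf{X}=\Proj\cA$, use the finite generation established in the preceding subsection for relative projectivity, and identify the fibers via theorem \ref{isodkl} (together with the determinant-of-cohomology/conformal-blocks identification) for arbitrary stable curves and via Kumar--Narasimhan--Ramanathan for smooth ones. The only difference is that you spell out flatness through constancy of the fibrewise Hilbert polynomial coming from the ranks of the conformal blocks bundles, a point the paper leaves implicit (it simply relies on the graded pieces $\mbb{V}_{\mf{g},m}^\vee$ being vector bundles on $\bbar{\cM}_g$).
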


\noindent Note that the theorem gives an interpretation of the fibers of $\Proj\cA\to\bbar{\cM}_g$ over fixed stable curves, but we have not given an interpretation over families of stable curves, since the varieties $\cX$ may not base change well in the relative setting.

\end{document}